\newtheorem{mythm}{Theorem}[section]
\newtheorem{mylemma}[mythm]{Lemma}
\newtheorem*{mythmintro}{Theorem}
\newtheorem{myptn}[mythm]{Proposition}
\theoremstyle{definition}
\newtheorem{mydef}[mythm]{Definition}
\newtheorem{mynotation}[mythm]{Notation}
\newtheorem{myremark}[mythm]{Remark}
\DeclareMathOperator{\Sym}{\mathrm{Sym}}
\DeclareMathOperator{\Hom}{\mathrm{Hom}} 
\DeclareMathOperator{\Spec}{\mathrm{Spec}} 
\DeclareMathOperator{\Res}{\mathrm{Res}} 
\DeclareMathOperator{\Pic}{\mathrm{Pic}} 
\DeclareMathOperator{\vol}{\mathrm{vol}}
\DeclareMathOperator{\pr}{\mathrm{pr}}
\DeclareMathOperator{\Supp}{\mathrm{Supp}}
\DeclareMathOperator{\res}{\mathrm{res}} 
\DeclareMathOperator{\rg}{\mathrm{rg}} 
\DeclareMathOperator{\ord}{\mathrm{ord}} 
\DeclareMathOperator{\ddiv}{\mathrm{div}}  
\newcommand{\KVar}{\mathrm{KVar}}
\newcommand{\KExpVar}{\mathrm{KExpVar}}
\newcommand{\ExpM}{\mathscr{E}\mathit{xp}\mathscr{M}}
\newcommand{\Cl}{\mathrm{Cl}}  
\newcommand{\aA}{\mathbf{a}}  
\newcommand{\m}{\mathsf{m}}  
\newcommand{\mface}{\mathsf{M}} 
\newcommand{\dd}{\mathrm{d}}  
\newcommand{\ee}{\mathrm{e}}  
\newcommand{\mm}{{\mathbf{m}}}  
\newcommand{\nn}{{\mathbf{n}}}  
\newcommand{\pp}{{\mathbf{p}}}  
\newcommand{\CC}{\mathbf{C}}
\newcommand{\PP}{\mathbf{P}}
\newcommand{\ZZ}{\mathbf{Z}}
\newcommand{\NN}{\mathbf{N}}
\newcommand{\TT}{\mathbf{T}}
\newcommand{\LL}{\mathbf{L}}
\newcommand{\AAA}{\mathscr{A}}
\newcommand{\BBB}{\mathscr{B}}
\newcommand{\DDD}{\mathscr{D}}
\newcommand{\EEE}{\mathscr{E}}
\newcommand{\FFF}{\mathscr{F}}
\newcommand{\GGG}{\mathscr{G}}
\newcommand{\HHH}{\mathscr{H}}
\newcommand{\LLL}{\mathscr{L}}
\newcommand{\OOO}{\mathscr{O}}
\newcommand{\PPP}{\mathscr{P}}
\newcommand{\UU}{\mathbf{U}}
\newcommand{\UUU}{\mathscr{U}}
\newcommand{\XXX}{\mathscr{X}}
\newcommand{\YYY}{\mathscr{Y}} 
\newcommand{\ZZZ}{\mathscr{Z}} 
\newcommand{\BU}{\mathscr{B}^\mathscr{U}}
\begin{document}

\title[Geometric Batyrev-Manin-Peyre \& compactifications of additive groups]{Geometric Batyrev-Manin-Peyre \\for equivariant compactifications \\of additive groups}

\author[L. Faisant]{Loïs \textsc{Faisant}}
\date{\today}
\address{Institut Fourier\\UMR 5582\\
Universit\'e Grenoble Alpes\\
CS 40700\\ 38058 Grenoble CEDEX 09\\ France}
\email{lois.faisant@univ-grenoble-alpes.fr}

\keywords{moduli space of curves, motivic Euler product, geometric Batyrev-Manin-Peyre conjecture}

\makeatletter
\@namedef{subjclassname@2020}{
 \textup{2020} Mathematics Subject Classification}
\makeatother
\subjclass[2020]{14H10, 14G10}



\maketitle


\begin{abstract}
Building on previous works by Bilu, Chambert-Loir and Loeser, we study 
the asymptotic behaviour of the moduli space of sections  of a given family over a smooth projective curve, 
assuming that the generic fiber is an equivariant compactification of a finite dimensional vector space. Working in a suitable Grothendieck ring of varieties,
we show that the class of these moduli spaces converges, modulo an adequate normalisation, to a non-zero effective element, 
when the class of the sections goes arbitrary far from the boundary of the dual of the effective cone.
The limit can be interpreted as a motivic Euler product in the sense of Bilu's thesis. This result provides a positive answer to a motivic version of the Batyrev-Manin-Peyre conjectures in this particular setting. 
\end{abstract}




\section*{Introduction}

The link between the structure of higher dimensional algebraic varieties and the rational curves on such varieties has been fruitfully exploited during the last decades \cite{kollar1996rational,debarre2001higher}.
In particular, this approach is relevant when one studies varieties admitting many rational curves, such as Fano varieties, by the work of Mori \cite{mori1979projective}. 
Covering families of curves are mobile and it is known that the cone of mobile curves is exactly the dual of the effective cone, by the theorem of Boucksom, Demailly, P\v{a}un and Peternell \cite{boucksom2012pseudo}.  
Starting from this remark, 
a new direction consists in studying the asymptotic behaviour of the moduli space of curves on a variety, 
when the class of the curves goes arbitrary far from the boundary of the dual of the effective cone. 
This approach finds another source of inspiration in 
the Batyrev-Manin-Peyre conjectures \cite{batyrev1990nombre,peyre1995hauteurs} which originally concern the estimation of the number of rational points of bounded height on an algebraic variety defined over a number field, when the bound tends to infinity. 
It is then natural to consider analogous questions over function fields, which amounts to studying the morphisms from a given projective curve $C$ to a given \textit{sufficiently nice} variety, or more generally the sections of a family over $C$. Roughly speaking, in this context the notion of height translates into the notion of degree of a curve,
and the height zeta function one may introduce in the arithmetic setting becomes a motivic height zeta function; its coefficients are the classes, in a Grothendieck ring of varieties, 
of the moduli spaces of sections of a given degree. The properties of such a generating series have been studied by Bourqui \cite{bourqui2009produit} in the case of toric varieties 
and by Chambert-Loir-Loeser \cite{chambert2016motivic} and Bilu \cite{bilu2018motivic} in the case of compactifications of vector spaces. 

In this paper we refine the study of the case of compactifications of vector spaces. More precisely, we study the asymptotic behaviour of the moduli space of sections of a family over a projective curve whose generic fibre is a {compactification} of a power of the additive group. 
One can attach to such a section a class in the dual of the Picard group of the generic fibre, called the \textit{multidegree} of the section, and consider the class of the moduli space of sections of a given multidegree in the Grothendieck ring of varieties. 
The main result of this article is the following:  if we normalise this last class by the class of the affine space of dimension the anticanonical degree of the curves, then it converges, when the multidegree tends to infinity \textit{in a proper way},  to an effective element of the Grothendieck ring of varieties. 
In comparison, previous results only provided information concerning the moduli space of sections with a given degree relatively to the generic log-anticanonical line bundle of the family. 
By writing \textit{in a proper way}, we mean that the multidegree goes arbitrary far from the boundary 
of the dual of the effective cone.
Furthermore,  the limit of the normalised classes of moduli spaces can be written as a motivic Euler product in the sense of Bilu's thesis \cite{bilu2018motivic}. 
It has the expected form in comparison with the predictions of the Batyrev-Manin-Peyre conjectures in the arithmetic context \cite{peyre1995hauteurs} as well as in the geometric (or motivic) context \cite{bourqui2009produit,bourqui2010fonctions}.  
Our result also deals with the case of sections whose restriction to a fixed open subset of the curve has its image contained in a partial compactification of the vector space,  
a condition which requires the introduction of additional finer numerical invariants in order to distinguish the relevant components of the moduli space.  

Since we will use properties of a certain motivic height zeta function mentioned above, we adopt the framework of the last chapter of \cite{bilu2018motivic} which generalises the situation studied by Chambert-Loir and Loeser in \cite{chambert2016motivic}.
We consider a quasi-projective smooth connected curve
$C_0$ defined over an algebraically closed field $k$  of characteristic zero and $C$ its smooth projective compactification, of genus $g$ and field of rational functions $F=k(C)$.  
The complement of $C_0$ in $C$ is a finite scheme $S$. 
We take $G$ to be the additive group scheme $ \mathbf G_a^n $ for a given positive integer $n$ and we consider
a projective irreducible $k$-scheme $\XXX$ together with a non-constant morphism $\pi : \mathscr X \to C$  satisfying the following assumptions. 
We assume that the generic fibre $X=\XXX_F$ is a smooth equivariant compactification of $G_F$, meaning that  $X$ is a smooth projective scheme over $F$ containing a dense open subset isomorphic to $G_F$, such that the group law of $G_F$ extends to an action of $G_F$ on $X$.
Let $\UUU$ be a Zariski open subset of $\XXX$. Similarly, $U=\UUU_F$ is assumed to be stable under the action of $G_F$.  
We denote by $D$ the complement of $U$  in $X$.
Our goal is to study the asymptotic behaviour 
of sections of such a family $\pi : \XXX \to C$,
in a sense we make more precise in what follows. 

By \cite[Theorem 2.7]{tschinkel1999geometry}, the boundary $X\setminus G_F$ is a divisor whose irreducible components $(D_\alpha)_{\alpha \in \AAA}$ freely generate the Picard group of $X$, as well as its effective cone. 
There exist integers ${ \rho_\alpha \geqslant 2 }$ such that an anticanonical divisor is given by $\sum_\alpha \rho_\alpha D_\alpha$; in particular, it is big. 
A log-anticanonical divisor with respect to $D$ is then $\sum_{\alpha \in \AAA} \rho_\alpha ' D_\alpha $ where ${ \rho_\alpha ' = \rho_\alpha  - 1 }$ if $D_\alpha$ is an irreducible component of $D$ and ${ \rho_\alpha ' = \rho_\alpha }$ otherwise. 
 
For every $\alpha $ in $ \AAA$, there is a line bundle $\mathscr L_\alpha$ on $\XXX$ extending $D_\alpha$.
Given a tuple of integers ${ \nn = (n_\alpha)_{\alpha \in \AAA} }$, the moduli space $M_\nn$ is the space parametrizing sections ${ \sigma : C \to \XXX }$ such that:
\begin{itemize}
	\item $\sigma$ maps the generic point $\eta_C$ of $C$ to a point of $G_F$;
	\item the image of $C_0$ by $\sigma$ is contained in $\UUU$;
	\item for all $\alpha $ in  $\AAA$, $\deg \left ( \sigma ^* \mathscr L_\alpha \right ) = n_\alpha $.
\end{itemize}
The existence of these moduli spaces follows from \cite[Proposition 2.2.2]{chambert2016motivic}. 
In order to avoid any local obstruction to the existence of such sections, we assume that local sections exists: we suppose that for every closed point $v$ of $C_0$ the intersection of $G(F_v) $ with $ \UUU ( \mathcal O_v )$ is non-empty, where $F_v$ is the completion of $F$ at $v$ and $\mathcal O_v $ is its ring of integers. 

Let $\mathscr M_k$ be the localisation of the Grothendieck ring of varieties over $k$ at the class $\LL$ of the affine line, that is $\mathscr M_k = \KVar_k [\LL^{-1}]$. 
Our focus will be on studying the asymptotic behaviour of the class $[M_\nn]$ in $\mathscr M_k$ endowed with the weight topology of \cite{bilu2018motivic}, following a question raised by Peyre \cite[Question 5.4]{peyre2021beyond}. 

Any section $\sigma : C \to \XXX$ gives rise to an element of $\Pic ( X )^\vee$. It is explicitly given by
\[
\nn_\sigma : \sum_{\alpha \in \AAA} \lambda_\alpha \left [ D_\alpha \right ] \mapsto \sum_{\alpha \in \AAA} \lambda_\alpha \deg ( \sigma^* \LLL_\alpha ) 
\]
and this defines a pairing 
\[
\langle \, \cdot \, , \, \cdot \, \rangle  : \Pic ( X ) \otimes \Pic ( X )^\vee \to \ZZ . 
\]
Let $\LLL_{\rho ' } = \otimes_{\alpha\in \AAA} \rho_\alpha ' \LLL_\alpha$ be the generic log-anticanonical line bundle. 
By the previous pairing, any section $\sigma$ parametrized by $M_\nn$ satisfies $\nn_\sigma = \nn$
and we will write $\langle \rho ' , \nn \rangle $ for  $ \langle \sum_{\alpha} \rho_\alpha ' [ D_\alpha ] , \nn_\sigma \rangle $. 
For now we restrain ourself to a particular case of our main result.  
\begin{mythmintro}
Let $\pi  : \XXX \to C $ be a family as above, with $\UUU = \XXX$. Then 
\[
[ M_{\nn } ]\LL ^{- \langle \rho , \nn \rangle }
\]
converges in the completion of $\mathscr M _ k  $ for the weight topology, when $\min ( n_\alpha )_{\alpha \in \AAA} \to \infty$. Furthermore,
the limit is a non-zero effective element
which can be interpreted as a motivic Euler product in the sense of Bilu's thesis \cite{bilu2018motivic}.
\end{mythmintro}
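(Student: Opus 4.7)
The plan is to analyze the multivariate motivic height zeta function
\[
Z(\mathbf{T}) \;=\; \sum_{\nn} [M_\nn] \prod_{\alpha \in \AAA} T_\alpha^{n_\alpha},
\]
with coefficients in $\mathscr M_k$, and to extract the asymptotics of its coefficients by a multivariate Tauberian argument. The equivariant hypothesis is what makes the approach tractable: the $G_F$-action identifies a dense subset of the space of sections of $\pi$ with an $F$-affine space on which motivic Poisson duality, in the form developed by Chambert-Loir--Loeser and Bilu, is available.

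First I would apply the multivariate motivic Poisson summation formula to decompose $Z(\mathbf{T})$ as a main term given by a motivic Euler product $\prod_{v \in C} Z_v(\mathbf{T})$, plus contributions from nontrivial additive characters of $G_F$ which, by equivariance and an integration-by-parts on the $F$-affine space, converge on a polydisc strictly larger than that of the main term. I would then compute each local factor $Z_v(\mathbf{T})$ by a Denef-type stratification of $\XXX_v$ according to the boundary divisors $\mathscr L_\alpha$, refining the anticanonical-degree computation of \cite{chambert2016motivic} so as to track each variable $T_\alpha$ separately. The leading behaviour of $Z_v(\mathbf{T})$ is a product of geometric series with ``poles'' along $T_\alpha = \LL^{-\rho_\alpha}$, and after factoring these out by a Kapranov-zeta-type renormalisation one obtains a regularised function $\widetilde Z(\mathbf{T})$ that converges on a polydisc strictly containing the critical point $(\LL^{-\rho_\alpha})_{\alpha \in \AAA}$, whose value there is a motivic Euler product of Peyre-type local densities.

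The last, and hardest, step is a genuinely multivariate Tauberian extraction in the weight topology: one must pass from the analytic behaviour of $Z$ near the critical point to the asymptotics of the individual coefficients $[M_\nn]\LL^{-\langle \rho , \nn \rangle}$ as $\min_\alpha n_\alpha \to \infty$. In the univariate case, where one specialises to a single ray in the effective cone, this is essentially the Tauberian theorem of \cite{bilu2018motivic}; the multivariate version requires realising the renormalised coefficient as an iterated motivic residue and checking, via careful weight-filtration estimates, that the contributions from subdominant singularities vanish in the limit as long as $\nn$ stays uniformly far from every wall of the effective cone. This ``going far from the boundary'' hypothesis is precisely what prevents a single-variable Tauberian argument from concluding and is in the spirit of Peyre's beyond-Manin philosophy \cite{peyre2021beyond}. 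Once this estimate is in place, the identification of the limit with the motivic Euler product, together with its effectivity and non-vanishing, follows from the local computation and from the hypothesis on existence of local sections, which ensures that each local factor is a non-zero effective class of $\mathscr M_k$.
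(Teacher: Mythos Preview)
Your overall strategy matches the paper's: Poisson summation over characters, local Euler factorisation, identification of the critical poles via Kapranov zeta functions, and a final coefficient extraction with weight estimates to kill the subdominant terms and the non-trivial characters. In that sense the proposal is correct.

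Where your description diverges from the paper is in the final step, which you frame as a ``genuinely multivariate Tauberian extraction'' via ``iterated motivic residue''. The paper does not invoke any Tauberian machinery. Instead it exploits the explicit rational shape of the Kapranov zeta function of $C$: for $C=\PP^1_k$ one has $Z_{\PP^1_k}(T)=((1-T)(1-\LL T))^{-1}$, and for general $C$ a similar rational expression with the Jacobian factor $[\Pic^0(C)]$. This gives an exact partial-fraction decomposition of the denominator $\prod_{\alpha}Z_C^{\mathrm{Kap}}(\LL^{\rho_\alpha-1}T_\alpha)$ indexed by $\varepsilon\in\{0,1\}^{\AAA}$, and the $\nn$-th normalised coefficient of $Z(\mathbf T,0)$ becomes a finite sum of terms of the form
\[
\sum_{\nn'\leqslant\nn}\mathfrak b_{\nn-\nn'}\,\LL^{-\langle\rho,\nn-\nn'\rangle}\,\LL^{-\langle\varepsilon,\nn'\rangle},
\]
where the $\mathfrak b_\mm$ are the coefficients of the regularised Euler product $\prod_v F_v(\mathbf T,0)$. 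For $\varepsilon=\mathbf 0$ this is precisely the $\nn$-th partial sum of $\prod_v F_v((\LL^{-\rho_\alpha}),0)$ and converges by the paper's Proposition~\ref{proposition:linear-conv-criterion}; for $\varepsilon\neq\mathbf 0$ it is disposed of by the elementary Lemma~\ref{lemma:negligible-terms}. The non-trivial characters are handled identically once one observes that $\AAA_U^0(\xi)\subsetneq\AAA$, so at least one extra factor of $\LL^{-\rho_\alpha m_\alpha}$ survives the normalisation. So what you call ``iterated residues'' and ``subdominant singularities'' is in the paper a completely explicit partial-fraction bookkeeping, and the only analytic input is the weight-linear convergence criterion for Euler products (Proposition~\ref{proposition:linear-conv-criterion}) together with Lemma~\ref{lemma:negligible-terms}. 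Your sketch would benefit from making this concrete rather than gesturing at an abstract Tauberian step that, in this motivic setting, does not exist off the shelf.
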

The condition $\min ( n_\alpha ) \to \infty$ actually means that the class of the sections in the dual of the effective cone goes arbitrary far from its boundaries in $\Pic ( \XXX )^\vee $
 (this class of curves is the \textit{multidegree} mentioned before). However, note that this condition is not strong enough to fully distinguish components of the moduli space of sections when $\UUU$ is not necessarily equal to $ \XXX$: one has to introduce additional invariants, corresponding to local intersection degrees above $S$,
 and take them to be arbitrarily large as well. 
We refer to Notation \ref{def:euler-product} for the precise definition of such a motivic Euler product and to Proposition \ref{ptn:final-result} and Theorem \ref{thm:final-result} for our most general result.  
For example, still assuming in this introduction $\UUU = \XXX$ and $C=C_0$, then the convergence can be understood as 
\[
[ M_\nn ]\LL ^{- \langle  \rho , \nn \rangle } 
\longrightarrow
\frac{\LL^{( 1 - g ) \dim (X)  }}{(1-\LL^{-1})^{\rg (\Pic (X))}}
 \prod_{v\in C} \mathfrak{c}_v  \qquad \text{when} \min_{\alpha \in \AAA} ( n_\alpha ) \to \infty
\]
where for all but a finite number of places $v$
\[
\mathfrak{c}_v  =  (1-\LL^{-1})^{\rg (\Pic (X))} \frac{[\XXX_v]}{\LL^{\dim (X)}} . 
\]
We give in Remarks \ref{remark:interpretation-local-term-volume} and \ref{remark:interpretation-local-term-volume-integral-points} a complete interpretation of those local factors in terms of motivic volume. 

This work is based on the study of the multivariate motivic height zeta function 
\[
Z ( \mathbf T ) = \sum_{\nn \in \mathbf Z^{\AAA} } [M_\nn] \mathbf  T^{\nn} 
\]
where
$\TT $ is the family of indeterminates $( T_\alpha )_{\alpha \in \AAA} $ and thus $\TT^{\nn} $ stands for the product $ \prod_{\alpha \in \AAA} T_\alpha ^{n_\alpha }$ for all $\nn \in \mathbf Z^{\AAA}$. It is proven in \cite[Chapter 6]{bilu2018motivic} that this function can be written as a sum over a space of characters.
From this decomposition, Bilu deduces a description of the first pole of the restriction of the motivic height zeta function to the line generated by the canonical line bundle. 
Here, we push her method one step further to be able to distinguish the components given by all possible multidegrees.  
In order to separate components arising from the integrality conditions, introducing new variables and refining several tools due to Bilu is found to be necessary. 

In the first section of this paper we start by collecting some definitions and classical results about Grothendieck rings of varieties. We shortly recall what are symmetric products of classes in the Grothendieck ring of varieties, leading to Bilu's definition of the motivic Euler product \cite{bilu2018motivic}. For completeness, we also add  a motivic Fourier transform together with a motivic Poisson formula to this list of reminders.  
The second section is devoted to a study of the convergence of such Euler products. 
We briefly recall the main properties of the weight on the Grothendieck ring of varieties, include a useful convergence criterion for Euler products and quickly study sums of families in this context. 
In the third part of this work we present a bunch of convergence results in the particular setting of compactifications of vector spaces.  
The fourth and last section is the heart of the proof of our result. For clarity's sake, we start with treating the case of the projective line and $\UUU = \XXX$, then generalise our argument to any smooth projective curve, and finally treat the general case $\UUU \subset \XXX$. 



\section{Preliminaries}

\subsection{Rings of varieties} \label{paragraph:ring-of-varieties}
Along this paper we work in Grothendieck rings of varieties, or in localisations and completions of such rings. 
Here we collect some definitions and properties. 
References for this paragraph are the paper of Denef and Loeser \cite{denef1999germs}, the second chapter of the book by Chambert-Loir, Nicaise and Sebag \cite{chambert2018motivic} and the articles of Chambert-Loir and Loeser \cite{chambert2016motivic} and Clucker and Loeser \cite{cluckers2010constructible}.  

Let $R$ be a Notherian scheme. 
The Grothendieck group of varieties $\KVar _R$ is defined as the abelian group generated by all $R$-varieties (that is, $R$-schemes of finite presentation), with relations 
\[
X - Y
\]
whenever $X$ and $Y $ are isomorphic $R$-varieties and 
\[
X - Y - U
\]
whenever $X$ is an $R$-variety, $Y $ is a closed subscheme of $X$ and $U$ is its open complement in $X$. 
The product $[X][Y]=[X\times_R Y]$ defines a ring structure on $\KVar _R $ with unit element the class of $R$ over itself with the identity structural map. 
The class of the affine line $\mathbf A^1 _R $  in $\KVar_R $ is denoted by $\LL$ and 
the localised Grothendieck ring of varieties $\KVar_R [\LL ^{-1}] $  by $ \mathscr M_R  $. Any constructible subset $X$ of a $R$-variety admits a class $[X]$ in such rings (see e.g. page 59 in \cite{chambert2018motivic}). In our case, a constructible subset is a finite union of locally closed subsets. 

The Grothendieck ring of varieties with exponentials $\KExpVar_R$ is defined in a similar way. Its generators are pairs of $R$-varieties $X$ together with morphisms $f : X \to \mathbf A^1 = \Spec ( \mathbf Z [T] )$. Relations are the isomorphism relation 
\[
(X , f ) - ( Y , f \circ u )
\]
whenever $X,Y$ are $R$-varieties, $f:X\to \mathbf A^1 $ a morphism and $u: Y \to X $ an isomorphism of $R$-varieties; the scissors relation
\[
(X, f ) - ( Y , f_{| Y} ) - ( U , f_{|U} ) 
\]
whenever $X$ is a $R$-variety, $Y$ a closed subscheme of $X$, $U$ its complement in $X$ and $f : X \to \mathbf A^1$ a morphism; and finally the relation 
\[
( X \times_\mathbf Z \mathbf A^1 , \pr_2 )
\]
whenever $X$ is a $R$-variety and $\pr_2$ is the second projection. If $X$ is a constructible set and $f : X \to \mathbf A^1 $ a piecewise morphism (given by the datum of morphisms $f_i : X_i \to \mathbf A ^1 $ on a partition $(X_i)_{1\leqslant i\leqslant m }$ of $X$ into locally closed subsets) then the class $[X,f]$ is well-defined. The class $[\mathbf A^1 _R , 0 ]$ is denoted $\LL$. 
Sending a class of a $R$-variety $X$ to the class $[X,0]$ defines a morphism of Abelian groups $\iota : \KVar_R \to \KExpVar_R$ sending $\LL$ to $\LL$. 
The product 
\[
[X,f][Y,g]=[X\times_R Y , f \circ \pr_1 +  g \circ \pr_2 ]
\]
defines a ring structure on $\KExpVar_R$, with unit element $[\Spec ( R ) , 0 ]$. Then the morphism $\iota$ is actually a morphism of rings. Localising at $\LL$, one gets a ring $\ExpM_R  $ together with a ring morphism $\iota : \mathscr M_R \to \ExpM_R  $. 

Any morphism $u : R \to S$ of $k$-varieties induces a morphism of group 
\[
u_! : \KExpVar_R \to \KExpVar_S 
\]
sending any effective element $[X,f]_R$ to $[X, f]_S$, where we view $X$ as an $S$-variety through $u$. If $u$ is a immersion, $u_!$ is a morphism of rings. The morphism $u$ induces as well a morphism of rings in the other direction
\[
u^* : \KExpVar_S \to \KExpVar_R 
\]
sending any effective element $[X,f]_S$ to $[X \times_S R , f \circ \pr_X ]_R $. If $T$ is another $S$-variety,
combining pull-backs and product, one obtains an exterior product
\[
\boxtimes : \KExpVar_{R} \times \KExpVar_{T} \overset{ \pr_R^* \pr_T^*}{\longrightarrow}   \KExpVar_{R \times_S T} . 
\] 

We conclude this subsection by introducing an analogue of the exponential sums of characters over finite field. Assume for a while that $k$ is a finite field and $\psi : k \to \CC^*$ is a non-trivial character. Then the exponential sum associated to a pair $[X,f]_k =[X,f]_{\Spec ( k ) } $ is 
\[
\sum_{x\in X(k)} \psi ( f (  x ) ) .
\]
Let $S$ be a $k$-variety, together with a morphism $u : S \to \mathbf A^1  $, and $g :  X\to S$ a variety over $S$ together with a morphism $f : X \to \mathbf A^1 $.
\begin{center}
\begin{tikzcd}
										& 	\mathbf A^1	& \mathbf A^1 \\
X \arrow[r,"g"] \arrow[dr] \arrow[ur,"f"] & S  \arrow[d]  \arrow[ur,"u"]		& 						\\
    &  \Spec ( k )  & 										& 
\end{tikzcd}
\end{center} 
We write $\theta = [ X , f ]_S$.
Then over a point $s$ we can introduce the sum over the fibre 
\[
\theta (s) = \sum_{\substack{ x\in X ( k ) \\ g ( x ) = s }} \psi ( f ( x ) ) .
\]
Using the additivity of $\psi$ we decompose fibre by fibre the sum 
\[
\sum_{s\in S (  k )} \theta ( s  ) \psi ( u (s) ) = \sum_{s\in S (  k )} \sum_{\substack{ x\in X ( k ) \\ g ( x ) = s }} \psi ( f ( x ) ) \psi (  u ( s) ) = \sum_{x\in X ( k )} \psi ( f (x) + u ( g ( x) ) )  
\]
which is the exponential sum associated to the pair $[X \times_S S , f + u \circ g ]_S =   [ X , f ]_S  [ S , u ]_S$ viewed in $\KExpVar_k$.
\begin{center}
\begin{tikzcd}
X \times_S S \arrow[r,"\sim"] \arrow[d] & X \arrow[r,"f"] \arrow[dl,"g"] & \mathbf A^1 \\
S \arrow[r,"u"] 										 & \mathbf A^1  						& 				
\end{tikzcd}
\end{center} 
Thus in general we define the \textit{sum over rational points }
\begin{equation}\label{def-equation:exponential-sum-notation}
\sum_{x\in S} \theta ( s )  \psi ( u ( s ) ) 
\end{equation}
for any $\theta \in \ExpM_S$ and $u:S \to \mathbf A^1$ 
as the image by $\KExpVar_S \to \KExpVar_k$ of the class $\theta \cdot [ S , u ]_S$
(we may sometimes omit the exponential factor $\psi ( u (s ))$ when writing the sum). This class is explicitly given by $[X, f + u \circ g ] $ when $\theta = [X , f ]_S$ and $g:X \to S$ is a $S$-variety. 
This notation easily extends to the relative setting when $v : S \to T $ is a morphism of varieties over $k $  : we write
\[
v_! \theta = \sum_{s \in S / T } \theta ( s ) \psi ( u ( s )) 
\]
where $v_! : \KExpVar_S \to \KExpVar_T$ is the morphism of groups induced by $v$. 

\subsection{Symmetric products} 
Let $R$ be a variety over a perfect field $k$, $X$ a variety over $R$ and $\XXX = (X_i)_{i  \in I}$ be a family of quasi-projective varieties above $X$, where $I$ is an arbitrary set. Let  $\pi = (n_i)_{i\in I} \in \NN^{(I)} $ be a family of non-negative integers with finite support, which we call a \textit{partition}. This terminology is justified by particular cases of set $I$: for example if $I=\NN^r \setminus \{ \mathbf 0 \} $ for some given $r\in \NN ^*$, then a partition of a non-zero tuple of non-negative integers $\nn \in I $ is a family $(n_\mathbf i)_{\mathbf i \in I}$ such that $\sum_{\mathbf i \in I } \mathbf i n_i = \nn$.  

Given a partition $\pi$, one can define an $R$-variety $S^\pi ( \XXX / R) $ called the ($\pi$-th) symmetric product of $\XXX$. 
Its geometric points can be understood as collections $(D_i )_{i\in I}$ of effective zero-cycles such that $D_i \in S^{n_i} X_i$ for all $i\in I$
and the union of all the images in $X$ of the supports of the $D_i $'s  consists of exactly $ \sum_{i\in I} n_i$ distinct geometric points  \cite[\S 3.1 \& \S 3.2]{bilu2018motivic}. 
This is formally defined as follows: if $\left (\prod_{i\in I} X^{n_i}\right )_*$ is the complement of the diagonal of $\prod_{i\in I} X^{n_i} $, 
in other words $\left (\prod_{i\in I} X^{n_i}\right )_*$ is the set of points with pairwise distinct coordinates, then
there is a Cartesian diagram
\begin{center}
\begin{tikzcd}
\left (\prod_{i\in I} X_i^{n_i}\right )_* \arrow[r , hook] \arrow[d] & \prod_{i\in I} X_i^{n_i}  \arrow[d] \\
\left (\prod_{i\in I} X^{n_i}\right )_* \arrow[r , hook]  &  \prod_{i\in I} X^{n_i} 
\end{tikzcd}
\end{center}
defining an open subset $\left (\prod_{i\in I} X_i^{n_i}\right )_*$ of points of $\prod_{i\in I} X_i^{n_i}  $ mapping to points of $\prod_{i\in I} X^{n_i} $ with pairwise distinct coordinates. Now one considers the natural action of the symmetric groups $\mathfrak{S}_{n_i}$ on each of the $X^{n_i}_i$'s and defines the symmetric product 
\[
S^\pi  ( \XXX / R ) = \left (\prod_{i\in I} X_i^{n_i}\right )_* / \prod_{i\in I} \mathfrak{S}_{n_i} 
\]
which is well-defined since $\XXX$ is a family of quasi-projective varieties over $X$. 
One can go one step further and define the symmetric product $S^\pi ( \XXX / R)$ of a family $(X_i)_{i\in I}$ of non-effective classes of $\KVar_{X} $ as well as symmetric products of varieties with exponentials, in a way compatible with the definition of symmetric product of $X$-varieties \cite[\S 3.5-3.7]{bilu2018motivic}. 
If $\nn \in \NN^r \setminus \{ 0 \}$, then $S^\nn ( \XXX / R ) $ is by definition the disjoint union of all the  $S^\pi ( \XXX / R ) $ for every partition $\pi $ of $\nn$. 
In case all the $X_i$ are equal to $X$, we obtain a decomposition of the classical symmetric power $S^n X$ of $X$ into locally closed subsets $S^\pi X $ \cite[\S 3.1.1]{bilu2018motivic}. 

Let $\XXX = ( \mathfrak{a}_i )_{i\in I} $ be a family of elements of $\KVar_X$ and $\YYY = ( \mathfrak{b}_i )_{i\in I} $, $\ZZZ = (  \mathfrak{c}_i )_{i\in I} $ the families obtained by restriction respectively to a closed subscheme $Y\subset X$ and to its complement $U$. 
Then
\[
S^\pi ( \XXX )  = \sum_{\pi ' \leqslant \pi } S^{\pi ' } ( \YYY ) \boxtimes S^{\pi - \pi ' } (  \ZZZ )  
\]
in $\KVar_{S^\pi X} $, where $\boxtimes$ is the exterior product morphism  
\[ 
{\KVar_{S^{\pi ' } Y} \times \KVar_{S^{\pi - \pi ' } U} \to \KVar_{S^{\pi ' } Y \times S^{\pi  - \pi ' } U} }
\]
composed with the morphism induced by $S^{\pi ' } Y \times S^{\pi  - \pi ' } U \to S^\pi X $ on the rings of varieties \cite[Corollary 3.5.2.5]{bilu2018motivic}.

If $\XXX$ and $\XXX ' $ are two families of quasiprojective varieties over $X$, it is clear from the definition that there is an isomorphism 
\begin{equation}\label{equation:iso-Spi(XXXtimesXXX)}
S^\pi ( \XXX \times_X \XXX  ' )  \overset{\sim}{\longrightarrow} S^\pi ( \XXX ) \times_{S^\pi X} S^\pi ( \XXX ' ) . 
\end{equation}

\subsection{Motivic Euler products}
In this paragraph, $R$ is a variety and $\mathrm A_R $ is one of the rings $\KVar_R$, $\KExpVar_R$, $ \mathscr M_R $ or $ \ExpM_R$ previously defined. 

Bilu introduced in the third chapter of her thesis \cite[\S 3.8]{bilu2018motivic} the following notation. 
\begin{mynotation}[Motivic Euler product]\label{def:euler-product}
Consider a $R$-variety $X$ and $\XXX = ( X_i )_{i\in I }  $ a family of elements of $\mathrm A_X$. Then the product
\[
\prod_{x\in X/R} \left ( 1 + \sum_{i\in I} X_{i,x}  T_i \right ) 
\]
is defined as a notation for the formal series
\[
\sum_{\pi \in \NN^{(I)}} [ S ^\pi ( \XXX / R ) ] \TT^\pi 
\in \mathrm A_R [[\TT ]] 
\]
where $\TT = ( T_i )_{i\in I}$ is a family of indeterminates indexed by the set $I$ and $\TT^\pi $ stands for $ \prod_{i\in I} T_i^{n_i}$ for every partition $\pi \in \NN^{(I)}$. 
\end{mynotation}

The following proposition shows that this object behaves, in a sense, \textit{like a product}.
\begin{myptn}[\S  3.8.1 of \cite{bilu2018motivic}]
Let $R$ be a variety, $X$ be a $R$-variety and $\XXX = ( X_i )_{i\in I}$ be a family of elements of $ \mathrm A_X $.
\begin{itemize}
\item The Euler product notation is compatible with the $X=R$ case: 
\[
\prod_{x\in R/R} \left ( 1 + \sum_{i\in I} X_{i,x}  T_i \right ) = 1 + \sum_{i\in I} X_{i}  T_i .
\]
\item The Euler product notation is associative: if $X=U\cup Y $ with $Y$ a closed subscheme of $X$ and $U$ its complement, then 
\[
\prod_{x\in X/R} \left ( 1 + \sum_{i\in I} X_{i,x}  T_i \right )  = \left ( \prod_{u \in U/R} \left ( 1 + \sum_{i\in I} X_{i,u}  T_i \right ) \right )\left ( \prod_{y\in Y/R} \left ( 1 + \sum_{i\in I} X_{i,y}  T_i \right ) \right )
\]
when considering the Euler products of the restrictions $\mathscr Y =  (X_i \times_X Y )_{i\in I}$ and $\UUU =  {( X_i \times_X U )_{i\in I}}$. 
\item As a consequence of the previous identity, the Euler product notation is compatible with finite products: if $X$ is a finite disjoint union of varieties $Y_1$, ... , $Y_m$ isomorphic to $R$, and if $(Y_{i,j})_{i\in I}$ is the restriction of $\XXX $ to $Y_j$ for $j=1 , .... , m$, then 
\[
\prod_{x\in X/R} \left ( 1 + \sum_{i\in I} X_{i,x}  T_i \right ) = \prod_{j=1}^m \left ( 1 + \sum_{i\in I} Y_{i,j}  T_i \right ).
\]
 \end{itemize}
\end{myptn}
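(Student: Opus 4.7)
The plan is to treat the three assertions in order: the first describes a trivial case, the second extracts associativity from the decomposition formula for $S^\pi(\XXX)$ recalled just above Notation \ref{def:euler-product}, and the third follows from iterating the second.

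For the first assertion, I would unwind the definition of $S^\pi(\XXX/R)$ when $X = R$. Writing $\pi = (n_i)_{i\in I}$, the fibre product $\prod_{i\in I} X^{n_i}$ taken over $R$ is simply $R$ itself, since each factor equals $R$ over $R$; consequently, its open subset $\left(\prod_{i\in I} X^{n_i}\right)_*$ of points with pairwise distinct projections to $X = R$ is empty as soon as $\sum_i n_i \geq 2$. Only two kinds of partitions therefore contribute: the empty partition, which gives $S^\pi(\XXX/R) = R$ and hence the class $1$ in $\mathrm A_R$; and the partitions $\pi$ having a single nonzero coordinate equal to $1$ at some index $i_0 \in I$, for which $S^\pi(\XXX/R) = X_{i_0}$. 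Summing these contributions and collecting the corresponding monomials yields exactly $1 + \sum_{i \in I} X_i T_i$.

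For the second assertion, I would substitute the decomposition
\[
S^\pi(\XXX) = \sum_{\pi' \leq \pi} S^{\pi'}(\mathscr Y) \boxtimes S^{\pi - \pi'}(\UUU)
\]
recalled from \cite[Corollary 3.5.2.5]{bilu2018motivic} into the formal series on the left, and reindex the resulting double sum over pairs $(\pi', \pi'')$ with $\pi'' = \pi - \pi'$. Since the indeterminates factor as $\TT^\pi = \TT^{\pi'} \TT^{\pi''}$, the series splits as the product of the two Euler products attached to $\mathscr Y$ and $\UUU$. One has to check that the exterior product $\boxtimes$, composed with the pushforward along $S^{\pi'} Y \times S^{\pi - \pi'} U \to S^\pi X$ and then down to $R$, is compatible with the ring multiplication in $\mathrm A_R$; this is immediate from the functoriality of $\boxtimes$ and the compatibility of pushforward with products.

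The third assertion then follows by induction on $m$, the number of components of $X$. The base case $m = 1$ is the first assertion applied to $Y_1 \simeq R$, and the inductive step decomposes $X$ as the disjoint union of $Y_1$ (closed in $X$, say) and $Y_2 \sqcup \cdots \sqcup Y_m$ (open), and then invokes associativity. The main obstacle is really the first assertion: one must verify carefully that the fibrewise notion of ``pairwise distinct coordinates'' over $R$ truly forces the total weight $\sum_i n_i$ to be at most one when $X = R$, and confirm that the empty partition indeed contributes the unit of $\mathrm A_R$. Beyond this, the remainder of the proof is formal bookkeeping with symmetric products and formal power series in the variables $(T_i)_{i \in I}$.
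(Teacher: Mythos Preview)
The paper does not supply its own proof of this proposition: it is stated with a reference to \S 3.8.1 of \cite{bilu2018motivic} and no argument is given. Your proposal is a correct direct verification along the lines one would expect: the $X=R$ case reduces to the observation that $\left(\prod_{i\in I} X^{n_i}\right)_*$ is empty once $\sum_i n_i\geqslant 2$, the associativity follows immediately from the decomposition $S^\pi(\XXX)=\sum_{\pi'\leqslant\pi} S^{\pi'}(\YYY)\boxtimes S^{\pi-\pi'}(\ZZZ)$ recalled just before Notation~\ref{def:euler-product}, and the finite-product compatibility is an induction combining the first two points. There is nothing to compare against in the paper itself.
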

Another illustration of this behaviour is the following non-trivial property of multiplicativity, which is a particular case of  \cite[Proposition 3.9.2.4]{bilu2018motivic}. 
Note that in the remainder of the article, we will mostly restrict ourselves to the case $I= \NN^r \setminus \{ \mathbf 0 \}$ and $T_i = \TT^\mm$, where $T_1$, ..., $T_r$ is a finite family of indeterminates and $\TT^\mm = \prod_{j=1}^r T_j^{m_j}$ for every $r$-tuple $\mm = ( m_j)_{1\leqslant j \leqslant r }$ of non-negative integers. 

\begin{myptn}\label{proposition:euler-product:multiplicativity}
Let $R$ be a variety, $X$ a variety over $R$, 
$\AAA = ( A_{\mm})_{\mathbf{m}\in \NN^r } $ a family of effective elements of $\mathrm A_R$, 
and $\BBB = (B_{\mm})_{\mathbf{m}\in \NN^r }$ a family of elements of $\mathrm A_R$ such that $A_\mathbf 0=B_\mathbf 0=1$. Then 
\begin{align*}
& \prod_{x\in X / R } \left ( \left (\sum_{\mathbf{m}\in \NN^r } A_{\mm ,x} \mathbf T^{\mm}\right ) \left ( \sum_{\mathbf{m}\in \NN^r} B_{\mm,x} \mathbf T^{\mm} \right )\right  )  \\
& = \left ( \prod_{x\in X / R } \left ( \sum_{\mathbf{m}\in \NN^r } A_{\mm,x} \mathbf T^{\mm}\right ) \right ) 
\left ( \prod_{x\in X /R } \left ( \sum_{\mathbf{m}\in \NN^r } B_{\mm,x} \mathbf T^{\mm}\right ) \right )
\end{align*}
in $\mathrm A_R [[\mathbf T]]$.   
\end{myptn}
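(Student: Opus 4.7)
My plan is to compare the coefficient of each monomial $\TT^\nn$ on both sides and verify the resulting identity in $\mathrm{A}_R$. Writing $\mathscr{C} = (C_\mm)$ with $C_\mm = \sum_{\mm'+\mm''=\mm} A_{\mm'} B_{\mm''}$, the left-hand coefficient is $[S^\nn(\mathscr{C}/R)]$, while the right-hand one is $\sum_{\nn'+\nn''=\nn}[S^{\nn'}(\AAA/R)]\,[S^{\nn''}(\BBB/R)]$, so it suffices to prove
\[
[S^\nn(\mathscr{C}/R)] \;=\; \sum_{\nn'+\nn''=\nn}[S^{\nn'}(\AAA/R)]\,[S^{\nn''}(\BBB/R)]
\]
for every $\nn \in \NN^r$.

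To do so I would introduce the bi-indexed auxiliary family $\DDD = (A_{\mm'} \cdot B_{\mm''})_{(\mm',\mm'') \in \NN^r\times\NN^r \setminus \{(0,0)\}}$ of classes in $\mathrm{A}_X$, equipped with fresh formal variables $U_{(\mm',\mm'')}$. A direct expansion gives
\[
\Bigl(\sum_{\mm'}A_{\mm',x}\TT^{\mm'}\Bigr)\Bigl(\sum_{\mm''}B_{\mm'',x}\TT^{\mm''}\Bigr) = \sum_{\mm}C_{\mm,x}\TT^\mm,
\]
so the specialisation $U_{(\mm',\mm'')} \mapsto \TT^{\mm'+\mm''}$ carries the Euler product of $\DDD$ to the left-hand side of the proposition, by regrouping bipartitions $\Pi=(n_{\mm',\mm''})$ of $\DDD$ into partitions $\pi=(n_\mm)$ of $\mathscr{C}$ with $n_\mm = \sum_{\mm'+\mm''=\mm} n_{\mm',\mm''}$. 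On the other hand, applying the isomorphism \eqref{equation:iso-Spi(XXXtimesXXX)} fibrewise at each label $(\mm',\mm'')$ to split $A_{\mm'}\cdot B_{\mm''}$ into the two factors, and then reorganising by the marginals $\Pi_A = \bigl(\sum_{\mm''}n_{\mm',\mm''}\bigr)_{\mm'}$ and $\Pi_B = \bigl(\sum_{\mm'}n_{\mm',\mm''}\bigr)_{\mm''}$ of $\Pi$, the same Euler product specialises to the right-hand side via $(\nn',\nn'') = \bigl(\sum \mm'\, n_{\mm',\mm''},\ \sum \mm''\, n_{\mm',\mm''}\bigr)$. Associativity of Euler products (the preceding proposition) ensures that specialisation and factorisation commute.

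The main difficulty lies in this second factorisation: in $\prod_x\bigl(\sum A_\mm \TT^\mm\bigr) \cdot \prod_x\bigl(\sum B_\mm \TT^\mm\bigr)$ the two configurations of points in $X$ are allowed to share geometric points, whereas in the Euler product of $\DDD$ the coincidence of an $\AAA$- and a $\BBB$-point at $x \in X$ is encoded as a single label $(\mm',\mm'')$ with both $\mm'$ and $\mm''$ nonzero at that point. Reconciling the two pictures requires stratifying $S^{\nn'}(\AAA/R)\times_R S^{\nn''}(\BBB/R)$ according to which support points of the $\AAA$- and $\BBB$-configurations coincide, applying \eqref{equation:iso-Spi(XXXtimesXXX)} on each diagonal stratum to identify the coincident part with a symmetric product of a product family, and then matching the resulting pieces with those of $[S^\nn(\mathscr{C}/R)]$. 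Once this bookkeeping is carried out carefully, the term-by-term equality of coefficients follows from the very definition of symmetric products through the open subscheme $\bigl(\prod_{i\in I} X^{n_i}\bigr)_*$ of tuples with pairwise distinct coordinates, which establishes the proposition.
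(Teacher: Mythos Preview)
The paper does not give its own proof of this proposition: immediately before the statement it notes that this is ``a particular case of \cite[Proposition 3.9.2.4]{bilu2018motivic}''. So the comparison is really between your sketch and Bilu's argument.

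Your outline is along the right lines and is essentially the strategy Bilu uses: pass to a doubly-indexed auxiliary family, then match the two sides by analysing how the support points of the $\AAA$- and $\BBB$-configurations can coincide. You have correctly located the crux of the matter in the stratification of $S^{\nn'}(\AAA/R)\times_R S^{\nn''}(\BBB/R)$ by incidence type, and the use of \eqref{equation:iso-Spi(XXXtimesXXX)} on the diagonal strata is the right tool.

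Two points deserve more care. First, your appeal to ``associativity of Euler products'' to justify that ``specialisation and factorisation commute'' is misplaced: associativity in the preceding proposition concerns decomposing the base $X$ into locally closed pieces, not substituting monomials for variables. What you actually need is that the symmetric-product construction is compatible with pushforward along the index map $(\mm',\mm'')\mapsto \mm'+\mm''$, which is a separate (and non-trivial) statement about how $S^\pi$ behaves when one coarsens the labelling set. Second, you do not use the hypothesis that $\AAA$ is effective. This asymmetry is not cosmetic: symmetric products of non-effective classes are \emph{defined} in \cite{bilu2018motivic} by writing a class as a difference of varieties and expanding, and the resulting formulas only interact well with products when at least one factor is genuinely geometric. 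Your stratification argument implicitly treats both families as if they were families of varieties; to make it go through for arbitrary $\BBB$ you would need to reduce to the effective case first, which is where the effectivity of $\AAA$ enters. The ``bookkeeping'' you defer at the end is thus not merely clerical---it is where these two issues must be resolved, and it occupies the bulk of Bilu's proof.
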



\subsection{Motivic Fourier transform and Poisson formula in families} 
In this subsection we recall the motivic analogue of a bunch of Fourier analysis tools used in the sixth chapter of  \cite{bilu2018motivic}, where an easy-to-handle expression of the motivic height zeta function is obtained by using a motivic Poisson formula. We will explicitly need this construction at the end of our proof (\S \ref{subsection:uniform-convergence}), where we will have to check an uniform convergence.
We follow \cite{chambert2016motivic} and the fifth chapter of \cite{bilu2018motivic}, which extends the scope of Hrushovski and Kazhdan's motivic Poisson formula  \cite{hrushovski2009motivic}.

\subsubsection{Local motivic Schwartz-Bruhat functions}
The first building block of a \textit{motivic} Poisson formula is a motivic analogue of classical Scwhart-Bruhat functions on the non-archimedian local field $K=k((t))$. One should keep in mind that this field can be thought about as the completion of the field of rational functions of a curve, for a valuation given by a closed point. The letter $t$ denotes an uniformiser of this completion; then the ring of integers of $K$ is $\mathcal O_K = k [[t]]$. 

Recall that a classical Schwartz-Bruhat function $\varphi $ on a locally compact, non-archimedian local field $L$ is a locally constant and compactly supported function on $L$. 
If $\varpi$ is an uniformiser for $L$ and $\mathcal O_L$ is the ring of integers of $L$, then there exist integers $M\leqslant N$ such that $\varphi $ is zero outside $\varpi^M \mathcal O_L$ and invariant modulo $\varpi^N \mathcal O_L$. The pair $(M,N)$ is called the level of $\varphi$ and the function $\varphi$ on $L$ can be seen as a function on the quotient $ \varpi^M \mathcal O_L / \varpi ^N \mathcal O_L $. 

The motivic analogue of such a function, as it has been introduced by Hrushovski and Kazhdan in \cite{hrushovski2009motivic}, takes values in the Grothendieck ring $\ExpM_k$. In order to deal with the fact that $k((t))$ is not locally compact for the topology induced by the valuation, one takes as a first definition the previous properties concerning invariance and compact support: a motivic Schwartz-Bruhat function of level $(M,N)$ is a function $t^M \mathcal O_K / t^N \mathcal O_K \to \ExpM_k$. 
Through the identification 
\[
\begin{array}{rll}
t^M \mathcal O_K / t^N \mathcal O_K  & \longrightarrow & \mathbf A_k^{M-N} \\
a_M t^M + a_{M+1}t^M + ... + a_{N-1} t^{N-1} & \longmapsto & ( a_M , ... , a_{N-1} ) 
\end{array}
\]
which endows $t^M \mathcal O_K / t^N \mathcal O_K$ with the structure of a $k$-variety,
such a motivic Schwartz-Bruhat function on $K$ is basically seen as an element of $\ExpM_{\mathbf A_k^{N-M}}$. In what follows we will freely use the notation 
\[
\mathbf A_k^{n ( M , N )} = \mathbf A_k^{n ( N-M ) } .
\]
for any $M \leqslant N $ and non-negative integer $n$. The previous identification naturally extends to $K^n$, leading to the following definition. 
\begin{mydef}Let $M\leqslant N $ and $n\geqslant 1$ be integers.
A (local) motivic Schwartz-Bruhat function of level $(M,N)$ on $K^n$ is an element of $\ExpM_{\mathbf A_k^{n(M,N)}}$. 
\end{mydef}
The \textit{extension-by-zero} morphism $ t^M \mathcal O_K / t^N \mathcal O_K  \to t^{M-1} \mathcal O_K / t^N \mathcal O_K  $ and the \textit{cutting} morphism $t^M \mathcal O_K / t^{N+1} \mathcal O_K  \to t^M \mathcal O_K / t^N \mathcal O_K  $ induce respectively a closed immersion $\mathbf A_k^{(M,N)} \to \mathbf A_k^{(M-1,N)} $ and a trivial fibration $ \mathbf A_k^{(M,N+1)} \to \mathbf A_k^{(M,N)} $ (with fibre $  \mathbf A_k^1$). Such morphisms in turn provide homomorphisms at the level of Grothendieck rings with exponentials, turning $(\ExpM_{\mathbf A^{n(M,N)}} )_{M\leqslant N}$ into a direct system, the direct limit of which is by definition the set of all motivic Schwartz-Bruhat functions on $K^n$. 

\paragraph*{Fourier kernel and transform.} Recall that we see $K=k((t))$ as the completion $F_v$ at a closed point $v$ of the field of fractions $F= k( C ) $ of a smooth projective curve over $k$, together with the choice of an uniformiser $t$. 
If one fixes a non-zero rational differential form $\omega \in \Omega_{K/k}$, then one obtains a non-zero $k$-linear map $r_v : K \to k$ defined by sending any element $a\in K$ to the residue at $v$ of the rational form $a\omega $:
\[
r_v ( a ) = \res_v ( a \omega ).
\]
There exists a smallest integer $\nu$ such that $r_v$ vanishes on $t^\nu \mathcal O_K$, given by the order of the pole of $\omega$ at $v$ (the \textit{conductor} of $r_v$). In particular, $r_v$ is invariant modulo $t ^N \mathcal O_K$ for all $N\geqslant \nu$ and can be seen as a linear function $r^{(M,N)} : \mathbf A_k^{(M,N)} \to \mathbf A_k^1$ for every $M\leqslant N$ such that $N\geqslant \nu$. 

The product $K\times K \to K$ restricts to a morphism $\mathbf A_k^{(M,N)} \times \mathbf A_k^{(M', N ' )} \to \mathbf A_k^{(M+M' , N'' )}$ for every $M\leqslant N $, $M' \leqslant N'  $ and $N'' = \min ( M' + N , M + N'  ) $. If $N,N',N'' \geqslant \nu$ it can be composed with $r_v^{(M+M',N'')}$ and one obtains an element   
\[
r_v : \mathbf A_k^{(M,N)} \times \mathbf A_k^{(M', N ' )} \overset{\cdot  \, \times \, \cdot }{\longrightarrow} \mathbf A_k^{(M+M' , N'' )} \overset{r_v^{(M+M',N'')}}{\longrightarrow} \mathbf A_k^1 
\]
of $\ExpM_{\mathbf A_k^{(M,N)} \times_k \mathbf A_k ^{(M',N')}} $, called the \textit{Fourier kernel on $K$} and written $\ee (xy)$. This notation is the analogue of the exponential factor $e^{2i\pi x y }$ of the integrand in classical Fourier analysis. 

More generally, the pairing $\langle x , y \rangle = \sum_{i=1}^n x_i y_i$ on $K^n$ provides a morphism 
\[
\mathbf A_k^{n(M,N)} \times \mathbf A_k^{n(M', N ' )} \overset{\langle \, \cdot  \, ,  \, \cdot \, \rangle }{\longrightarrow} \mathbf A_k^{(M+M' , N'' )} 
\]
which can be composed with $r_v^{(M+M' , N '' ) }$ to give the \textit{Fourier kernel $\ee( \langle x,  y \rangle ) $ on $K^n$}.

The Fourier transform of a motivic Schwart-Bruhat function $\varphi \in \ExpM_{\mathbf A_k^{n(M,N)}} $  of level $(M,N)$ on $K^n$ is defined as 
\begin{equation}\label{def:local-fourier-transform}
\mathscr F \varphi ( y ) = \int_{K^n} \varphi ( x)  \ee ( \langle x , y \rangle ) \dd x 
\end{equation}
which is a notation for the class of $\ExpM_{\mathbf A_k^{n(\nu - N , \nu - M)}}$ given by 
\[
\mathscr F \varphi  = \LL^{-Nn } \varphi  \times_{ \mathbf A_k^{n(M,N)}} [ \mathbf A_k^{n(M,N)} \times_k \mathbf A_k^{n ( \nu - N , \nu - M )} , r ]  
\]
where $r = r^{( M + \nu - N , N + \nu - M )} \circ \langle \, \cdot  \, ,  \, \cdot \, \rangle$.
The formal variable $y$ is thus\textit{ living in} $\mathbf A_k^{n ( \nu - N , \nu - M )}$. 
We can be more explicit when $\varphi = [ U , f ]$ with $g: U \to \mathbf A_k ^{n(M,N)}$ and $f:U \to \mathbf A^1$; in this case $\mathscr F \varphi$ is the class
\[
\mathscr F \varphi  = \LL^{-Nn} [ U  \times_k \mathbf A_k^{n ( \nu - N , \nu - M )}  , f \circ \pr_1 + r^{( M + \nu - N , N + \nu - M )} ( \langle g \circ \pr_1 , \pr_2 \rangle ) ]
\]
in $\ExpM_{\mathbf A_k^{n(\nu - N , \nu - M)}}$.
\begin{center}
\begin{tikzcd}
	U \times_k \mathbf A_k^{n ( \nu - N , \nu - M )}		\arrow[ddr,bend right]	\arrow[r,"\sim"]	& U \times_g \mathbf A_k^{n(M,N)} \times_k \mathbf A_k^{n ( \nu - N , \nu - M )}	\arrow[d]	\arrow[r]	& 	 U \arrow[r,"f"] \arrow[d,"g"]	 & \mathbf A ^1 \\
\mathbf A^1 &  \mathbf A_k^{n(M,N)} \times_k \mathbf A_k^{n ( \nu - N , \nu - M )} \arrow[r] \arrow[d] \arrow[l,"r",crossing over]  & 			\mathbf A_k^{n(M,N)} \arrow[d]	  & \\
 						& 				\mathbf A_k^{n ( \nu - N , \nu - M )}  \arrow[r]		   &  \Spec ( k )     &
\end{tikzcd}
\end{center} 
Notation \ref{def:local-fourier-transform} can be seen as a variant of the exponential sum notation of \S \ref{paragraph:ring-of-varieties}. 

\paragraph*{From local functions to global ones and summation over rational points.} 
Such a construction easily extends to finite products $\prod_{s\in S} F_s$ of completions of $F=k(C)$ at a finite number of closed points $s\in S$ of the curve $C$. 
The motivic Schwartz-Bruhat functions of level $(M_s , N_s )_{s\in S}$ are the elements of the ring 
\[
\ExpM_{\prod_{s\in S} \Res_{\kappa ( s)  / k} \mathbf A_{\kappa ( s ) } ^{n(M_s , N_s )} }
\]
where $\Res_{\kappa ( s)  / k} $ denotes the Weil restriction functor. The set of global motivic Schwartz-Bruhat functions on $K^n$ is defined as a direct limit over the set $S$ of closed points of $C$ and one can easily extend the previous Fourier kernel and transform to this setting; we will not be more explicit about this construction for now, since it will be generalised in the next paragraph. 

Nevertheless it might be enlightening for the reader to define the \textit{sum over rational points} of a global motivic Schwartz-Bruhat is this simple setting. Let $\varphi$ be a global Schwartz-Bruhat function of level $(M_s, N_s )_{s\in S}$ where $S\subset C$ is a finite set of closed points of $C$. 
We choose an uniformiser $t_s$ of $F_s$ for every $s\in S$. 
Consider the divisor $D = - \sum_{s\in S} M_s [s] $ on $C$ and remark that the embeddings $F\hookrightarrow F_s$ map the Riemann-Roch space 
\[
L ( D ) = \Gamma  ( C , \mathscr O_C ( D ) )  
\] 
(the space of global sections of the invertible sheaf $\mathscr O_C ( D )$) 
to $t_s^{M_s} \mathcal O_s $ for every $s\in S$. This mapping provides a morphism of varieties $\theta : \mathcal L ( D ) ^n  \to \Res_{\kappa ( s)  / k} \mathbf A_{\kappa ( s ) } ^{n(M_s, N_s)}$. 
The sum over rational points of $\mathbf A^n $ 
\[
\sum_{x\in k(C ) ^n} \varphi ( x )  = \sum_{x\in L ( D )  ^n } \theta^* \varphi
\]
is by definition the image in $\ExpM_k$ of the pull-back $\theta^* \varphi_S$, that is, the image of $\varphi  $ through the map
\[
\ExpM_{\prod_{s\in S} \Res_{\kappa ( s)  / k} \mathbf A_{\kappa ( s ) } ^{n(M_s , N_s )} } \overset{\theta ^* } {\longrightarrow} \ExpM_{\mathcal L ( D ) ^n } \longrightarrow \ExpM_k 
\]
which can be seen as another variant of the exponential sum notation of \S \ref{paragraph:ring-of-varieties}. 
In this context, one gets a motivic Poisson formula
\[
\sum_{x\in k ( C ) ^n }  \varphi (x ) = \LL^{(1-g)n} \sum_{y\in k(C) ^n} \mathscr F \varphi (y)  
\]
(see e.g. \cite[Theorem 1.3.10]{chambert2016motivic} for a proof). 

\subsubsection{Constructible families of Schwartz-Bruhat functions}
Until the end of this subsection we follow \cite[Chapter 5]{bilu2018motivic}.
We will say that a function $X\overset{f}{\to} \mathbf Z$ on a $k$-variety is \textit{constructible} if the inverse image of any integer  by $f$ is a constructible subset of $X$.

Again $C$ is a smooth projective connected curve over an algebraically closed field $k$ of characteristic zero. Let $M,N: C \to \mathbf Z $ be constructible functions such that $M\leqslant N$. Such functions are constant respectively equal to $M_0$ and $N_0$ on a dense open subset $U$ of the curve $C$. 
Then $\mathbf A_C ^{( M , N)}$ stands for the $C$-variety isomorphic to $U\times \mathbf A_k^{(M_0 , N_0)}$ over $U$ and with fibre over $u\notin U$ equal to $\mathbf A_k^{(M_u , N_u )}$. Let $n$ be a positive integer. 
One can as well define the $n$-th product over $C$
\[
\mathbf A_C^{n(M,N)} = \underbrace{\mathbf A_C ^{( M , N)} \times_C ... \times_C \mathbf A_C ^{( M , N)}}_{n\text{ times}} . 
\]
Until the end of this section we fix two constructible functions 
$a , b: C \to \mathbf Z$ such that $a \leqslant 0 \leqslant b$ and we assume that there exists a dense open subset $U$ of $C$ such that $a_{|U} = b_{|U} = 0$. Let $\Sigma$ be the complement of $U$ in $C$, it is a finite set of points. 
Let 
$M=(M_\iota )$ and $N=(N_\iota ) $ be families of non-negatives integers indexed by $\NN^p$, such that $M_\mathbf 0 = N_\mathbf 0 = 0$. This data provides a family 
\[
\AAA_{\nn} ( a , b, M , N ) = S^{\nn}\left ( \left ( \mathbf A_C^{n ( a - M_\iota , b + N_\iota )  } \right )_{\iota \in \NN^p}  \right )
\]
for any $p$-tuple $\nn$ of non-negative integers, which is a variety over $S^\nn C$. 
Let us quickly describe the fibre of $  \AAA_{\nn} ( a , b, M , N ) \to S^\nn C $ over a schematic point $D\in S^{\nn}C$. The latter can be decomposed formally as
\[
D = \sum_{v\in C } \iota_v  v  =  \sum_{v\in U } \iota_v  v  +  \sum_{v \in \Sigma } \iota_v v = D_U + D_\Sigma 
\]
where the $\iota_v $ are $p$-tubles of non-negative integers (almost all equal to $\mathbf 0 \in \NN^p$), \[ 
D_U = \sum_{v\in U } \iota_v  v  \in S^\pi U
\] 
for some partition $\pi = ( n_\iota^U )_{\iota \in \NN^p} $ of some $\nn^U \leqslant \nn$ and ${D_\Sigma =\sum_{v \in \Sigma } \iota_v  v  }$. Then, the fibre over $D$ can be seen as the domain of definition of a Schwartz-Bruhat function $\Phi_D$, up to a finite extension of the residue field $\kappa ( D ) $ of $D$ (seen as a point of $S^\mathbf n C$) \cite[\S 5.3.2]{bilu2018motivic}: it is of the form 
\begin{equation} \label{equation:fibre-of-An-SnC}
\prod_{\iota \in \NN^p} \mathbf A_{\kappa ( D ) }^{ n_\iota^U \times n   ( M_\iota +  N_\iota )  }  \times_{\kappa ( D ) } \prod_{\substack{ v\in \Sigma \\ \mathbf i_v \neq \mathbf 0 }} \mathbf A_{\kappa ( D )} ^{n ( a_ v - M_{\mathbf i_v}, b_v  + N_{\mathbf i_v} )  } .
\end{equation}
Note that the first product is actually finite, since $( n_\iota^U )_{\iota \in \NN^p} $ has finite support as a partition. 

For this reason, elements of $\ExpM_{\AAA_{\nn} ( a , b, M , N ) }$ are called \textit{constructible families of Schwartz-Bruhat functions of level} $\nn$ \cite[\S 5.3.4.1]{bilu2018motivic}. 
Two special cases arise. On one hand, if all the integers $N_\iota $ are zero, the family is said to be \textit{uniformly smooth}. On the other hand, if all the integers $M_\iota $ are zero, the family is said to be \textit{uniformly compactly supported} \cite[\S 5.3.5.1]{bilu2018motivic}.

\subsubsection{Motivic Fourier transform in families}
\label{subsubsection:motivic-fourier-transform-in-families}
It is possible to define the Fourier transform of a family $\Phi \in \ExpM_{\AAA_{\nn} ( a , b, M , N )} $ of Schwartz-Bruhat functions, independently of the choice of $b $ and $N$. Let us fix a non-zero rational differential form $\omega \in \Omega_{k(C)/k}$ and define $\nu$ by $\nu_v = - \ord_v \omega $ for every closed point $v\in C$. 
This allows one to define a constructible Fourier kernel morphism 
\[
\mathbf A_C^{n(a - M_\iota , b + N_\iota )} \times_C \mathbf A_C^{n(\nu - b - N_\iota , \nu- a + M_\iota )} \to \mathbf A^1
\]
which by (\ref{equation:iso-Spi(XXXtimesXXX)}) induces morphisms on the symmetric products 
\[
r_{\nn}  : \AAA_{\nn} ( a , b, M , N ) \times_{S^{\nn}C} \AAA_{\nn} ( v - b , v - a , N , M ) \to \mathbf A^1 
\]
hence an element of $\ExpM_{\AAA_{\nn} ( a , b, M , N ) \times_{S^{\nn}C} \AAA_{\nn} ( v - b , v - a , N , M )}$ for any $\nn \in \NN^p$
\cite[\S 5.4.2]{bilu2018motivic}. 
Then one can show that the class $[S^{\nn}(( \mathbf A_C^{n(b , N_\iota)} ) _{\iota \in \NN^p} )]$ has an inverse in $\ExpM_{S^\nn C}$. We finally define a motivic Fourier transform 
\[
\mathscr F :   \ExpM_{\AAA_{\nn} ( a , b, M , N )} \longrightarrow  \ExpM_{\AAA_{\nn} ( \nu - b , \nu - a  , N , M )} 
\]
given for any $\Phi  = [V,f]\in \ExpM_{\AAA_{\nn} ( a , b, M , N )}$, with $g: V \to \AAA_{\nn} ( a , b , M , N )$ the structure morphism, by the class in  $\ExpM_{\AAA_{\nn} ( \nu - b , \nu - a, N , M )}$ 
\[
[V \times_{S^{\nn}C} \AAA_{\nn} ( \nu - b , \nu - a , N, M ) , f \circ \pr_1 + r_{\nn} ( g \circ \pr_1 \cdot \pr_2)] [S^{\nn}(( \mathbf A_C^{n(b , N_\iota)} ) _{\iota \in \NN^p} )]^{-1}
\]
where $\cdot $ denotes the product in the ring $\ExpM_{\AAA_{\nn} ( a , b, M , N ) \times_{S^{\nn}C} \AAA_{\nn} ( \nu - b, \nu - a, N, M )}$ \cite[\S 5.4.3]{bilu2018motivic}. 
This Fourier transform is independent of the choice of $ b  $ and $N$, and is compatible with symmetric products \cite[Proposition 5.4.4.2]{bilu2018motivic}.
It generalises the local and global Fourier transforms of a Schwartz-Bruhat function we presented in the first paragraph of this subsection. 

\subsubsection{Local Poisson formula}

Let $D_U'$ be a pull-back of $D_U \in S^\pi U$  via the quotient map
\[
\left (\prod_{\iota \in \NN^p \setminus \{ \mathbf 0 \}} U^{m_\iota} \right )_* \longrightarrow S^\pi U 
\]
defining $S^\pi U$, and $v_{\iota , j} $ the projection of $D_U '$ on the $j$-th copy of $U$ in $U^{m_\iota}$. A different choice of $D_U'$ only permutes the order of the $v_{\iota , j}  $'s. 
Let $E_D$ be the effective $\kappa ( D )$-zero-cycle on the curve $C_{\kappa ( D)}$ defined by 
\[
E_D = \sum_{\iota \in \NN^p \setminus \{0\} } M_\iota ( v_{\iota , 1 } + ... + v_{\iota , m_\iota}) - \sum_{v\in \Sigma } ( a_v - M_{\iota_v} ) v .
\]
The divisor $E_D$ does not depend on the choice of $D_U'$ since it is invariant under the action of $\prod_{\iota \in \mathbf N^p \setminus \{ 0 \}} \mathfrak{S}_{m_\iota} $ on the product $\prod_{\iota \in \mathbf N^p \setminus \{ 0 \}}  U^{m_\iota} $  \cite[Remark 5.5.1.1]{bilu2018motivic}. 
It can be rewritten as 
\[
\sum_{v\in C} \left ( M_{\iota_v} - a_v \right ) v 
\]
since $a$ is zero on $U$. 
One then defines a constructible morphism of $\kappa ( D ) $-varieties 
\[
\theta_D : L_{\kappa ( D )} ( E_D )^n  \longrightarrow \AAA_{\nn} ( a , b, M , N )_D 
\]
where $ L_{\kappa ( D )} ( E_D )$ is the Riemann-Roch space over $E_D$
 \[
  L_{\kappa ( D )} ( E_D ) 
  = \Gamma (  C_{\kappa ( D ) } , \mathscr O_{C_{\kappa ( D ) }} ( E_D ) ) .
 \]
Pointwise, the morphism $\theta_D $ sends a function $f \in   L_{\kappa ( D )} ( E_D ) $ to its $v$-adic expansions in the ranges given by the exponents defining $\AAA_{\nn} ( a , b, M , N )_D$ in \ref{equation:fibre-of-An-SnC}.  For example, its image in 
\[
\prod_{v\in \Sigma}  \left ( \mathfrak{ m }_v^{a_v - M_{\iota_v}} \mathcal O_v /  \mathfrak{ m }_v^{b_v + N_{\iota_v}} \mathcal O_v \right )
\]
may be understood as the coefficients of its $v$-adic expansion in the range $a_v - M_{\iota _v} , ... , b_v + N_v - 1 $ (see \cite[page 164]{bilu2018motivic} for an explicit definition).

The \textit{summation over rational points} of $\Phi_D$ is by definition the class in $\ExpM_{\kappa ( D)}$ of
\begin{equation}\label{def:summation-rational-points}
\sum_{x\in \kappa ( D ) ( C ) ^n } \Phi_D ( x ) = \sum_{x\in L_{\kappa ( D ) } ( E_D) ^n}  ( \theta_D^* \Phi_D ) (x) . 
\end{equation} 
Here the sum of the right side is the exponential sum notation (\ref{def-equation:exponential-sum-notation}) corresponding to the morphism
\[
\ExpM_{L_{\kappa ( D ) } (E_D)^n} \longrightarrow \ExpM_{\kappa ( D) } 
\]
induced by the projection on $\kappa ( D) $. 

For a proof of the following, see \cite[Theorem 1.3.10]{chambert2016motivic} and \cite[Lemma 5.5.1.4]{bilu2018motivic}. 
\begin{myptn}[Poisson formula] Let $D $ be a schematic point of $S^{\nn}C$ and $\Phi_D $ an element of the fibre $  \ExpM_{\AAA_{\nn} ( a , b, M , N )_D }$. Then 
\[
\sum_{x\in \kappa ( D)  ( C ) ^n } \Phi_D ( x )  = \LL^{(1-g)n} \sum_{x\in \kappa ( D ) ( C ) ^n } \mathscr F \Phi_D ( x ) .
\]
\end{myptn}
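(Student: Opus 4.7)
The plan is to imitate, in the motivic setting, the classical adelic proof of the Poisson summation formula over a function field, the only new subtleties being (i) that sums become classes in $\ExpM$, (ii) that the Fourier kernel is realised via the residue pairing $r_v(\cdot\omega)$, and (iii) that volume factors are powers of $\LL$ coming from Riemann--Roch.

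First, I would make the statement concrete at the fibre over $D$. By \eqref{equation:fibre-of-An-SnC}, the fibre $\AAA_{\nn}(a,b,M,N)_D$ is a finite product of affine spaces
\[
\mathbf A_{\kappa(D)}^{n(a_v - M_{\iota_v},\,b_v + N_{\iota_v})}
\]
which is exactly a truncated product of local expansion ranges $\mathfrak m_v^{a_v - M_{\iota_v}}\mathcal O_v/\mathfrak m_v^{b_v + N_{\iota_v}}\mathcal O_v$. The map $\theta_D$ sends a section of $\mathscr O_{C_{\kappa(D)}}(E_D)$ to its tuple of $v$-adic expansions. The morphism $\theta_D$ is injective because an element of $L_{\kappa(D)}(E_D)$ that vanishes in every truncated range up to order $b_v + N_{\iota_v}$ would lie in $L(E_D - \sum_v (b_v + N_{\iota_v})v)$, which is trivial once the $N_{\iota_v}$ are large enough; in the reverse direction, the image can be identified as the exact kernel of a global-to-local obstruction map whose cokernel has motivic class $\LL^{n\cdot h^1(\mathscr O_C(E_D))}$.

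Second, I would set up the dual picture. The Fourier transform of $\Phi_D$ lives over $\AAA_{\nn}(\nu - b,\nu - a,N,M)_D$, whose fibre is built from the dual truncation ranges. The associated Riemann--Roch divisor is $E_D^\vee$ with $E_D + E_D^\vee = K$, where $K = -\mathrm{div}(\omega)$ is the canonical divisor of $C_{\kappa(D)}$. The residue pairing $\sum_v r_v(xy) = \sum_v \mathrm{res}_v(xy\omega)$ identifies $L(K - E_D)^n$ with the exact annihilator of the image of $\theta_D$ inside the truncated adelic product; this is the motivic manifestation of Serre duality. Consequently, the morphism $\theta_D^\vee$ for $\mathscr F\Phi_D$ factors through a subvariety orthogonal to $\theta_D(L_{\kappa(D)}(E_D)^n)$ under the Fourier kernel.

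Third, the Poisson identity reduces to a finite-dimensional linear-algebra computation over $\kappa(D)$. For any $k$-linear subspace $V \subset W$ of a finite-dimensional $k$-vector space, with Fourier kernel valued in a non-degenerate bilinear pairing $W \times W^\vee \to \mathbf A^1$, and for any variety-with-exponential $[X,f]$ on $W$, the identity
\[
\sum_{x\in V} [X,f](x) \;=\; \LL^{\dim V - \dim W}\!\!\sum_{y\in V^\perp} \mathscr F[X,f](y)
\]
follows directly from the definition of $\mathscr F$, the identity $\sum_{v\in V}[V,\psi]_V = \LL^{\dim V}\cdot \mathbf 1_{V^\perp}$ for any linear form $\psi$ (which is either identically zero on $V$ or surjective), and the defining scissors/product relations in $\ExpM_k$. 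Applying this to $V = \theta_D(L_{\kappa(D)}(E_D)^n)$, $V^\perp = \theta_D^\vee(L_{\kappa(D)}(K - E_D)^n)$, and using $\dim L(E_D) - \dim L(K - E_D) = \deg E_D + 1 - g$ from Riemann--Roch, the discrepancy between the two sides collapses to the asserted factor $\LL^{(1-g)n}$ after cancellation with the normalising class $[S^{\nn}((\mathbf A_C^{n(b,N_\iota)})_\iota)]^{-1}$ built into the definition of $\mathscr F$.

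The main obstacle is the bookkeeping at step three: one must verify that the residue theorem $\sum_v \mathrm{res}_v(f\omega) = 0$ really does give the orthogonality of $\theta_D$ and $\theta_D^\vee$ inside the truncated pieces, and that the motivic volume factors match exactly. In particular, one has to check that the Fourier kernel $r_{\nn}$, as it appears after passing to symmetric products via \eqref{equation:iso-Spi(XXXtimesXXX)}, is computed by the sum of local residue pairings at the constituent places, so that the global vanishing of residues translates into the invariance of $\theta_D^*\Phi_D$ under translation by elements of $L_{\kappa(D)}(K - E_D)^n$ on the Fourier side. Once this compatibility is in place, the argument is essentially the one of \cite[Theorem 1.3.10]{chambert2016motivic}, and the extension to the constructible Schwartz--Bruhat level data of \cite[Chapter 5]{bilu2018motivic} follows.
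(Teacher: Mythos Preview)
The paper does not supply its own proof of this proposition, instead deferring to \cite[Theorem~1.3.10]{chambert2016motivic} and \cite[Lemma~5.5.1.4]{bilu2018motivic}; your sketch reproduces exactly the argument of those references --- finite-dimensional motivic Poisson on the truncated adelic piece, combined with Riemann--Roch, Serre duality, and the residue theorem to identify the annihilator and compute the volume factor. One small slip: the canonical divisor is $K=\mathrm{div}(\omega)$, not $-\mathrm{div}(\omega)$ (the paper's convention $\nu_v=-\ord_v\omega$ gives $K=-\sum_v\nu_v v$); once that is corrected and you trace the normalisation $\LL^{-Nn}$ built into $\mathscr F$ through the dimension count so that the $\deg E_D$-dependent terms cancel, the bookkeeping you flag in your last paragraph goes through as in the cited sources.
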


\subsubsection{Poisson formula in families}
\label{paragraph:poisson-families} 
If the family $\Phi$ is uniformly compactly supported, the zero cycle $E_D$ is actually equal to the $k$-zero-cycle $D_a = - \sum_v a_v v $ for any schematic point $D \in S^\nn C $. Its Riemann-Roch space over $k$ is by definition 
\[
L(D_a ) = \{ f \in k ( C ) \mid f = 0 \text{ or } \ddiv (f) +  D_a \geqslant 0 \}.
\]
By flat base change, there is a $\kappa ( D ) $-linear canonical isomorphism $L_{\kappa ( D ) } ( D_a ) \simeq L(D_a ) \otimes_k \kappa ( D )$. 
Then one can prove the existence of a constructible morphism
\[
\theta_{\nn} : L ( D_a ) \times S^{\nn}C \longrightarrow \AAA_{\nn} ( a , b, 0 , N ) 
\]
over $S^{\nn}C $, whose restriction to the fibres above a schematic point $D \in S^{\nn} C$ induces $\theta_D$ \cite[\S 5.5.2.1]{bilu2018motivic}.
Given an uniformly compactly supported family $\Phi \in \ExpM_{\AAA_{\nn} ( a , b, 0 , N ) } $, 
the image in $\ExpM_{S^{\nn}C} $ of the pullback $\theta_{\nn} ^* \Phi$
is called the \textit{uniform summation over rational points}. 
It is denoted by 
\[
\left ( \sum_{x\in \kappa (D) ( C ) ^n } \Phi_D ( x ) \right )_{D\in S^{\nn}C} .
\]
This is an example of what Bilu calls an \textit{uniformly summable family}, that is, a constructible family of functions $\Phi \in\AAA_{\nn} ( a , b, M , N )  $ such that there exists a function $\Sigma \in \ExpM_{S^{\nn}C} $ on $S^{\nn}C$ whose pullback $D^* \Sigma$ in $\ExpM_{\kappa ( D )}$ is the sum over rational points
$\sum_{x\in \kappa (D) ( C ) ^n} \Phi_D ( x ) $, for any $D\in S^{\nn}( C ) $. 

Now one may remark that there is a commutative diagram of group morphisms
\begin{center}
\begin{tikzcd}
\ExpM_{L(D_\alpha)^n \times S^{\nn} C}  \arrow[r] \arrow[d] & \ExpM_{S^{\nn} C} \arrow[d]\\
\ExpM_{L(D_\alpha)^n }  \arrow[r] & \ExpM_k
\end{tikzcd}
\end{center}
which means that it is possible to permute the sums as follows: 
\[
\sum_{D\in S^{\nn} C } \sum_{x\in \kappa ( D ) ( C ) ^n } \Phi_D ( x )  = \sum_{x \in k(C)^n } \sum_{D\in S^{\nn} C } \Phi_D ( x ) .
\]
\begin{myptn}[Poisson formula in families]
 If $\Phi $ is an uniformly smooth family of Schwartz-Bruhat functions, then its Fourier transform $\mathscr F \Phi $ is  uniformly compactly supported and one has  
\[
\sum_{D\in S^{\nn}C } \sum_{x\in \kappa ( D ) ( C ) ^n } \Phi_D ( x ) = \LL^{(1-g)n} \sum_{y\in k ( C ) ^n} \sum_{D\in S^{\nn}C } \mathscr F \Phi_D ( y )  .
\]
in $\ExpM_k$.
\end{myptn}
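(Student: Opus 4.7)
My plan is to reduce the statement to the fibrewise local Poisson formula already cited, using the uniform summation machinery in two successive steps. The first claim, that $\mathscr{F}\Phi$ is uniformly compactly supported whenever $\Phi$ is uniformly smooth, is essentially a bookkeeping observation on levels: by construction, the Fourier transform sends a class in $\ExpM_{\AAA_{\nn}(a,b,M,N)}$ to a class in $\ExpM_{\AAA_{\nn}(\nu-b,\nu-a,N,M)}$ (see \S\ref{subsubsection:motivic-fourier-transform-in-families}). Specialising to the uniformly smooth case $N_\iota = 0$ for every $\iota$, the image lives in $\ExpM_{\AAA_{\nn}(\nu-b,\nu-a,0,M)}$, which by definition is the uniformly compactly supported case (with supports of the form $\mathfrak{m}_v^{\nu_v - b_v - M_{\iota_v}} \mathcal{O}_v$, controlled only by the constructible function $\nu - b$ and independent of the fibre).

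Next, I want to lift the pointwise Poisson formula to an identity in $\ExpM_{S^{\nn}C}$. Because $\Phi$ is uniformly smooth, the machinery of \S\ref{paragraph:poisson-families} produces a constructible morphism $\theta_{\nn} : L(D_a) \times S^{\nn}C \to \AAA_{\nn}(a,b,0,N)$ realising the uniform summation, and the same machinery applied to $\mathscr{F}\Phi$ (which is uniformly compactly supported, by the previous paragraph) produces an analogous uniform summation morphism $\widetilde{\theta}_{\nn}$. Thus both members of the proposed equality are classes in $\ExpM_{S^{\nn}C}$. By the local Poisson formula (the proposition immediately preceding the statement to be proved), their pullbacks to any schematic point $D \in S^{\nn}C$ coincide in $\ExpM_{\kappa(D)}$. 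A direct limit/constructibility argument, valid because both objects are constructed functorially from the constructible data defining $\Phi$, then upgrades the fibrewise equality to an equality of classes in $\ExpM_{S^{\nn}C}$.

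Finally, I push the identity forward along the structural morphism $S^{\nn}C \to \Spec(k)$ to obtain an equality in $\ExpM_k$. On the left this yields $\sum_{D \in S^{\nn}C} \sum_{x \in \kappa(D)(C)^n} \Phi_D(x)$ by definition of the uniform summation. On the right, applying the functorial pushforward $\ExpM_{L(D_a)^n \times S^{\nn}C} \to \ExpM_k$ and invoking the commutative diagram recalled just before the statement, I can interchange the summations, writing
\[
\sum_{D \in S^{\nn}C} \sum_{y \in \kappa(D)(C)^n} \mathscr{F}\Phi_D(y) \;=\; \sum_{y \in k(C)^n} \sum_{D \in S^{\nn}C} \mathscr{F}\Phi_D(y).
\]
Multiplying by the constant factor $\LL^{(1-g)n}$ coming from the local Poisson formula yields the desired identity.

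The step I expect to be the most delicate is the promotion of the pointwise Poisson equality to an equality of classes in $\ExpM_{S^{\nn}C}$: one has to check that the construction of $\theta_{\nn}$, the Fourier kernel morphism $r_{\nn}$ and the inverse of the class $[S^{\nn}(( \mathbf{A}_C^{n(b,N_\iota)})_\iota)]$ all glue into morphisms of $S^{\nn}C$-schemes whose fibrewise evaluation recovers exactly the local Poisson identity. Once this is verified, the rest of the argument is formal manipulation of pushforwards.
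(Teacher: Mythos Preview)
The paper does not supply a proof of this proposition; it is stated as part of the machinery imported from \cite[Chapter 5]{bilu2018motivic}. Your overall strategy---fibrewise local Poisson formula, then pushforward along $S^{\nn}C \to \Spec(k)$, then the sum interchange via the commutative diagram---is the natural one and matches the spirit of the surrounding text.

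There is, however, a genuine confusion in your second paragraph. You claim that because $\Phi$ is uniformly smooth, the construction of \S\ref{paragraph:poisson-families} yields a morphism $\theta_{\nn} : L(D_a) \times S^{\nn}C \to \AAA_{\nn}(a,b,0,N)$ realising the uniform summation of $\Phi$. But uniform smoothness means $N=0$, so $\Phi$ lives in $\ExpM_{\AAA_{\nn}(a,b,M,0)}$, and the divisor $E_D = \sum_v (M_{\iota_v} - a_v)[v]$ genuinely depends on $D$ through the $M_{\iota_v}$. The construction of $\theta_{\nn}$ in \S\ref{paragraph:poisson-families} requires $M=0$ (uniform compact support), which is the condition satisfied by $\mathscr{F}\Phi$, \emph{not} by $\Phi$. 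So within the framework recalled in the paper, uniform summation is directly available only on the Fourier side.

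The fix is to reverse the logic: first form the class $\Sigma = \LL^{(1-g)n}\cdot\bigl(\text{uniform summation of }\mathscr{F}\Phi\bigr)$ in $\ExpM_{S^{\nn}C}$, which is legitimate since $\mathscr{F}\Phi$ is uniformly compactly supported; then use the fibrewise local Poisson formula to verify that $D^*\Sigma = \sum_{x\in\kappa(D)(C)^n}\Phi_D(x)$ for every schematic point $D$. This is exactly the definition of $\Phi$ being uniformly summable (as recalled just before the proposition), and its pushforward to $\ExpM_k$ gives the left-hand side. Your final paragraph on the sum interchange is then correct as written.
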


\subsection{Arc schemes and integrals of motivic residual functions}
In this paragraph we collect a few facts about jet schemes, arc schemes and a particular case of motivic measure on such spaces. A concise reference for this paragraph could be \cite{blickle2005short} or \cite{craw2004introduction}. A more exhaustive one is the third chapter of \cite{chambert2018motivic}. 

\subsubsection{Jet schemes, arc schemes and integration} The field $k$ is an algebraically closed field of characteristic zero and $\mathscr Y$ is a flat scheme of finite type over $k[[t]]$, which we assume to be equidimensional of relative dimension $n$. A simple example is given by $\mathscr Y = Y \times_{\Spec k } \Spec ( k [[t]] ) $ for some $k$-variety $Y$. 
For any non-negative integer $m$, the jet-scheme $\mathscr L_m ( \mathscr Y ) $ of order $m$ of $\mathscr Y$ is the $k$-variety representing the functor 
\[
A \mapsto \Hom_k ( \Spec ( A[[t]] / (t^{m+1} )) , \mathscr Y ) 
\]
on the category of $k$-algebras.
There are canonical affine projection morphisms 
\[ 
p^{m+1}_m : \mathscr L_{m+1} ( \mathscr Y ) \rightarrow \mathscr L_m ( \mathscr Y )
\] and one can consider the proscheme over $k$ 
\[
\mathscr L ( \mathscr Y ) = \underset{\leftarrow}{\lim}_m \mathscr L_m ( \mathscr Y ) 
\] of arcs on $\mathscr Y $. This projective limit carries canonical projections $p_m : \mathscr L ( \mathscr Y ) \to \mathscr L_m ( \mathscr Y ) $.

The projections $p^{m+1}_m $ induce ring morphisms 
\[
\left (  p^{m+1}_m \right )^* : \ExpM_{\mathscr L_m ( \mathscr Y ) } \to \ExpM_{\mathscr L_{m+1} ( \mathscr Y ) }
\]between the corresponding localised Grothendieck rings with exponential. The ring of motivic residual functions on $\mathscr L ( \mathscr Y ) $ is defined as the direct limit $ \underset{\rightarrow}{\lim}_m \ExpM_{\mathscr L_m ( \mathscr Y )}  $. 
\begin{mydef}
Let $h$ be a motivic residual function, which we assume to be of the form $[H,f]_{\mathscr L_m ( \mathscr Y ) }$ with $H$ a variety over $\mathscr L_m ( \mathscr Y)$ for some $m$ and $f : H \to \mathbf A^1 $ a morphism. 
The integral of $h$ over $\mathscr L ( \mathscr Y ) $ is the element of $\ExpM_k$ 
\[
\int_{\mathscr L ( \mathscr Y ) } h ( x )  \dd x  = \LL^{-(m+1)n} [H , f]_k .
\]
\end{mydef}
Since the projection $\mathscr L_{m'} ( \mathscr Y ) \to \mathscr L_m ( \mathscr Y ) $ is a locally trivial fibration with fibres isomorphic to $\mathbf A^{(m'-m)n}$ for any $m' \geqslant m$ \cite[Proposition 3.7.5]{chambert2018motivic}, this definition does not depend on the choice of $m$. 
\subsubsection{Motivic volumes}\label{subsubsection:motivic-volume} An useful particular case of such integrals is given by the characteristic function of a constructible subset $W=p_m^{-1} ( W_m ) $ of $\LLL ( \YYY ) $. The integral of $\mathbf 1_W$ over $\LLL ( \YYY ) $ is by definition the \textit{volume}  $\vol ( W ) $ of $W$. There are three particular volumes of interest for our purpose. The first one is the volume of the whole arc space $\LLL ( \YYY ) = p_0^{-1} ( \YYY_k )$, which is 
\[
\vol ( \LLL ( \YYY )) = \LL^{-n} [ \YYY_k , 0] .
\]
Then the volume of the subspace $W = p_0^{-1} ( \{ 0 \} ) = \LLL ( \mathbf A^1 , 0 ) $ of arcs in $\mathbf A^1$ with origin at $0$ is 
\[
\vol ( \LLL ( \mathbf A^1 , 0 ) )= \LL^{-1} [ \{ 0 \} , 0] = \LL^{-1} .
\]
Finally, an arc  on $\mathbf A^1$ of order zero at $\{ 0 \}$ in an arc whose image in $\mathbf A^1$ do not belongs to $\{ 0 \}$, thus the corresponding volume is $1-\LL^{-1}$. 
More generaly, the set of arcs of order $m\in \NN$ at $\{ 0 \}$ has volume $(1-\LL^{-1})\LL^{-m}$. 

\subsubsection{Weights and volumes}\label{inequ-weight-integral-volume} The weight function of \cite{bilu2018motivic} we will introduce in the next section satisfies the following property 
 \cite[Remark 6.3.1.3]{bilu2018motivic}: for any constructible subset $W$ of $\LLL  ( \YYY ) $ and motivic residual function $h$ we have the inequality
\[
w \left ( \int_{\LLL ( \YYY )} \mathbf 1_W ( x )   h ( x)  d x \right ) \leqslant w ( \vol ( W ) ) . 
\]


\section{Convergence of Motivic Euler products}
The aim of this section is to recall the main properties of the weight function on the Grothendieck ring $\ExpM_X$ introduced by Bilu \cite[Chapter 4]{bilu2018motivic} and to give an effective criterion, Proposition \ref{proposition:linear-conv-criterion}, 
ensuring the weight-linear convergence, in the corresponding completed ring, of multivariate motivic Euler products. We also include Lemma \ref{lemma:weight-convergence-product-stability} and Lemma \ref{lemma:negligible-terms}, two lemmas
dealing respectively with multiplicativity of weight-linear convergence and what one might call negligible convolution products. 

\subsection{Weight filtration}
We refer to Lemmas 4.5.1.3, 4.6.2.1, 4.6.3.1 and 4.6.3.4 of \cite{bilu2018motivic}  for a proof of the following. 
  
\begin{myptn}\label{proposition:properties-of-the-weight}
 Let $X$ be a complex variety. There exists a weight function 
\[
w_X : \ExpM_X \to \mathbf Z \cup \{ - \infty \} 
\] satisfying the following properties.
\begin{enumerate}
\item $w_X ( 0 ) = - \infty  $
\item $w_X ( \mathfrak a + \mathfrak a '  ) \leqslant \max ( w_X ( \mathfrak a ), w_X (   \mathfrak a ' ) )$ with equality if  $w_X ( \mathfrak a ) \neq w_X (   \mathfrak a ' ) $, for any $\mathfrak{a}, \mathfrak{a}' \in \ExpM_X$. 
\item If $\AAA = ( \mathfrak a_i ) _{i\in I} $ is a family of $\ExpM_X$ indexed by a set $I$ and $\pi = ( n_i )_{i\in I } \in \NN^{(I)}$ then
\[
w_{S^\pi X} ( S^\pi \AAA ) \leqslant \sum_{i\in I} n_i w_X ( \mathfrak a _i ) .
\]
\item If $Y \to X$ is a variety over $X$ then 
\[
w_X  ( Y ) = 2 \dim_X ( Y ) + \dim ( X ) . 
\] 
\item If $p : Y \to X$ and $q : Z \to X $ are smooth morphisms with fibres of dimension $d\geqslant 0$ and $Y,Z$ irreducible, then
\[
w_X (  [Y \overset{p}{\to} X ] - [Z \overset{q}{\to} X ]) \leqslant 2 d + \dim X - 1 . 
\]
\end{enumerate}
\end{myptn}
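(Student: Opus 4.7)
The plan is to define $w_X$ through a Hodge-theoretic realization of $\ExpM_X$ into the Grothendieck group of monodromic mixed Hodge modules on $X$. One first extends the classical realization $\KVar_X \to K_0(\MHM(X))$, which sends $[Y \to X]$ to the class of the relative cohomology with values in the constant Hodge module on $Y$, to a realization $\ExpM_X \to K_0(\MHMmon(X))$; the exponential factor $[U,f]$ is accounted for by vanishing cycles along $f: U \to \mathbf{A}^1$. One then sets $w_X(\mathfrak{a})$ to be the largest integer $w$ for which the $w$-th graded piece of the weight filtration on the realization of $\mathfrak{a}$ is non-zero, with $w_X(0) = -\infty$.

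Properties (1) and (2) become tautological: the weight is the maximum of a $\ZZ$-indexed support, so it satisfies the sub-maximum inequality under sums, with equality precisely when the top pieces do not cancel. Property (4) follows from the classical computation of weights of varieties: for $Y \to X$ of relative dimension $d$ (so $Y$ has dimension $d + \dim X$), Deligne's theorem identifies the top-weight piece of its direct image with the top-degree cohomology of a smooth compactification, contributing weight $2d + \dim X$. For property (5), one observes that when $p:Y\to X$ and $q:Z\to X$ are smooth with irreducible source and fibres of dimension $d$, the top-weight graded pieces of $p_!$ and $q_!$ applied to the respective constant Hodge modules agree up to a Tate twist (both give the canonical top-degree contribution determined by the fibre dimension), so they cancel in the difference $[Y\to X] - [Z\to X]$, leaving a residual class of weight at most $2d + \dim X - 1$.

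The main obstacle is property (3), the weight bound for symmetric products. The strategy is to first prove the bound for a single class in the family, namely $w_{S^n X}(S^n \mathfrak{a}) \leqslant n\, w_X(\mathfrak{a})$. This requires a compatible notion of symmetric product in the Hodge-theoretic target: one realizes $\mathfrak{a}^{\boxtimes n}$ on $X^n$, restricts to the open locus $(X^n)_*$ where coordinates are pairwise distinct, and passes to the $\mathfrak{S}_n$-invariants to land on $S^n X$. Weights of external tensor products behave additively, and the coinvariant construction under a finite group does not raise the top weight; this yields the single-variable inequality. The general case for a family $\AAA = (\mathfrak{a}_i)_{i\in I}$ and a partition $\pi = (n_i)$ then follows from the isomorphism $(\ref{equation:iso-Spi(XXXtimesXXX)})$ expressing $S^\pi\AAA$ as a fibre product of the $S^{n_i}\mathfrak{a}_i$, combined with additivity of weights on external products.

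The delicate technical core is checking that the symmetric-product construction is compatible with both the monodromic (exponential) structure and the weight filtration \emph{simultaneously}, which is what constitutes the bulk of the work. A secondary subtlety is that the family $I$ may be infinite, so one must verify that the bound $\sum_{i\in I} n_i w_X(\mathfrak{a}_i)$ in (3) makes sense and is uniform; this is handled by the finite support of $\pi$, but one should check at each step that only finitely many factors $\mathfrak{a}_i$ with $n_i > 0$ contribute to the weight of $S^\pi \AAA$.
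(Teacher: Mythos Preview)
The paper does not actually prove this proposition: it simply cites Lemmas 4.5.1.3, 4.6.2.1, 4.6.3.1 and 4.6.3.4 of Bilu's thesis \cite{bilu2018motivic} for the construction and all five properties. Your outline is a faithful sketch of what happens there: the weight is defined via a realization $\ExpM_X \to K_0(\MHMmon(X))$ built from vanishing cycles, properties (1), (2), (4), (5) follow from standard facts about weights in mixed Hodge theory, and the symmetric-product bound (3) is the genuinely hard part, established in Bilu's Chapter~4 by checking compatibility of the monodromic Hodge realization with symmetric products. So there is nothing to compare against in this paper, but your proposal correctly anticipates the approach of the cited reference.
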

If $X=\Spec ( \mathbf C )$, the weight function $w_X$ will be simply written $w$.  
It induces a filtration $(W_{\leqslant n} \ExpM _X)_{n\in \mathbf Z}$ on $\ExpM_X$ given by 
\[
W_{\leqslant n} \ExpM _X =  \{ \mathfrak a \in \ExpM_X \mid w_X ( \mathfrak a ) \leqslant n \}
\]
for all $n\in \mathbf Z$.  
\begin{mydef}
The completion of $\ExpM _X$ with respect to the weight topology is
\[
\widehat{\ExpM _X} = \underset{\underset{n}{\leftarrow}}{\lim} \left ( \ExpM _X / W_{\leqslant n} \ExpM _X \right ). 
\]
\end{mydef}

\subsection{Convergence criteria} 

\begin{mydef}
Let $F(T)=\sum_{i\geqslant 0} X_i T^i $ be a formal power series with coefficients in $\ExpM _X $. The radius of convergence of $F$ is defined by 
\[
\sigma_F = \limsup_{i\geqslant 1} \frac{w_X (X_i)}{2i}.
\]	
We say that $F$ \textit{converges for} $|T|<\LL^{-r}$ if $r\geqslant \sigma_F$.
\end{mydef}

This terminology can be explained by the fact that if $F$ converges for $|T|<\LL^{-r}$
then for all $\mathfrak a \in \ExpM _\mathbf C$ with weight 
$w(\mathfrak a ) < -2r$
the evaluation $F(\mathfrak a)$ exists as an element of $\widehat{\ExpM}_X$. Indeed, by \cite[Lemma 4.5.1.3]{bilu2018motivic} one has
\[ 
w_X ( X_i \boxtimes \mathfrak a^i  ) \leqslant i (2 r + w ( \mathfrak{a} ) ) 
\]
for all $i$ sufficiently large, so $F(\mathfrak a ) = \sum_{i\geqslant 0} X_i \boxtimes \mathfrak{ a } ^i $ can be thought as an element of $\widehat{\ExpM}_X$ by considering partial sums with respect to the weight (here $\boxtimes$ denotes the exterior product defined in \S \ref{paragraph:ring-of-varieties}).

In what follows we will say that a formal series $\sum_{i\geqslant 0} \mathfrak{c}_i T^i $ with terms in $\ExpM_X$  converges \textit{weight-linearly} at $\mathfrak a \in \ExpM_\mathbf C$ if there exists $\delta > 0$ such that $w_X ( \mathfrak{c}_i \mathfrak a^i ) < -  \delta i $ for $i$ sufficiently large. 
We endow $\NN^r$ with its poset structure 
\[
\mm ' \leqslant \mm  \text{ if and only if }   m_\alpha ' \leqslant m_\alpha  \text{ for all } \alpha
\]
and 
extend this definition to families indexed by $\NN^r$ by considering indices such that
\[
\langle   \rho , \mm \rangle = \sum_{\alpha =1 }^r \rho_\alpha m_\alpha   = i  
\]
for a fixed $\rho \in \mathbf N^r \setminus \{ \mathbf 0 \}$. More precisely, a formal series $\sum_{\mm \in \NN^r} \mathfrak c _{\mm} \TT^\mm $ is said to converge \textit{weight-linearly with respect to $\rho$}, which we may sometimes abbreviate by saying \textit{$\rho$-weight-linearly},  at $\mathfrak a = ( \mathfrak a_1 , ... , \mathfrak a_r ) \in \left ( \ExpM_\CC \right ) ^r $ if there exists a real number $\delta > 0$  such that for $i$ sufficiently large one has
\[
w_X 
 (  \mathfrak c_{\mm} \mathfrak a ^\mm ) < - \delta i . 
\]
for every $\mm \in \NN^r $ such that $\langle \rho, \mm \rangle = i$. 
This property only depends on the support 
\[
\Supp ( \rho ) = \{ 1 \leqslant \alpha \leqslant r \mid \rho_\alpha > 0 \}
\]
of $\rho$. Indeed, if $\rho_1 , \rho_2 \in \NN^r$ have same support, then 
\[
\min_{\alpha \in \Supp ( \rho_1 ) } ( \rho_{1,\alpha} / \rho_{2,\alpha} ) \langle \rho_1 , \mm \rangle \leqslant \langle \rho_2 , \mm \rangle  \leqslant \max_{\alpha \in \Supp ( \rho_1 ) } ( \rho_{1\alpha} / \rho_{2,\alpha} ) \langle \rho_1 , \mm \rangle
\] 
for all $\mm \in \NN^r $.

In particular,
for such a $\rho$-weight-linearly convergent series we can licitly consider the sum
\[
\sum_{\mm \in \NN^r } \mathfrak c_{\mm} \mathfrak a ^\mm
\]
in $\widehat{\ExpM_X}$
as soon as 
$\sum_{\mm \in \NN^r } \mathfrak c_{\mm} \TT^\mm$ belongs to the subring of formal series with indeterminates $T_\alpha $ for $\alpha \in \Supp ( \rho )$.  

Then the value of the sum does not depend on the order of summation, 
in particular the partial sum \[
\sum_{\mm ' \leqslant \mm } \mathfrak c _{\mm'  } \mathfrak a ^{\mm ' }
\]
has a limit in $\widehat{\ExpM_X} $ 
when $\langle \rho , \mm \rangle $ tends to infinity. 

In the fourth section of this paper, we will have to deal with weight-linearly convergent series with respect to multiple $\rho$'s having different supports, each of these series depending only on the indeterminates $T_\alpha $ for $\alpha \in \Supp ( \rho )$. 
In this situation, simultaneous convergence for the various $\rho$'s will be obtained by asking $\min_{1\leqslant \alpha \leqslant r} ( m_\alpha )$ to be arbitrarily large.  

\begin{mylemma}\label{lemma:weight-convergence-product-stability}Fix $\rho \in \mathbf Z_{\geqslant 1}^r$ and
let $P(\TT ) = \sum_{\mm \in \NN^r} \mathfrak p_{\mm} \TT^\mm $ and $Q( \TT ) = \sum_{\mm \in \NN^r} \mathfrak q_{\mm} \TT^\mm$ be two formal series  
with coefficients in $\ExpM_X $. 
Assume that they converge $\rho$-weight-linearly at $\TT_\alpha = \LL^{-\rho_\alpha}$, which means that
 $\sum_{\mm \in \NN^r} \mathfrak p _{\mm} \LL^{-\langle \rho , \mm \rangle }$ and $\sum_{\mm \in \NN^r} \mathfrak q_{\mm} \LL^{-\langle \rho , \mm \rangle }$ are $\rho$-weight-linearly convergent. 
Then the product $R ( \TT ) = P ( \TT ) Q (\TT ) $ converges $\rho$-weight-linearly at $\TT_\alpha = \LL^{-\rho_\alpha}$.
\begin{proof}
The  $\mm$-th coefficient of $R(\TT)$ is given by 
\[
\mathfrak r_\mathbf m = \sum_{\mm' \leqslant \mm } \mathfrak p_{\mm ' } \mathfrak q_{\mm - \mm ' } .
\]
Let $\delta , \delta ' > 0$ and $i_0, i_0' \in \NN$ be such that 
\begin{align*}
w_X \left ( \mathfrak p _{\mm} \LL^{-\langle \rho , \mm \rangle } \right ) &< -\delta \langle \rho , \mm \rangle \text{ for any $\mm$ such that $\langle \rho , \mm \rangle \geqslant i_0$; } \\
w_X \left ( \mathfrak q _{\mm} \LL^{-\langle \rho , \mm \rangle } \right ) &< -\delta ' \langle \rho , \mm \rangle \text{ for any $\mm$ such that $\langle \rho , \mm \rangle \geqslant i_0 '$.}
\end{align*} 
Then by Proposition \ref{proposition:properties-of-the-weight}
\[
w_X ( \mathfrak r _{\mm} \LL^{-\langle \rho , \mm \rangle } )  \leqslant \max_{\mm ' \leqslant \mm} \left (  w_X \left ( \mathfrak p _{\mm ' } \LL^{-\langle \rho , \mm ' \rangle } \right )   +  w_X \left ( \mathfrak q _{\mm - \mm ' } \LL^{-\langle \rho , \mm - \mm '  \rangle } \right )  \right )  .
\]
There is only a finite number of indices $\mm ' $ such that $\langle \rho , \mm ' \rangle < i_0$. Fix such an $\mm' $. Then for $\mm$ such that $\langle \rho , \mm \rangle$ is sufficiently large 
\[
w_X \left ( \mathfrak p _{\mm ' } \LL^{-\langle \rho , \mm ' \rangle } \right )   +  w_X \left ( \mathfrak q _{\mm - \mm ' } \LL^{-\langle \rho , \mm - \mm '  \rangle } \right ) < - \frac{\delta ' }{2} \langle \rho , \mm   \rangle .
\]
The case when $\langle \rho , \mm - \mm ' \rangle < i_0 '$ is symmetric, and the last case $\langle \rho , \mm ' \rangle \geqslant i_0$ and $\langle \rho , \mm - \mm ' \rangle \geqslant i_0 '$ is left to the reader. This shows the existence of $j_0\in \NN$ such that
\[
w_X ( \mathfrak r _{\mm} \LL^{-\langle \rho , \mm \rangle } ) <-  \frac{\min ( \delta , \delta ' )}{2} \langle \rho  , \mm \rangle 
\]
for any $\mm$ such that $\langle \mm , \rho \rangle \geqslant j_0$. 
Thus $R ( \TT ) $ converges $\rho$-weight-linearly at $\TT_\alpha = \LL^{-\rho_\alpha}$.
\end{proof}
\end{mylemma}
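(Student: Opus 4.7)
The plan is to expand $R(\TT)$ as a Cauchy convolution and estimate each coefficient's weight via the subadditivity of $w_X$ under sums and its submultiplicativity under products. Writing
\[
\mathfrak r_\mm = \sum_{\mm' \leqslant \mm} \mathfrak p_{\mm'} \mathfrak q_{\mm - \mm'},
\]
property (2) of Proposition \ref{proposition:properties-of-the-weight} bounds the weight of a finite sum by the maximum of the summand weights, while submultiplicativity of the weight under products (a consequence of the general inequality used in the discussion preceding the lemma) gives
\[
w_X(\mathfrak r_\mm \LL^{-\langle \rho, \mm \rangle}) \leqslant \max_{\mm' \leqslant \mm} \Bigl( w_X(\mathfrak p_{\mm'} \LL^{-\langle \rho, \mm' \rangle}) + w_X(\mathfrak q_{\mm-\mm'} \LL^{-\langle \rho, \mm - \mm' \rangle}) \Bigr).
\]
By hypothesis, there exist slopes $\delta, \delta' > 0$ and thresholds $i_0, i_0'$ such that each of the two factors has strictly negative linear weight in its index once $\langle \rho, \mm' \rangle \geqslant i_0$, respectively $\langle \rho, \mm - \mm' \rangle \geqslant i_0'$.

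The main obstacle is the \emph{boundary regime}, where one of $\mm'$ or $\mm - \mm'$ lies below its threshold and the hypothesis provides no linear control there. The key observation to overcome this is that the set of $\mm'$ with $\langle \rho, \mm' \rangle < i_0$ is finite (since $\rho \in \ZZ_{\geqslant 1}^r$), hence the weights $w_X(\mathfrak p_{\mm'} \LL^{-\langle \rho, \mm' \rangle})$ in this range are uniformly bounded by some absolute constant $C$. For such $\mm'$, and for $\mm$ with $\langle \rho, \mm \rangle$ large enough that $\langle \rho, \mm - \mm' \rangle \geqslant i_0'$, the pair weight is at most $C - \delta'\langle \rho, \mm - \mm' \rangle$, which is $\leqslant -\tfrac{\delta'}{2}\langle \rho, \mm \rangle$ for $\langle \rho, \mm \rangle$ sufficiently large, absorbing $C$ into the linear slack. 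The symmetric case $\langle \rho, \mm - \mm' \rangle < i_0'$ is handled identically.

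In the remaining interior regime where both thresholds are cleared, adding the two strictly negative linear bounds gives a pair weight of at most $-\min(\delta, \delta')\langle \rho, \mm \rangle$. Taking the maximum over the three regimes, the overall slope is bounded above by $-\tfrac{1}{2}\min(\delta, \delta')\langle \rho, \mm \rangle$ once $\langle \rho, \mm \rangle$ exceeds a threshold depending on $C, i_0, i_0', \delta, \delta'$. This is the desired $\rho$-weight-linear decay of $\mathfrak r_\mm \LL^{-\langle \rho, \mm \rangle}$, which establishes convergence of $R(\TT)$ at $T_\alpha = \LL^{-\rho_\alpha}$. No additional input beyond the stated weight properties is needed; the only subtlety is the bookkeeping that tracks the boundary terms and chooses the final slope small enough to swallow the finitely many exceptional contributions.
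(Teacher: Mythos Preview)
Your proof is correct and follows essentially the same approach as the paper's: both write the convolution formula, bound $w_X(\mathfrak r_\mm \LL^{-\langle \rho,\mm\rangle})$ by the maximum over $\mm'$ of the summed weights, split into the three regimes according to whether $\langle\rho,\mm'\rangle$ or $\langle\rho,\mm-\mm'\rangle$ lies below its threshold, and use finiteness of the sub-threshold indices to absorb the boundary contributions into a halved linear slope $-\tfrac{1}{2}\min(\delta,\delta')\langle\rho,\mm\rangle$.
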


\begin{mylemma}\label{lemma:negligible-terms}
Let $\sum_{\mm \in \NN^r } \mathfrak c_{\mm} \TT^\mm$ be a 
convergent sum at $\mathfrak a $ with coefficients in $\ExpM_X $ and $\varepsilon \in \{0,1\}^r \setminus \{ \mathbf 0 \}$.
For any $\mm = ( m_i)_{i=1}^r \in \NN^r $ we write $\langle \varepsilon , \mm \rangle = \sum_{i=1}^r \varepsilon_i m_i $. 
Then 
\[
\sum_{\mm ' \leqslant \mm } \mathfrak c_{\mm'} \mathfrak a^{\mm ' } \LL^{-\langle \varepsilon , \mm - \mm' \rangle } 
\]
tends to zero in $\widehat{\ExpM_X} $ when $\min_{1\leqslant i \leqslant r} ( m_i ) \to \infty $. 

If moreover $\sum_{\mm \in \NN^r } \mathfrak c_{\mm} \TT^\mm $ is weight linearly-convergent at $\mathfrak a$ with respect to some given ${\rho \in \mathbf Z_{\geqslant 1}^r} $, then there exist $\delta > 0$ and $i_0 \in \NN$ such that 
\[
w_X \left ( \sum_{\mm ' \leqslant \mm } \mathfrak c_{\mm'} \mathfrak a ^{\mm  '} \LL^{-\langle \varepsilon , \mm - \mm' \rangle }  \right ) < - \delta \langle \varepsilon , \mm \rangle
\]
whenever $\langle \varepsilon , \mm \rangle \geqslant i_0$.
\begin{proof}In the proof we write $\mathfrak c_{\mm}$ for $\mathfrak c_{\mm} \mathfrak a ^{\mm  }$.
Fix an $A \in \mathbf Z$. There exists $\mm_0 \in \NN^r$ such that 
\[
w_X ( \mathfrak c_{\mm} ) < A  \text{ for all $\mm$ such that $\mm \nleqslant \mm_0$.}
\]
Set $\omega = \max ( w_X ( \mathfrak c_{\mm} \mid \mm \leqslant \mm_0 )) $. On one hand if $\mm ' \leqslant \mm_0 $ one has
\[
w_X \left ( \mathfrak c_{\mm '} \LL^{-\langle \varepsilon , \mm - \mm ' \rangle} \right )  \leqslant  \omega - \langle \varepsilon , \mm - \mm ' \rangle \leqslant \omega - \langle \varepsilon , \mm - \mm_0 \rangle < A
\]
if $\min ( m_i ) $ is large enough. 
On the other hand $\mm ' \nleqslant \mm_0$ implies $w_X \left ( \mathfrak c_{\mm '} \LL^{-\langle \varepsilon , \mm - \mm ' \rangle} \right ) < A$. Thus applying Proposition \ref{proposition:properties-of-the-weight} one gets 
\[
w_X \left (  \sum_{\mm ' \leqslant \mm } \mathfrak c_{\mm '} \LL^{-\langle \varepsilon , \mm - \mm ' \rangle}   \right ) < A
\]
for any $\mm $ sufficiently large. This proves the lemma.

Now we assume that 
there exist $i_0 \in \NN$ and $\delta >0 $ such that 
\[
w_X ( \mathfrak c_{\mm} ) < - \delta   \langle \mm , \rho \rangle 
\]
whenever $\langle \mm , \rho \rangle \geqslant i_0$. 
Remark that since we assumed that $\rho \in \ZZ_{\geqslant 1}^r$ and $\varepsilon \in \{ 0 , 1 \}^r$, we have 
\[
\langle \rho , \mm \rangle \geqslant   \langle \varepsilon , \mm \rangle 
\]
for all $\mm \in \NN^r$. 
On one hand, if $\langle \mm ' , \rho \rangle < i_0$, then 
\[
w_X \left ( \mathfrak c_{\mm '} \LL^{-\langle \varepsilon , \mm - \mm ' \rangle} \right )   \leqslant \omega '  - \langle \varepsilon , \mm  \rangle + i_0 < - \frac{1}{2} \langle \varepsilon , \mm  \rangle 
\]
whenever $\langle \varepsilon , \mm \rangle > 2 ( \omega ' + i_0 ) $ and $\omega ' = \max ( w_X ( \mathfrak c_{\mm} \mid \langle \mm ' , \rho \rangle < i_0 )) $. 
On the other hand, if $\langle \mm ' , \rho \rangle \geqslant i_0$ then $w_X ( \mathfrak c_{\mm ' } ) < - \delta   \langle \mm ' , \rho \rangle $. In particular, we have this last inequality if $\langle \varepsilon , \mm ' \rangle \geqslant i_0$. 
Finally by Proposition \ref{proposition:properties-of-the-weight}
\[
w_X \left (  \sum_{\mm ' \leqslant \mm } \mathfrak c_{\mm '} \LL^{-\langle \varepsilon , \mm - \mm ' \rangle}   \right ) < - \min \left ( \frac{1}{2}  \langle \varepsilon , \mm \rangle  , \delta \langle \rho , \mm \rangle  \right )  
\leqslant - \min \left ( \frac{1}{2} , \delta \right ) \langle \varepsilon , \mm \rangle
\]
for all $\mm $ such that $\langle \varepsilon , \mm \rangle \geqslant 2 ( \omega ' + i_0 ) + 1 $. 
\end{proof}
\end{mylemma}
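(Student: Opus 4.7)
The plan is to decompose the sum $\sum_{\mm' \leqslant \mm} \mathfrak{c}_{\mm'}\mathfrak{a}^{\mm'} \LL^{-\langle \varepsilon, \mm - \mm' \rangle}$ into a \emph{head}, consisting of indices $\mm'$ lying in a fixed finite box independent of $\mm$, and a \emph{tail}, consisting of the remaining indices. Since the weight is ultrametric by Proposition~\ref{proposition:properties-of-the-weight}(2), the weight of the whole sum is bounded by the maximum of the weights of its terms, so it suffices to bound the head and tail contributions separately. The key qualitative features of the two regions are:~on the head, the factor $\LL^{-\langle \varepsilon, \mm - \mm'\rangle}$ alone drives the weight to $-\infty$ because $\varepsilon \neq \mathbf 0$ forces $\langle \varepsilon, \mm\rangle \to \infty$ as $\min_i(m_i) \to \infty$; on the tail, the convergence hypothesis on the series controls $w_X(\mathfrak{c}_{\mm'}\mathfrak{a}^{\mm'})$.

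For assertion (1), I would fix an arbitrary threshold $A \in \ZZ$. Convergence of $\sum_{\mm}\mathfrak{c}_{\mm}\mathfrak{a}^{\mm}$ in $\widehat{\ExpM_X}$ means that only finitely many indices $\mm'$ satisfy $w_X(\mathfrak{c}_{\mm'}\mathfrak{a}^{\mm'}) \geqslant A$; enclose them in a box $\{\mm' \leqslant \mm_0\}$ and set $\omega = \max\{w_X(\mathfrak{c}_{\mm'}\mathfrak{a}^{\mm'}) : \mm' \leqslant \mm_0\}$. On the head (finitely many terms), the monomial $\LL^{-\langle \varepsilon, \mm - \mm'\rangle}$ contributes weight at most $\omega - 2\langle \varepsilon, \mm - \mm_0 \rangle$, which falls below $A$ once $\min_i(m_i)$ is large. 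On the tail, $w_X(\mathfrak{c}_{\mm'}\mathfrak{a}^{\mm'}) < A$ by choice of $\mm_0$, and the $\LL$-factor only lowers the weight. Hence the whole partial sum has weight below $A$ for $\mm$ large enough, and since $A$ was arbitrary the sum tends to $0$ in $\widehat{\ExpM_X}$.

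For assertion (2) the crucial extra ingredient is the pointwise inequality $\langle \rho, \mm'\rangle \geqslant \langle \varepsilon, \mm'\rangle$, which follows from $\rho_\alpha \geqslant 1 \geqslant \varepsilon_\alpha$ and makes it possible to convert a $\rho$-linear bound into an $\varepsilon$-linear one. Splitting exactly as before, on the head the estimate $\omega - 2\langle \varepsilon, \mm - \mm_0\rangle \leqslant -\tfrac{1}{2}\langle \varepsilon, \mm\rangle$ holds for $\langle \varepsilon, \mm\rangle$ large enough. On the tail I would apply the weight-linear convergence hypothesis to obtain $w_X(\mathfrak{c}_{\mm'}\mathfrak{a}^{\mm'}) < -\delta'\langle \rho, \mm'\rangle$, then combine with the $\LL$-factor to get
\[
w_X\bigl(\mathfrak{c}_{\mm'}\mathfrak{a}^{\mm'}\LL^{-\langle \varepsilon, \mm - \mm'\rangle}\bigr)\leqslant -\delta'\langle\varepsilon, \mm'\rangle - 2\langle\varepsilon, \mm - \mm'\rangle \leqslant -\min(\delta',2)\,\langle \varepsilon, \mm\rangle.
\]
Applying the ultrametric inequality to the entire sum yields the desired bound with $\delta = \tfrac{1}{2}\min(\delta',2)$. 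The only mildly subtle point is choosing the splitting threshold $\mm_0$ and verifying simultaneously that all three contributions (finite head, $\rho$-large tail, and overlap regions) produce the same linear decay rate in $\langle \varepsilon, \mm\rangle$; apart from this bookkeeping, the argument is a direct application of Proposition~\ref{proposition:properties-of-the-weight}.
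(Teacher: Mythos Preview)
Your proposal is correct and follows essentially the same approach as the paper: split the sum into a head (indices in a fixed finite box) and a tail, bound each via the ultrametric inequality of Proposition~\ref{proposition:properties-of-the-weight}(2), and for the second assertion use the key comparison $\langle \rho, \mm'\rangle \geqslant \langle \varepsilon, \mm'\rangle$ to convert the $\rho$-linear decay into an $\varepsilon$-linear one. The only cosmetic difference is that you keep the sharp factor of $2$ from $w(\LL^{-k}) = -2k$, whereas the paper uses the looser bound $-k$; this does not affect the argument.
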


Now we state and prove a weight-linear convergence criterion for Euler products. This result and its proof are a slight extension of Proposition 4.7.2.1 in  \cite{bilu2018motivic} to the framework of multivariate series.
\begin{myptn}\label{proposition:linear-conv-criterion}
Fix a complex variety $X$, an integer $r\geqslant 1$ and an $r$-tuple $\rho \in \mathbf Z_{\geqslant 1}^r$. 
Let $\XXX$ be a family $(X_\mm )_{\mm \in \NN^r \setminus \{ \mathbf 0 \}}$ of elements of $\ExpM _X$. 
Assume that 
there exist an integer $M\geqslant 0$ 
and real numbers $\varepsilon > 0$, $\alpha < 1 $ and $\beta $ such that 
\begin{itemize}
\item $w_X (X_{\mm} ) \leqslant \left ( \langle \rho , \mm \rangle - \frac{1}{2} - \varepsilon \right ) w(X)$ whenever $1 \leqslant \langle \rho , \mm \rangle \leqslant  M$;
\item $w_X ( X_{\mm}) \leqslant \left ( \alpha \langle \rho , \mm \rangle + \beta - \frac{1}{2} \right ) w(X)$	whenever $\langle \rho , \mm \rangle > M$.
\end{itemize}
Let $\mathfrak b_{\mm} $ be the coefficient of multidegree $\mm \in \NN^r $ of the Euler product 
\[
\prod_{x\in X} \left ( 1+\sum_{\mm \in \NN^r \setminus \{\mathbf 0 \} } X_{\mm , x} \TT^{\mm} \right ) \in \ExpM _\mathbf C [[\TT]] .
\]
Then there exists $\delta \in \left ]0,\frac{1}{2}\right ]$  and $\delta ' > 0 $ such that 
\[
w \left ( \mathfrak b_{\mm} \mathfrak a_1^{\rho_1 m_1}  \cdots \mathfrak a_r^{\rho_r m_r}  \right ) \leqslant - \delta ' \langle \rho , \mm \rangle 
\]
for every
$\mathbf{m}\in \NN^r \setminus \{ \mathbf 0 \}$ and
$\mathfrak a_1 , ... , \mathfrak a_r \in \ExpM_\mathbf C$ such that $w( \mathfrak a_i ) <  - w ( X ) \left ( 1 - \delta + \frac{\beta }{M+1} \right )$ for all $1\leqslant i \leqslant r$.  
In particular, the Euler product  \[
\prod_{x\in X} \left ( 1+\sum_{\mm\in \NN^r \setminus \{\mathbf 0 \} } X_{\mm , x} T^{\langle \rho , \mm \rangle } \right ) \in \ExpM _\mathbf C [[T]]
\]
converges for $|T|<\LL^{-\frac{w(X)}{2}\left (  1 - \delta + \frac{\beta}{M+1}\right )}$
and takes non-zero values for $|T| \leqslant \LL ^ {-\frac{w(X)}{2}\left (  1 - \eta + \frac{\beta}{M+1}\right )} $ for any $0\leqslant \eta <\delta $. 
\begin{proof}
By definition of the motivic Euler product notation, for every $\mm \in \NN^r $ the coefficient $\mathfrak b_\mm$ is given by the sum
\[\mathfrak b_{\mm} = \sum_{\pi \text{ partition of }\mm } [ S^\pi \XXX ] .\]
Since $w ( \mathfrak b_{\mm} ) $ is smaller or equal to $\max (  \{ w ( S^\pi \XXX ) \mid \pi \text{ partition of } \mm \} ) $, it is enough to find a uniform bound for the weight of $S^\pi \XXX$. 
So we fix $\pi = ( n_\mathbf i )_{\mathbf i\in  \NN^r \setminus \{ \mathbf 0 \}}$ a partition of $\mm $, 
that is such that $\sum_{\mathbf i\in  \NN^r \setminus \{ \mathbf 0 \} } n_\mathbf i \mathbf i = \mm $.
Then 
\begin{align*}
w( S^\pi \XXX )  & \leqslant w_{S^\pi X } ( S^\pi \XXX ) + \dim ( S^\pi X ) \\
& \leqslant \sum_{\mathbf i\in  \NN^r \setminus \{ \mathbf 0 \}} n_\mathbf i w_X ( X_\mathbf i ) + \frac{1}{2} \sum_{\mathbf i\in  \NN^r \setminus \{ \mathbf 0 \}} n_\mathbf i  w ( X ) \\
& \leqslant \sum_{j\geqslant 1} \sum_{ \substack{  \mathbf i\in  \NN^r \setminus \{ \mathbf 0 \} \\ \langle \rho , \mathbf i \rangle =  j } } n_\mathbf i w_X ( X_\mathbf i )  + \frac{1}{2} \sum_{j\geqslant 1} \sum_{\substack { \mathbf i\in  \NN^r \setminus \{ \mathbf 0 \} \\\langle \rho , \mathbf i \rangle =  j}} n_\mathbf i w ( X ) \\
&\leqslant  \sum_{j = 1}^M \left (  \sum_{ \substack{  \mathbf i\in  \NN^r \setminus \{ \mathbf 0 \} \\ \langle \rho , \mathbf i \rangle =  j } } n_\mathbf{i} ( j - \varepsilon ) w ( X )\right ) + \sum_{j \geqslant M+1} \left (  \sum_{ \substack{  \mathbf i\in  \NN^r \setminus \{ \mathbf 0 \} \\ \langle \rho , \mathbf i \rangle =  j } } ( \alpha j   + \beta ) n_\mathbf i  w ( X ) \right ) \\
& \leqslant  \sum_{j = 1}^M \left (  \sum_{ \substack{  \mathbf i\in  \NN^r \setminus \{ \mathbf 0 \} \\ \langle \rho , \mathbf i \rangle =  j } } n_\mathbf{i} \left ( 1 - \frac{\varepsilon }{M} \right ) j  w ( X )\right ) + \sum_{j \geqslant M+1} \left (  \sum_{ \substack{  \mathbf i\in  \NN^r \setminus \{ \mathbf 0 \} \\ \langle \rho , \mathbf i \rangle =  j } } \left ( \alpha   + \frac{\beta}{M+1} \right ) j  n_\mathbf i  w ( X ) \right ) \\
& = \left ( 1 - \frac{\varepsilon}{M} \right )  \sum_{j = 1}^M \left (  \sum_{ \substack{  \mathbf i\in  \NN^r \setminus \{ \mathbf 0 \} \\ \langle \rho , \mathbf i \rangle =  j } } n_\mathbf{i} j  w ( X )\right ) + \left ( \alpha + \frac{\beta}{M+1} \right )  \sum_{j \geqslant M+1} \left (  \sum_{ \substack{  \mathbf i\in  \NN^r \setminus \{ \mathbf 0 \} \\ \langle \rho , \mathbf i \rangle =  j } } n_\mathbf{i} j  w ( X )\right ) \\
& \leqslant \left ( 1 - \delta + \frac{\beta}{M+1} \right ) \langle \rho , \mm \rangle w ( X ) 
\end{align*}
where $\delta \in \left ] 0 , \frac{1}{2} \right ]$ is given by $1-\delta = \max \left ( 1 - \frac{\varepsilon}{M} , \alpha \right )$ if $M\neq 0 $ and $\delta = 1- \alpha $ otherwise. This proves the first part of the proposition. 

To conclude the proof, remark that if $w ( \mathfrak a_k ) <  - w ( X ) \left ( 1 - \eta_k + \frac{\beta }{M+1} \right )$ for some $0\leqslant \eta_k  < \delta$, for all integer $k$ in $\{ 1 , ... , r\}$, then the weight of $S^\pi \XXX \mathfrak a_1^{\rho_1 m_1}  \cdots \mathfrak a_r^{\rho_r m_r}$ is negative for any $\mm $ non-zero and any partition of $\mm$. This argument shows that the product takes a non-zero value at $\mathfrak a$ : it is equal to $1$ plus some terms of negative weight. 
\end{proof}
\end{myptn}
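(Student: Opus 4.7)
The plan is to reduce everything to a uniform weight bound on symmetric products and then exploit sub-additivity of the weight. By definition of the motivic Euler product notation, the coefficient of multidegree $\mm$ decomposes as
\[
\mathfrak{b}_\mm = \sum_{\pi} [S^\pi \XXX],
\]
where $\pi = (n_\mathbf{i})_{\mathbf{i} \neq \mathbf{0}}$ runs over the finite set of partitions of $\mm$, meaning the families of non-negative integers with $\sum \mathbf{i}\, n_\mathbf{i} = \mm$. By property (2) of Proposition \ref{proposition:properties-of-the-weight}, it therefore suffices to produce an upper bound on $w(S^\pi \XXX)$ that is uniform in $\pi$, of the shape $\bigl(1 - \delta + \beta/(M+1)\bigr)\langle \rho, \mm\rangle\, w(X)$; the rest of the proposition will follow by sub-multiplicativity and the hypothesis on the weights of the $\mathfrak{a}_i$.

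To control $w(S^\pi \XXX)$ I would combine three ingredients from Proposition \ref{proposition:properties-of-the-weight}. First, property (4) yields the comparison $w(S^\pi \XXX) \leqslant w_{S^\pi X}(S^\pi \XXX) + \dim(S^\pi X)$ between the absolute and the relative weight. Second, the symmetric-product inequality (3) gives $w_{S^\pi X}(S^\pi \XXX) \leqslant \sum n_\mathbf{i}\, w_X(X_\mathbf{i})$. Third, $\dim(S^\pi X) = \bigl(\sum n_\mathbf{i}\bigr)\dim(X) = \tfrac{1}{2}\bigl(\sum n_\mathbf{i}\bigr) w(X)$. Combining these, the extra $\tfrac{1}{2} w(X)$ contribution exactly absorbs the $-\tfrac{1}{2}$ appearing in both assumed bounds on $w_X(X_\mathbf{i})$, leaving a case-split estimate of the form
\[
w(S^\pi \XXX) \leqslant \!\!\sum_{\langle \rho, \mathbf{i}\rangle \leqslant M}\!\! n_\mathbf{i}\bigl(\langle \rho, \mathbf{i}\rangle - \varepsilon\bigr) w(X) + \!\!\sum_{\langle \rho, \mathbf{i}\rangle > M}\!\! n_\mathbf{i}\bigl(\alpha \langle \rho, \mathbf{i}\rangle + \beta\bigr) w(X).
\]

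The next step is to convert the additive constants into multiplicative contributions to $\langle \rho, \mathbf{i}\rangle$ so that the two sums can be merged. The elementary inequalities $-\varepsilon \leqslant -(\varepsilon/M)\langle \rho, \mathbf{i}\rangle$ for $1 \leqslant \langle \rho, \mathbf{i}\rangle \leqslant M$ and $\beta \leqslant (\beta/(M+1))\langle \rho, \mathbf{i}\rangle$ for $\langle \rho, \mathbf{i}\rangle \geqslant M+1$ provide exactly the right comparisons. Setting $1 - \delta := \max\bigl(1 - \varepsilon/M,\, \alpha\bigr)$, with the convention $\delta := 1 - \alpha$ in the degenerate case $M = 0$ where the first hypothesis is vacuous, both pieces merge into $(1 - \delta + \beta/(M+1))\, n_\mathbf{i}\, \langle \rho, \mathbf{i}\rangle\, w(X)$ per index; summing and applying $\sum n_\mathbf{i} \langle \rho, \mathbf{i}\rangle = \langle \rho, \mm\rangle$ gives the desired uniform bound.

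Finally, sub-multiplicativity of $w$ yields $w\bigl(\mathfrak{b}_\mm\,\mathfrak{a}_1^{\rho_1 m_1}\cdots \mathfrak{a}_r^{\rho_r m_r}\bigr) \leqslant w(\mathfrak{b}_\mm) + \sum_i \rho_i m_i\, w(\mathfrak{a}_i)$, and the hypothesis $w(\mathfrak{a}_i) < -w(X)\bigl(1 - \delta + \beta/(M+1)\bigr)$ forces the right-hand side to be bounded above by a strictly negative linear function of $\langle \rho, \mm\rangle$, producing the claimed $-\delta'\langle \rho, \mm\rangle$ estimate and in turn both the convergence of the Euler product at $T = \LL^{-w(X)(1-\delta+\beta/(M+1))/2}$ and its non-vanishing on the slightly smaller disc. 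The delicate point I expect will be bookkeeping the case split so that $\delta$ actually lands in $(0, 1/2]$ for every value of $M \geqslant 0$, and ensuring the two elementary inequalities agree at the seam $\langle \rho, \mathbf{i}\rangle = M$ versus $M+1$; the remainder reduces to routine manipulation of the weight axioms.
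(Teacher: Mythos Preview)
Your proposal is correct and follows essentially the same approach as the paper's own proof: decomposition of $\mathfrak{b}_\mm$ over partitions, the chain of weight inequalities via properties (3) and (4) and $\dim(S^\pi X)=\tfrac12\bigl(\sum n_\mathbf{i}\bigr)w(X)$, the same elementary inequalities to absorb the additive constants, the identical definition of $\delta$, and the same sub-multiplicativity argument for convergence and non-vanishing. The only cosmetic difference is that the paper groups indices by the value $j=\langle\rho,\mathbf{i}\rangle$ before applying the bounds, whereas you apply them indexwise; the content is the same.
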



\section{Compactifications of Additive Groups} \label{section:compactification-of-additive-groups}

\subsection{Back to our setting} \label{subsection:back-to-our-setting}
We recall in this paragraph a few facts and notation coming from the setting adopted in \cite{bilu2018motivic} and \cite{chambert2016motivic}.

From now on, we consider the situation presented in the introduction: briefly,
our data consists in a Zariski open subset $\UUU$ of a projective irreducible scheme $\XXX$ 
over an algebraically closed field $k$ of characteristic zero,
together with a non-constant proper morphism 
\[ \pi : \XXX \to C  \] 
whose generic fibre $X = \XXX_F$ is a smooth equivariant compactification of $G_F = \mathbf G_{a,F}^n$. Let $(D_\alpha)_{\alpha \in \AAA}$ 
be the family of irreducible components of the boundary divisor $X\setminus G_F$. 
Their linear classes freely generate the Picard group of $X$, as well as its effective cone.
An anticanonical divisor of $X$ is given by 
\[
\sum_\alpha \rho_\alpha D_\alpha
\]
for a certain family ${ \rho_\alpha \geqslant 2 }$ of integers. In particular, it is big.

Let $\mathscr D_\alpha$ be the Zariski closure of $D_\alpha$ in $\XXX$, for all $\alpha \in \AAA$. 
Using resolution of singularities in characteristic zero, one can assume that $\XXX$ is a \textit{good} model, that is to say, 
$\XXX$ is smooth over $k$ and the sum of the non-smooth fibres of $\XXX$ 
and of the $\mathscr D_\alpha$ is a divisor with strict normal crossings. 
We may assume that $\XXX \setminus \UUU$ is a divisor with strict normal crossings as well. These assumptions will not change our counting problem (see \S 3.2 and Lemma 3.4.1 of \cite{chambert2016motivic}). 

The restriction $U=\UUU_F$ is assumed to be a partial compactification of $G_F$. 
The boundary $D=X\setminus U$ is a divisor 
which can be written $D=\sum_{\alpha \in \AAA_D} D_\alpha$ for a  subset $\AAA_D $ of $ \AAA$. 
A log-anticanonical divisor with respect to $D$ is then $\sum_{\alpha \in \AAA} \rho_\alpha ' D_\alpha $ where ${ \rho_\alpha ' = \rho_\alpha  - 1 }$ if $D_\alpha$ is an irreducible component of $D$ and ${ \rho_\alpha ' = \rho_\alpha }$ otherwise. 
We set 
\[ 
\AAA_U = \AAA \setminus \AAA_D . 
\]

We fix as well a dense open subset $C_0 \subset C$ and assume that $\UUU ( \mathcal O_v ) \neq \varnothing $ whenever $v\in C_0 ( k )$.
For any $k$-point $v $ of $C$, let $\BBB_v $ be the set of irreducible components of $\pi^{-1} ( v)$.
Given an irreducible component $\beta \in \BBB_v$,
let us denote by $E_\beta$ the corresponding component  
and $\mu_\beta$ its multiplicity in the special fibre of $\XXX$ at $v$ (that is, the length of the $\mathcal  O_v$-module of regular functions on $E_\beta$ in $\XXX_v = \XXX \times_{\mathcal O_v} \Spec (k  ) $).
Let $\BBB = \cup_{v\in C(k)} \BBB_v$ be the union of all the $\BBB_v$'s 
and $\BBB_1$ the subset of components of multiplicity one. 
Over any place $v$ this subset restricts to a subset $\BBB_{1,v} = \BBB_1  \cap \mathscr  B_v $ of $\BBB_v$.
 If $v\in C_0 (k)$, we write $\BU_v$ for the subset of $\BBB_{1,v}$ 
of vertical components intersecting $ \UUU$. This definition makes sense since we assumed that $\XXX \setminus \UUU$ is a divisor: it is the union of the $\DDD_\alpha$ for $\alpha \in \AAA_D$ together with a finite number of vertical divisors. 
It is convenient to set $\BU_s = \BBB_{1, s} $ if $s\in S$.

For every $\alpha $ in $ \AAA$, we fix a line bundle $\mathscr L_\alpha$ on $\XXX$ extending $D_\alpha$.
There is a finitely supported family of integers $(e_\alpha^\beta)$ indexed by $\AAA$ and $\BBB $ such that 
\[
\mathscr L_\alpha  = \mathscr D_\alpha + \sum_{\beta \in \BBB } e_\alpha^\beta E_\beta 
\]
for any $\alpha $ in $\AAA$, as well as a finitely supported family of integers $(\rho^\beta ) $ indexed by $\BBB$ such that 
\[
- \ddiv ( \omega_X ) = \sum_{\alpha \in \AAA} \rho_\alpha \DDD_\alpha + \sum_{\beta \in \BBB} \rho^\beta E_\beta 
\]
with $\omega_X$ the unique (up to a multiplicative constant) $G_F$-invariant rational differential form on $X$, which is understood here as a rational section of the relative canonical bundle of $\XXX$ over $C$. 
By Lemmas 3.3.3 and 3.3.4 in \cite{chambert2016motivic}, there exists an open dense subset $C_1 \subset C_0$ such that $|\BBB_v |=1$ for every $v\in C_1 ( k)$ (so that $\BBB_v  = \BBB_{1,v} = \BBB_v^\UUU $ above $C_1$, since we assumed $\UUU ( \mathcal O_v ) \neq \varnothing $). Furthermore one can assume $e_\alpha^{\beta_v}= \rho^{\beta_v} = 0 $ for all $v\in C_1 ( k ) $ and $\mathscr D_\alpha \times_C C_1 \to C_1 $ smooth for all $\alpha \in \AAA_U $. This open subset $C_1$ can be understood as the set of places of \textit{good reduction}. 

Given any closed point $v\in C(k)$, for every subset $A$ of $\AAA$ and every irreducible component $\beta$ of multiplicity one above $v$, let $\Delta_v ( A, \beta ) $ be the set of points of the special fibre $\XXX_v $ of $\XXX\to C $ over $v$ belonging exclusively to $\mathscr D_\alpha $, for  every $\alpha \in A$, and to $E_\beta$: 
\[
\Delta_v ( A , \beta ) = \left ( \bigcap_{\alpha \in A } \DDD_{\alpha , v } \cap E_\beta \right )  \setminus \left ( \bigcup_{\substack{\alpha \notin A \\ \beta ' \neq \beta} } \DDD_{\alpha , v } \cup  E_{\beta '}  \right ) .
\]
 We denote by $\Omega_v( A , \beta ) $ the preimage of $\Delta_v ( A , \beta ) $ in the arc space $\mathscr L ( \XXX_{\mathcal O_v} ) $ through the projection $\mathscr L ( \XXX_{\mathcal O_v} )  \to \mathscr L_0 ( \XXX_{\mathcal O_v} )  = \XXX_v$.
Lemma 5.2.6 in \cite{chambert2016motivic} (recalled as Lemma 6.3.3.2 in \cite{bilu2018motivic}) gives the existence of an isomorphism 
\[
 \begin{array}{rll} \Theta : 
\Delta_v ( A , \beta ) \times \LLL ( \mathbf A^1 , 0 ) ^A \times \LLL ( \mathbf A^1 , 0 )^{n-|A|} & \longrightarrow & \Omega_v ( A , \beta )\\
x = ( x_\Delta , ( x_\alpha )_{\alpha \in A } , ( y_\alpha ) ) & \longmapsto	 & \Theta ( x)  
\end{array}
\]
which preserves the motivic measures and such that $\ord_0 ( x_\alpha ) = \ord_{\mathscr D_\alpha } ( \Theta ( x ) ) $ whenever $\alpha\in A$ and $ \ord_{\mathscr D_\alpha } ( \Theta ( x ) )  = 0$ otherwise. 
For further details, we refer to \cite[\S 2.4 \& \S 6.2]{chambert2016motivic}.

To conclude this section,
one may note that our problem is defined by a finite number of polynomial equations over $k$. Fixing an embedding of $k$ into $\CC$, we can assume that everything is defined over the field of complex numbers (using the fact that the definition of the moduli spaces of sections is functorial). Moreover, the assumption $\UUU ( \OOO_v ) \neq \varnothing $ for all $v\in C_0$ implies that for such $v$,
at least one of the $\Delta_v ( A , \beta ) $ has a $k$-point, thus has a $\CC$-point
\cite[\S 6.4.4]{bilu2018motivic}. 


\subsection{Expression of the Multivariate Zeta series}
For the sake of completeness, we start by recalling the method leading to the decomposition of the motivic Zeta function, as it is developed in \cite{chambert2016motivic} and \cite{bilu2018motivic}. A motivation for doing that is giving the reader a precise meaning of the summation symbols involved here: our asymptotic study will require to permute a sum over rational points and a limit in the completed ring $\widehat{\ExpM_k}$.

Given a $k$-point $v\in C(k)$, it is possible to define local intersection degrees $(g,D_\alpha )_v $ and $(g,E_\beta )_v$ for all $g\in G ( F_v ) $, $\alpha \in \AAA$ and $\beta \in \BBB_v$, in the following manner \cite[\S 3.3]{chambert2016motivic}.
Let ${g : \Spec ( F_v ) \to G_F} $ be such a point. Since $\XXX \to C $ is proper, by the valuative criterion of properness this map extends to a map ${ \tilde{ g } : \Spec ( \mathcal O_v ) \to \XXX}$. 
\begin{center}
\begin{tikzcd}
\Spec ( F_v ) \arrow[r,"g"] \arrow[d] & G_F \arrow[r] & \XXX \arrow[d," \pi "] \\
\Spec ( \mathcal O_v ) \arrow[rr] \arrow[urr, dashed, " \tilde{g} "] &  & C 
\end{tikzcd}
\end{center}
The non-negative local intersection number $( g , \DDD_\alpha )_v $ is given by the effective Cartier divisor on $\Spec ( \mathcal O_v )$ obtained by pulling-back $\DDD_\alpha$:
\[
\tilde{g}^* \DDD_\alpha = ( g , \DDD_\alpha )_v  [v ] . 
\]
The number $( g , E_\beta )_v$ is defined by pulling-back $E_\beta$ on $\Spec ( \mathcal O_v ) $. 
Such invariants satisfy the following two properties: 
\begin{itemize}
\item they are compatible with the global degree of the section $\sigma_g : C \to \XXX$ canonically extending $g\in G(F)$, with respect to $\mathscr D_\alpha$ for any $\alpha$:
\[
\deg_C ( \sigma_g^* ( \mathscr D_\alpha ) )  = \sum_{v\in C(k)} ( g , \mathscr D _\alpha )_v \, ; 
\] 
\item the $F_v$-point $g $ intersects exactly one vertical component of multiplicity one: there exists an unique $\beta \in \BBB_v$ such that 
\[ 
( g , E_\beta )_v = 1 \text{ with } \mu_\beta  = 1 
\]
and 
\[ 
\beta ' \neq \beta  \text{ implies } ( g  , E_{\beta ' } )_ v = 0.
\]
\end{itemize}
Then one defines the sets 
\[
G ( \mm, \beta  )_v = \{ g \in G ( F_v ) \mid ( g , E_\beta )_v = 1 \text{ and } ( g , \mathscr D_\alpha )_v = m_{\alpha} \text{ for all } \alpha \in \AAA \}
\]
for all $\mm \in \NN^\AAA$ and $\beta \in \BBB_v$. These sets provide a decomposition of $G(F_v)$ into disjoint bounded definable subsets \cite[Lemma 3.3.2]{chambert2016motivic}. 
A pair $(\mm  , \beta ) \in \NN^\AAA \times \BBB_v$ is said to be $v$-integral if 
\begin{itemize}
\item either $v\in C_0$, $\beta \in \BBB_{v}^\UUU$ and $m_{\alpha  }= 0$ for every $\alpha \in \AAA_D$; 
\item or $v\in C\setminus C_0$. 
\end{itemize}
One then introduces the corresponding sets 
\[
H ( \mm  , \beta  )_v = G ( \mm  , \beta  )_v \text{ if and only if } ( \mm  , \beta )\text{ is $v$-integral, and $\varnothing$ otherwise.}
\]
for any place $v\in C(k)$ and any pair $(\mm , \beta)$.  
These definitions provide adelic sets 
\[
H ( \m , \beta ) = \prod_{v\in C ( k ) } H ( \m _v , \beta_v  )_v \subset G ( \m , \beta ) = \prod_{v\in C ( k ) } G ( \m _v , \beta_v  )_v \subset G ( \mathbb A_{k(C)} ) 
\]
for any $\m = ( \m_v ) \in S^\nn C $ and $\beta = ( \beta_v ) \in \prod_v \BBB_v $. 
By propositions 6.2.3.3 and 6.2.4.2 of \cite{bilu2018motivic}, there exist 
\begin{itemize}
\item almost zero functions $s , s' : C \to \mathbf Z $ (playing the roles of $a$ and $b$ in \S \ref{paragraph:poisson-families});
\item an unbounded family $N = (N_{\mm})_{\mm \in \NN^\AAA}$ such that $N_\mathbf 0 = 0$;
\item for all $\mm \in \NN^\AAA $ and $\beta \in \prod_v \BBB_v$, a constructible subset $H_{\mathbf m , \beta } \subset \mathbf A_C^{n(s'-N_\mathbf m , s )} $; 
\end{itemize}
such that for every $v\in C ( k)$, the fibre of $H_{\mathbf m , \beta }$ at $v$ is $H( \mm , \beta_v ) _v$. 
By taking symmetric products, this data provides constructible subsets
\[
S^{\nn} (( H_{\mathbf m , \beta })_{\mm \in \NN^\AAA} ) \subset \AAA_{\nn} ( s '  , s  , N , 0 ) 
\]for any $\nn \in \NN^\AAA$,
which themselves define uniformly smooth constructible families of Schwartz-Bruhat functions (of any level $\nn $), denoted by
\[
\left ( (( \mathbf 1 _{H(\m , \beta)} ))_{\m \in S^\nn C} \right ) \in \ExpM_{\AAA_\nn ( s' ,  s , N , 0 ) } .
\]
Such functions correspond to the characteristic functions of $H(\m , \beta)$, for fixed $\beta$ and varying ${\m \in S^\nn C }$.
Therefore, by taking Fourier transforms, one gets uniformly compactly supported family 
\[
(\mathscr F ( \mathbf 1 _{H(\m , \beta)} ))_{\m \in S^\nn C} \in \ExpM_{\AAA_\nn ( \nu - s , \nu - s ' , 0 , N ) } 
\]
where $\nu$ is the conductor of a rational differential form $\omega \in \Omega_{k(C)/k}$. 

The moduli space $M_{\nn}$ of sections ${ \sigma : C \to \XXX }$ such that:
\begin{itemize}
	\item $\sigma$ maps the generic point $\eta_C$ of $C$ to a point of $G_F$;
	\item the image of $C_0$ by $\sigma$ is contained in $\UUU$;
	\item for all $\alpha $ in  $\AAA$, $\deg \left ( \sigma ^* \mathscr L_\alpha \right ) = n_\alpha $,
\end{itemize}
can be rewritten as a disjoint union of constructible subsets $M_\nn^\beta$, 
where for any $\beta \in \prod_{v\in C(k)} \BU_v$,
the subset $M_{\nn, \beta}$ is the set of sections $\sigma \in M_{\nn}$
such that $( \sigma ( \eta_C) , E_{\beta_v} )_v = 1 $ for all $v \in C ( k )$. 
In what follows, 
\[
\mathbf e^{\beta_v}  = ( e_\alpha^{\beta_v} )_{\alpha \in \AAA}   \qquad 
\mathbf e^{\beta}  = \sum_v \mathbf e^{\beta_v} \qquad
\nn^\beta =  \nn  - \mathbf e^{\beta} \in \NN^\AAA 
\]
hence elements of $M_\nn^\beta$ are sections $\sigma \in M_\nn $ such that $\left ( \deg ( \sigma^* \mathscr D_\alpha )\right )_{\alpha \in \AAA} = \nn^\beta $.
Since $\BU $ is the set of vertical components of multiplicity one lying above $S$ or contained in $\UUU$, one has $\mathbf e^{\beta_v} \in \NN^{\AAA_U}$ whenever $v\in C_0 ( k )$. Moreover, if $v\in C_1 ( k)$ then the condition $( \sigma ( \eta_C) , E_{\beta_v} )_v = 1$ is automatically satisfied since $| \BBB_v | = 1$. Thus the partition of $M_\nn$ we are describing here is actually finite, since it only depends on the local intersection numbers with respect to a finite number of vertical divisors. 

By \cite[Lemma 6.2.6.1]{bilu2018motivic}, if $M_\nn^\beta$ is non-empty there is a morphism $M_\nn^\beta \to S^{\nn^\beta} C$ of constructible sets sending a section $\sigma_g$ to the tuple of zero-cycles $\sum_{v\in C (k  )} (( g , \DDD_\alpha ) _v )_{\alpha \in \AAA} [v] $.  
The \textit{coarse height motivic zeta function} $Z(\TT ) $ can be rewritten 
\begin{equation}
Z (\TT ) =  \sum_{\beta \in \prod_{v}  \BU_v} \sum_{\nn^\beta \in \NN^\AAA}   [ M_{\nn^\beta + \mathbf e^\beta}^\beta ] \TT^{  \nn^\beta + \mathbf e^\beta }  = \sum_{\beta \in \prod_{v} \BU_v} \TT^{\mathbf e^\beta }  Z^\beta ( \TT )  . 
\end{equation}
where $  Z^\beta ( \TT )  = \sum_{\nn^\beta \in \NN^\AAA}   [ M_{\nn^\beta + \mathbf e^\beta } ^\beta ] \TT^{  \nn^\beta } $ for every $\beta \in \prod_{v}  \BU_v$ (note again that this last product is finite).

When one considers $S$-integral points and use this function, it appears that it is not precise enough: poles of multiple order appear, which means that we miss relevant invariants which correspond to distinct components of the moduli space of sections. 
In order to deal with it, remark that a section $\sigma : C \to \XXX$ intersects the divisors $\DDD_\alpha$ for $\alpha \in \AAA_D$
 only above the finite set of closed points $S = C \setminus C_0$ 
 and \textit{each point} $s\in S$ gives an \textit{invariant } 
 \[
 \left ((\sigma ( \eta_C ) , \DDD_\alpha )_s\right )_{\alpha \in \AAA_D} .
 \]
This leads us to define for any $(\nn , \beta )$ and $\m_S = {( \m_s )_{s\in S} \in \NN^{\AAA_D \times S}}$ 
the subset $M_{\nn , \m_S}^\beta$ of sections $ \sigma \in M_\nn^\beta  $ 
such that $ (  ( \sigma ( \eta_C) , \mathscr D_\alpha ) )_s )_{s\in S} = \m_S$. 
This gives a decomposition of $M_\nn^\beta$ into a finite disjoint union of definable subsets. 
If $\nn^\beta = (\nn^\beta_U , \nn^\beta_D) $ is an element of $\NN^\AAA =  \NN^{\AAA_U} \times \NN^{\AAA_D}$, 
this  subset can be identified with the fibre of $M_\nn^\beta \to S^{\nn^\beta} C$ 
over the points $\m\in S^{\nn^\beta} C = S^{\nn^\beta_U} C \times S^{\nn^\beta_D }C $ 
whose image by the projection $ S^{\nn^\beta} C  \to S^{\nn^\beta_D }C $ has support in $S=C\setminus C_0$ 
and is given by $\m_S$. 

\begin{mydef}\label{def:refined-zeta-function}Let $\UU = \left ( \UU_0 , ( \UU_s )_{s\in S} \right )$ be a family of indeterminates indexed by the set
\[
\AAA_U  \sqcup ( \AAA_D \times S )  
\] and let 
\[ 
\mathbf e^\beta = \left  ( \sum_{v\in C_0} \mathbf e^{\beta_v} + \sum_{s\in S} \mathbf e^{\beta_s}_{\AAA_U} , \left (  \mathbf e^{\beta_s}_{\AAA_D} \right )_{s\in S} \right ) 
\]
for all $\beta\in \prod_{v} \BU_v$. 

We define the \textit{refined motivic height zeta function} $\ZZZ ( \UU )$ by
\begin{equation}\label{equation:Z(T)-rewritten-with-M-n-beta}
\ZZZ ( \UU )  = \sum_{\beta \in \prod_{v} \BU_v} \UU^{\mathbf e^\beta }  \ZZZ^\beta ( \UU ) 
\end{equation}
where 
\[
\ZZZ^\beta ( \UU )  =   \sum_{ \substack{  \nn_{\AAA_U}^\beta \in \NN^{\AAA_U}  \\  \m_S \in \NN^{\AAA_D \times S}  }  } [ M_{\nn^\beta + \mathbf e^\beta  , \m_S}^\beta ] \UU^{ \nn_U    + \m_S } 
\]
for every $\beta \in \prod_{v} \BU_v$.  
\end{mydef}

\begin{myremark}\label{remark:refined-corse-specialisation}
By definition, the specialisation of $\ZZZ ( \UU ) $ 
obtained by replacing respectively ${\UU_0 = (U_\alpha )_{\alpha \in \AAA_U}}$ by $(T_\alpha )_{\alpha \in \AAA_U}$ 
and ${\UU_s = (U_{\alpha , s} )_{\alpha \in \AAA_D} }$ by $(T_\alpha)_{\alpha \in \AAA_D}$, for every $s\in  S $,
is the coarse zeta function $Z ( \TT ) $.   
Furthermore, if $C_0 = C$ and $\UUU = \XXX$ then this refined function $\ZZZ$ coincides with the coarse one $Z$, since in that case $S=\varnothing$ and $\AAA = \AAA_U$. A similar remark can be done for $\ZZZ^\beta $ and $Z^\beta$. 
\end{myremark}

The map sending a section $\sigma : C \to X$ to $\sigma ( \eta_C ) \in X( F) $ induces an exact correspondence between 
\begin{center}
sections $\sigma \in M_{\nn}^\beta $ such that $\sum_{v\in C(k)} (( g , \mathscr D _\alpha )_v )_{\alpha \in \AAA } [v] = \m  \in S^{\nn^\beta} C $
\end{center}
and 
\begin{center}
elements of $G ( F) \cap H ( \m , \beta ) $. 
\end{center}
Remark that if  $H ( \m , \beta ) $ is non-empty 
then the image of $\m$ by the projection $ S^{\nn^\beta} C  \to S^{\nn^\beta_D} C $ has support in $S$. 

By definition of the summation over rational points (\ref{def:summation-rational-points}), one has the equalities in $\mathscr M_{\kappa ( \m ) }$							
\[
\sum_{x\in \kappa ( \m ) (C) } \mathbf 1_{G(F) \cap H(\m,\beta )} ( x ) = [G(F) \cap H ( \m , \beta ) ] = \left ( M_\nn^\beta \right )_{\m} 
\]
where $\left ( M_\nn^\beta \right )_{\m} $ is the class of the fibre of $M_\nn^\beta \to S^{\nn^\beta }C$ over the schematic point $\m \in S^{\nn^\beta} C $.
Applying the motivic Poisson formula for families (see \S \ref{paragraph:poisson-families}) and following Bilu's computations, we obtain for the coarse subspaces 
\begin{align*} 
\left  [M_\nn^\beta \right ] & = 
\sum_{\m \in S^{\nn ^\beta } (  C ) } [ G(F) \cap H ( \m , \beta )] \\
& = \LL^{(1-g)n} \sum_{\xi \in k(C) ^n } \sum_{\m \in S^{\nn ^\beta} (  C )}   \mathscr F ( \mathbf 1_{H(\m , \beta )} ) (\xi )   
\end{align*}
and for the refined subspaces
\begin{align*} 
\left [   M_{\nn , \m_S}^\beta \right ] &  = \sum_{ \m \in S^{\nn_U ^\beta} ( C  ) }  [G(F) \cap H ( \m + \m_S , \beta ) ] \\
& = \LL^{(1-g)n} \sum_{\xi \in k(C) ^n } \sum_{ \m \in S^{\nn_U^\beta} ( C  ) } \mathscr F ( \mathbf 1_{H(\m + \m_S , \beta )} ) (\xi ) . 
\end{align*}
Since $\left  (  \mathscr F ( \mathbf 1_{H(\m , \beta )} ) \right )_{\m \in S^\nn C} = S^\nn ( (  \mathscr F \mathbf 1_{H(\mm , \beta )}    )_{\mm \in \NN^\AAA} )  $ (see \cite[Proposition 5.4.4.2]{bilu2018motivic}), 
by definition of the motivic Euler product notation
one can write
\begin{align*}
Z^\beta ( \TT ) & = \LL^{(1-g)n} \sum_{\xi \in k(C) ^n } \sum_{\nn \in \NN^\AAA} \sum_{\m \in S^\nn C} \mathscr F ( \mathbf 1_{H(\m , \beta )} ) (\xi )\TT^\nn \\
& = \LL^{(1-g)n} \sum_{\xi \in k(C) ^n } \prod_{v\in C} \left (  \sum_{\m_v \in \NN^\AAA } \mathscr F (  \mathbf 1_{H(\m_v , \beta_v )_v} )   (\xi_v )\TT^{\m_v} \right ) \\
& = \LL^{(1-g)n} \sum_{\xi \in k(C) ^n } \prod_{v\in C} Z^\beta_v ( \TT , \xi ) 
\end{align*}
and since the Euler product is compatible with finite products, we have 
\begin{align*}
\ZZZ^\beta ( \UU )  = & \LL^{(1-g)n} \sum_{\xi \in k(C) ^n } \sum_{\substack{\nn_U \in \NN^{\AAA_U} \\ \nn_D \in \NN^{\AAA_D} }} \sum_{ \substack{ \m \in S^{\nn_U} (C) \\ \m_S \in S^{\nn_D} ( C \setminus C_0 ) } } \mathscr F ( \mathbf 1_{H(\m + \m_S , \beta )} ) (\xi )\UU^{\nn_U + \m_S} \\ 
& =\LL^{(1-g)n} \sum_{\xi \in k(C) ^n } \left ( \prod_{v\in C_0} \left (  \sum_{\m_v \in \NN^{\AAA_U} } \mathscr F (  \mathbf 1_{H(\m_v  , \beta_v )_v} )   (\xi_v )\UU^{\m_v} \right ) \right. \\
& \qquad \times \left. \prod_{s\in S} \left (  \sum_{\m_s \in \NN^{\AAA_U } \times \NN^{ \{ s \} \times \AAA_D } } \mathscr F (  \mathbf 1_{H(\m_s  , \beta_s )_s} )   (\xi_s )\UU^{\m_s} \right ) \right ) \\
& = \LL^{(1-g)n} \sum_{\xi \in k(C) ^n } \prod_{v\in C_0} \ZZZ_v ^{\beta_v} \left  ( \UU_0 , \xi \right )  \prod_{s\in S} \ZZZ_s ^{\beta_s} \left  ( \left ( \UU_0, \UU_s \right ) , \xi \right ).
\end{align*} 
This last decomposition is consistent with the one of $Z^\beta ( \TT ) $ if one applies the specialisation of Remark \ref{remark:refined-corse-specialisation}.
Combined with (\ref{equation:Z(T)-rewritten-with-M-n-beta}) it finally gives 
\[
\ZZZ ( \UU ) 
=
 \LL^{(1-g)n} \sum_{\xi \in k( C ) ^n } \prod_{v\in C} \left  (    \sum_{ \beta_v\in \BU_v  } \UU^{ \mathbf e^{\beta_v} }  \ZZZ^{\beta_v}_v \left  ( \UU , \xi_v \right ) \right ) .
\]
In this expression, the summation over $k(C)^n$ is actually a summation over $ L ( \tilde E  )  ^n$, where $ L ( \tilde E  ) $ is the Riemann-Roch space of the $k$-divisor 
\begin{equation}\label{def:divisor-E-tilde}
\tilde{E} = - \sum_{v} ( \nu_v - s_v ) [ v ] ,
\end{equation}
$s : C \to \ZZ $ being the almost zero function of \cite[Proposition 6.2.3.3]{bilu2018motivic}
and 
$\nu $ being the order function of $\omega \in \Omega_{k(C)/k}$
defined at the beginning of \S \ref{subsubsection:motivic-fourier-transform-in-families}. 

Hence the multivariate zeta functions $\ZZZ ( \UU ) $ and $Z(\TT )$ can be written as 
\begin{align*}
\ZZZ ( \UU ) & =   \LL^{(1-g)n} \sum_{\xi \in V} \ZZZ ( \UU , \xi ) \\
Z ( \TT ) &  =  \LL^{(1-g)n} \sum_{\xi \in V} Z ( \TT , \xi )
\end{align*}
where $V$ is the finite dimensional $k$-vector space $V = L(\tilde E)^n$ and each $\ZZZ(\UU , \xi ) $ (respectively $Z(\TT , \xi )$) can be expressed as a motivic Euler product with local factors 
\[ 
\ZZZ_v (\UU , \xi ) = \sum_{\beta_v \in \BBB_{\UUU , v }} \UU^{\mathbf e^{\beta_v}} \ZZZ_v^{\beta_v} (\UU , \xi )
\]
(resp. $Z_v(\TT , \xi ) $) for any place $v\in C$. 
For  any $\xi $ in $V$, we  will first study the asymptotic behaviour of the $\mm$-th coefficient of the Euler product 
\begin{equation}
\notag
	\prod_{v\in C} \ZZZ_v ( \UU , \xi )
\end{equation}
when $\min_{ ( \alpha , s ) \in \AAA \times ( \{ 0 \} \cup S ) }  (m_{\alpha , s} ) $ tends to infinity. 
When we restrict this product to $C_0$, its local factors coincide with the ones of the coarse version $Z ( \TT , \xi ) $. Therefore it is natural for us to identify them and keep the notations of \cite{bilu2018motivic,chambert2016motivic} for places of $C_0$.
 
Note that this notation is slightly abusive: it actually means that we restrict a certain relative motivic Euler product to a constructible subset of $V$ containing $\xi$. Indeed,
recall that we use the commutativity of the diagram
\[
\begin{tikzcd}
\ExpM_{V \times S^{\nn} C}  \arrow[r] \arrow[d] & \ExpM_{S^{\nn} C} \arrow[d]\\
\ExpM_{V }  \arrow[r] & \ExpM_k
\end{tikzcd}
\]
while summing over $\xi$ and $\mm$ the families $\left  ( \theta_\nn^*   \mathscr F ( \mathbf 1_{H(\m , \beta )} ) \right )_{\m \in S^\nn C} \in \ExpM_{ V \times S^{\nn} C}$. In particular,
one can understand 
\[
\prod_{v\in C} \left ( \sum_{\mathsf m_v \in \NN^\AAA} \FFF (  \mathbf 1_{H(\m_v , \beta_v )_v} )   \TT^{\m_v}  \right )  = \sum_{\nn \in \NN^\AAA} \sum_{\m \in S^{\nn } (  C )}   \mathscr F ( \mathbf 1_{H(\m , \beta )} ) \TT^\nn 
\]
as a relative motivic Euler product with coefficients in $\ExpM_{V} $ (in which we omit $\theta_\nn^*$). Its convergence will be studied with respect to a finite constructible partition of $V$ (described in \S \ref{subsection:uniform-convergence}).
Since the motivic sum 
\[
\sum_{\xi \in V} : \ExpM_{V}  \to \ExpM_k 
\]
is a morphism, it is compatible with such a cutting process. 
In the end, the remaining task will be to justify that one can actually permute the sum over rational points $\sum_{\xi \in k(C)^n }$ and the limit of the coefficients of $\ZZZ ( \UU , \xi ) $. 


It is shown in \cite{chambert2016motivic} that the local factor of $Z ( \TT ) $ can be rewritten as a motivic integral over the arc spaces $\Omega_v ( A , \beta ) $ defined in \S \ref{subsection:back-to-our-setting}. When $v\in C_0$, this procedure gives 
\[
Z_v ( \TT , \xi ) = \sum_{\beta_v \in \BU_v }
 \TT^{e^{\beta_v} } Z_v^{\beta_v} ( \TT , \xi ) = \sum_{\substack{A\subset \AAA_U \\ \beta_v \in \BU_v }}
 \TT^{e^{\beta_v} } \LL^{\rho^{\beta_v}} 
\int_{\Omega_v( A ,\beta_v )} \prod_{\alpha \in A} \left ( \LL^{\rho_\alpha } T_\alpha \right ) ^{\ord_v ( x_\alpha ) } \ee ( \langle x , \xi \rangle ) \mathrm{d}x  
\]
(see  \cite[\S 6.1-2]{chambert2016motivic} and \cite[\S 6.3.1]{bilu2018motivic}). The $\rho$-weight-linear convergence of the local factor at $T_\alpha = \LL^{- \rho_\alpha ' }$ is actually proved in \cite[Lemma 6.3.5.1]{bilu2018motivic}. 
Above places $s\in S$, the expression looks similar, replacing $\AAA_U$ by $\AAA$ 
\[
Z_s ( \TT , \xi ) = 
\sum_{\beta_s \in \BU_s }
 \TT^{e^{\beta_s} } Z_s^{\beta_s} ( \TT , \xi ) 
= \sum_{\substack {A\subset \AAA \\ \beta_s \in \BU_s } }
 \TT^{e^{\beta_s} }
\int_{\Omega_s( A ,\beta )} \prod_{\alpha \in A} \left ( \LL^{\rho_\alpha } T_\alpha \right ) ^{\ord_s ( x_\alpha ) } \ee ( \langle x , \xi \rangle ) \mathrm{d}x  .
\] 
\begin{myremark}\label{remark:dependance-of-local-factors}
Remark that for any character $\xi$ and place $v \in C_0 ( k )$  we have a decomposition 
\[
Z_v^{\beta_v} ( \TT , \xi ) = \sum_{A\subset \AAA_U  }    \TT^{e^{\beta_v} } Z_{v,A}^{\beta_v} ( \TT  ,  \xi ) 
\]
where $Z_{v,A}^{\beta_v}  ( \TT , \xi ) $ only depends on the indeterminates $(T_\alpha )_{\alpha \in A}$.

This remark remains valid for places $s$ of $S$, if one just  replaces $\AAA_U$ by $\AAA$ in the decomposition above, and adapting this decomposition to $\ZZZ ( \UU , \xi )$ is straightforward. 
\end{myremark}

\subsection{Convergence of Euler products in our setting} 
As the reader familiar with Tauberian theorem in complex analysis might guess, 
the asymptotic behaviour of the coefficients of $Z(T)$ is closely linked to the poles of the series. 
The precise study of these poles has been done by Bilu in \cite[Chapter 6]{bilu2018motivic}. 
In this subsection we will both recall and adapt the relevant results for our purpose; 
in particular, we precisely check weight-linear convergence of these products, 
which is locally uniform with respect to a stratification of the space of characters. 
This information will be crucial both for the control of various error terms appearing through the last section of this paper and for the final step of our proof.   

Since the motivic Euler product notation is compatible with finite products, it will be enough to prove convergence over $C_0$ (and even $C_1$). As we already pointed out, in this situation the local factors of the coarse zeta function and refined zeta function coincide. Therefore the content of this paragraph stays valid if one replaces everywhere $ Z_v ( \TT , \xi )  $ by $\ZZZ_v ( \UU , \xi ) $ and the inderterminates $T_\alpha$ by $U_\alpha$, $\alpha \in \AAA_U$, in Notation \ref{notation:F_v(T,0)} and \ref{notation:F_v(T,xi)} below.

\subsubsection{Trivial character} In this paragraph we study the main term of the motivic Zeta function. 
\begin{mynotation}\label{notation:F_v(T,0)}
For any place $v$  of $C$ we set 
\[ 
F_v ( \TT , 0 ) =  Z_v ( \TT , 0 ) \prod_{\alpha \in \AAA_U } ( 1 - \LL^{\rho_\alpha  - 1 }T_\alpha ) . \] 
\end{mynotation}
One can directly compute the local factors corresponding to the trivial character \cite[\S 6]{chambert2016motivic}. First assume that $v$ is a place of $C_0$. Then,
\begin{align*}
Z_v ( \TT , 0 )  & = \sum_{\beta \in \BU_v }
 \TT^{e^{\beta_v} } Z_v^{\beta_v} ( \TT , 0 ) \\
 & =  \sum_{\beta_v \in \BU_v }  \TT^{e^{\beta_v} } \LL^{\rho^\beta} \sum_{A\subset \AAA_U } [\Delta_v ( A , \beta ) ] \LL^{-n+|A|} ( 1 - \LL^{-1})^{|A|} \prod_{\alpha\in A} \frac{\LL^{\rho_\alpha -1}T_\alpha}{1- \LL^{\rho_\alpha -1}T_\alpha} 
\end{align*}
In case $v$ is a place of the dense subset $C_1\subset C_0$, since $\BBB_v = \{ \beta_v \}$ and both $e_\alpha^\beta$ and $\rho_{\beta}$ equal zero, this expression becomes slightly nicer: 
\[
Z_v ( \TT , 0 ) = Z_v^{\beta_v} ( \TT , 0 ) =  \sum_{\substack{  A\subset \AAA_U }} [\Delta_v ( A ) ] \LL^{-n+|A|} ( 1 - \LL^{-1})^{|A|} \prod_{\alpha\in A} \frac{\LL^{\rho_\alpha -1}T_\alpha}{1- \LL^{\rho_\alpha -1}T_\alpha} .
\]
Concerning places of $S=C\setminus C_0$, the local factor becomes
\[
Z_v ( \TT , 0 ) = \sum_{\substack{ \beta \in \BU_v \\ A\subset \AAA }}  \TT^{e^{\beta_v} } \LL^{\rho^\beta} [\Delta_v ( A , \beta ) ] \LL^{-n+|A|} ( 1 - \LL^{-1})^{|A|} \prod_{\alpha\in A} \frac{\LL^{\rho_\alpha -1}T_\alpha}{1- \LL^{\rho_\alpha -1}T_\alpha} .
\]

We will use these expressions to prove the following proposition. 
\begin{myptn} \label{proposition:convergence-residue-of-Zeta-0}
The product 
\[
\prod_{v\in C_0} F_v ( \TT  , 0 ) 
\]
is $\rho$-weight-linearly convergent at $T_\alpha = \LL^{-\rho_\alpha }$ and the resulting sum is a non-zero effective element of $\widehat{ \mathscr M_k}$.  
\end{myptn}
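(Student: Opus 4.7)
The plan is to reduce to an application of the convergence criterion of Proposition~\ref{proposition:linear-conv-criterion} after isolating the infinite-product part over the good-reduction locus. I split
\[
\prod_{v \in C_0} F_v(\TT, 0) = \Bigl(\prod_{v \in C_0 \setminus C_1} F_v(\TT, 0)\Bigr) \cdot \prod_{v \in C_1} F_v(\TT, 0).
\]
The first factor is a finite product of polynomials; direct substitution in the explicit formula for $Z_v(\TT, 0)$ exhibits each $F_v(\LL^{-\rho}, 0)$ as a non-negative combination of classes $[\Delta_v(A, \beta_v)]$ by factors $\LL^{-\ast}(1 - \LL^{-1})^\ast$, hence an effective element of $\mathscr M_k$. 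The hypothesis $\UUU(\OOO_v) \neq \varnothing$ for $v \in C_0$ guarantees that some stratum $\Delta_v(A, \beta_v)$ is non-empty (by the discussion at the end of \S\ref{subsection:back-to-our-setting}), so each such local factor is non-zero. Convergence is therefore only an issue for the infinite product over $C_1$.

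For $v \in C_1$ the good-reduction hypotheses recalled in \S\ref{subsection:back-to-our-setting} force $\XXX_v$ to be a smooth equivariant compactification of $\mathbf{G}_{a, k}^n$, whose group part $\Delta_v(\emptyset) = \XXX_v \setminus \bigcup_{\alpha \in \AAA} \DDD_{\alpha, v}$ is $\mathbf{A}^n_k$. The constant term of the explicit formula then reads
\[
F_v(0, 0) = [\Delta_v(\emptyset)] \LL^{-n} = 1,
\]
so $\prod_{v \in C_1} F_v(\TT, 0)$ is a bona fide motivic Euler product in the sense of Notation~\ref{def:euler-product}.

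Expanding $F_v(\TT, 0)$ as a polynomial of degree at most one in each $T_\alpha$, the coefficient of $\TT^\mm$ (which vanishes unless $m_\alpha \in \{0, 1\}$ for all $\alpha$) is
\[
X_{v, \mm} = \LL^{\langle \rho, \mm\rangle - s} \sum_{A \subset \mathrm{supp}(\mm)} (-1)^{s - |A|} [\Delta_v(A)] \LL^{|A| - n} (1 - \LL^{-1})^{|A|},
\]
with $s = |\mathrm{supp}(\mm)|$. For $s \geq 2$ a direct term-by-term estimate using $\dim \Delta_v(A) \leq n - |A|$ already yields $w_{C_1}(X_{v, \mm}) \leq 2\langle \rho, \mm\rangle - 2s + 1 \leq 2\langle \rho, \mm\rangle - 3$. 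For $s = 1$ the term-by-term bound only gives $2\langle \rho, \mm\rangle - 1$, but the two top-weight pieces $[\Delta_v(\emptyset)] = \LL^n$ and $[\Delta_v(\{\alpha_0\})] \LL$ are both classes of smooth irreducible dimension-$n$ families over $C_1$, so their difference has weight $\leq 2n$ by item~(5) of Proposition~\ref{proposition:properties-of-the-weight}; this recovers the missing unit and gives $w_{C_1}(X_{v, \mm}) \leq 2\langle \rho, \mm\rangle - 2$. With $w(C_1) = 2$ this verifies the small-$\mm$ hypothesis of Proposition~\ref{proposition:linear-conv-criterion} with any $\varepsilon \in (0, 1/2)$; the large-$\mm$ hypothesis is vacuous since $X_{v, \mm}$ vanishes once some $m_\alpha \geq 2$.

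Applying Proposition~\ref{proposition:linear-conv-criterion} at $\mathfrak a_\alpha = \LL^{-\rho_\alpha}$, which has weight $-2\rho_\alpha \leq -4$ (using $\rho_\alpha \geq 2$), simultaneously yields the $\rho$-weight-linear convergence and the non-vanishing of $\prod_{v \in C_1} F_v(\LL^{-\rho}, 0)$ in $\widehat{\mathscr M_k}$. Combining with the finite bad-place factor establishes the statement, and effectivity in the limit follows from the effectivity of each $F_v(\LL^{-\rho}, 0) = (1 - \LL^{-1})^{|\AAA_U|} [\XXX_v] \LL^{-n}$ (for $v \in C_1$) together with the stability of effective classes under the weight-topology limits. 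The main subtlety lies in the $s = 1$ weight estimate: the naive bound is short by exactly one unit, and the improvement is enabled by the very cancellation that the correction factor $\prod_\alpha (1 - \LL^{\rho_\alpha - 1} T_\alpha)$ was designed to extract, echoing the arguments of Bilu's Lemma~6.3.5.1.
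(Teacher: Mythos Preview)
Your proof is correct and follows essentially the same approach as the paper: reduce to $C_1$, extract the crucial weight drop on the linear coefficient via Proposition~\ref{proposition:properties-of-the-weight}(5) (the paper uses $[\DDD_{\alpha,v}]-\LL^{n-1}$ where you use $[\Delta_v(\{\alpha\})]\LL-\LL^n$, which differ only by lower-dimensional pieces), and apply the criterion of Proposition~\ref{proposition:linear-conv-criterion}. One small slip: in invoking Proposition~\ref{proposition:linear-conv-criterion} you should take $\mathfrak a_\alpha = \LL^{-1}$ (so that $\mathfrak a_\alpha^{\rho_\alpha m_\alpha} = \LL^{-\rho_\alpha m_\alpha}$ matches $T_\alpha^{m_\alpha}\big|_{T_\alpha = \LL^{-\rho_\alpha}}$), not $\mathfrak a_\alpha = \LL^{-\rho_\alpha}$; the required inequality $w(\mathfrak a_\alpha)=-2<-2(1-\delta)$ still holds and your conclusion is unaffected.
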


\begin{proof}
The proof is essentially the same as the one of Proposition 6.3.5.2 of \cite{bilu2018motivic}. We write down the details here for the sake of completeness. In particular, it is important to check that we are in the situation of Proposition \ref{proposition:linear-conv-criterion}, in order to obtain linear convergence with respect to $\rho $. 

It is enough to prove convergence for places of the dense open subset $C_1$ of $C_0$. Furthermore one can assume $\AAA_U = \AAA$.
For a place $v\in C_1$ the local factor $F_v ( \TT, 0 ) =  \prod_{\alpha \in \AAA} ( 1 - \LL^{\rho_\alpha -1} \mathbf T ) Z_v ( \TT , 0 ) $ is given by 
\[
F_v ( \TT , 0 )  = \sum_{A\subset \AAA } [\Delta_v ( A ) ] \LL^{-n+|A|} ( 1 - \LL ^{-1} )^{|A|} \prod_{\alpha \in A } \LL^{\rho_\alpha -1} T_\alpha \prod_{\alpha \in \AAA \setminus A} ( 1 - \LL^{\rho_\alpha - 1 }T_\alpha ) 
\]
If we expand the product over $\AAA \setminus A$ for any $A\subset \AAA$ we get 
\[
\prod_{\alpha ' \in \AAA \setminus A } ( 1 - \LL^{\rho_{\alpha '} - 1 }T_{\alpha ' } ) = \sum_{B\subset \AAA \setminus A } (-1)^{|B|} \prod_{\alpha ' \in B } \LL^{\rho_{\alpha ' } - 1 } T_{\alpha ' }
\]
and 
\[
F_v ( \TT , 0 )  = \sum_{A\subset \AAA }  \sum_{B\subset \AAA \setminus A } (-1)^{|B|} [\Delta_v ( A ) ] \LL^{-n+|A|} ( 1 - \LL ^{-1} )^{|A|} \prod_{\alpha \in A } \LL^{\rho_\alpha -1} T_\alpha  \prod_{\alpha ' \in B } \LL^{\rho_{\alpha ' } - 1 } T_{\alpha ' } . 
\]
Remark that $[\Delta_v ( \varnothing ) ] = [\mathbf G_a ^n ] = \LL^n$.
Then the previous sums can be decomposed with respect to the cardinalities of  the sets $A$ and $B$: 
\begin{align*}
F_v ( \TT , 0 ) & =  1  -  \sum_{\alpha '  \in \AAA } \LL^{\rho_{\alpha ' } -1}  T_{\alpha ' }  +  \sum_{\substack{B\subset \AAA \\ |B|\geqslant 2 } } (-1)^{|B|}   \prod_{\alpha ' \in B } \LL^{\rho_{\alpha ' } - 1 } T_{\alpha ' }  \tag{$A=\varnothing$} \\
& + \sum_{\alpha \in \AAA }   [\Delta_v ( \{ \alpha \} ) ] \LL^{-n+1} ( 1 - \LL ^{-1} ) \LL^{\rho_\alpha -1} T_\alpha  \tag{$|A|=1$, $B=\varnothing$}  \\
& + \sum_{\alpha \in \AAA }  \sum_{ \substack{B\subset \AAA \setminus \{ \alpha \} \\ B \neq \varnothing } } (-1)^{|B|} [\Delta_v ( \{ \alpha \} ) ] \LL^{-n+1} ( 1 - \LL ^{-1} )  \LL^{\rho_\alpha -1} T_\alpha  \prod_{\alpha ' \in B } \LL^{\rho_{\alpha ' } - 1 } T_{\alpha ' }  \tag{$|A|=1$, $B\neq \varnothing $}   \\
&  +  \sum_{\substack{A\subset \AAA \\ |A|\geqslant 2 } }  \sum_{B\subset \AAA \setminus A } (-1)^{|B|} [\Delta_v ( A ) ] \LL^{-n+|A|} ( 1 - \LL ^{-1} )^{|A|} \prod_{\alpha \in A } \LL^{\rho_\alpha -1} T_\alpha  \prod_{\alpha ' \in B } \LL^{\rho_{\alpha ' } - 1 } T_{\alpha ' } \tag{$|A|\geqslant 2$} .
\end{align*}
The definition of $\Delta_v ( A ) $ gives the equality of classes 
\[
 [\mathscr D_{\alpha , v} ] =  [\Delta_v ( \{ \alpha \} ) ] + \sum_{\substack{ A' \subset \AAA\setminus \{ \alpha \} \\ A' \neq \varnothing} } [\Delta_v ( A ' \cup \{ \alpha \} ) ] 
\] for every $\alpha \in \AAA$.
Finally 
\[
F_v ( \TT , 0 ) = 1 + \sum_{ \alpha \in \AAA } \left ( [\mathscr D_{\alpha , v} ] - \LL^{n-1} \right )\LL^{-n} \LL^{\rho_\alpha} T_\alpha  + P_v ( \TT )
\]
where $P_v ( \TT ) $ is the polynomial
\begin{align*}
P_v ( \TT )  & = \sum_{\substack{B\subset \AAA \\ |B|\geqslant 2 } } (-1)^{|B|}   \prod_{\alpha ' \in B } \LL^{\rho_{\alpha ' } - 1 } T_{\alpha ' } \\
& - \sum_{\alpha \in \AAA }   \sum_{\substack{ A' \subset \AAA\setminus \{ \alpha \} \\ A' \neq \varnothing } } [\Delta_v ( A ' \cup \{ \alpha \} ) ] \LL^{-n+1} ( 1 - \LL ^{-1} ) \LL^{\rho_\alpha -1} T_\alpha \\
& + \sum_{\substack{A\subset \AAA \\ A \neq \varnothing  } }  \sum_{ \substack{ B\subset \AAA \setminus A  \\ |A|=1 \Rightarrow B \neq \varnothing } } (-1)^{|B|} [\Delta_v ( A ) ] \LL^{-n+|A|} ( 1 - \LL ^{-1} )^{|A|} \prod_{\alpha \in A } \LL^{\rho_\alpha -1} T_\alpha  \prod_{\alpha ' \in B } \LL^{\rho_{\alpha ' } - 1 } T_{\alpha ' }  .
\end{align*}
Let us analyse the dimensions of the coefficients of this polynomial term by term, and compare them to the multidegrees of the corresponding monomials. In the sum of the first line, since $|B|\geqslant 2$, the corresponding coefficient has dimension at most $-2 + \sum_{\alpha \in B} \rho_\alpha $. 
Here the corresponding monomial is $\TT ^\mm $ with $m_\alpha = 1$ if $\alpha \in B $ and $0$ otherwise; thus  $\sum_{\alpha \in B } \rho_\alpha  = \langle \rho , \mm \rangle $. 
Concerning the sum of the second line, 
since we assumed that $\XXX$ is a good model, 
the dimension of $\Delta_v ( A ' \cup \{ \alpha \} ) $ 
is at most $n - 2 $ and the whole coefficient has dimension at most $ \rho_\alpha -2  $. 
This remark still applies to the coefficients of the third and last sum when $|A|\geqslant 2$:
they have dimension at most $-2 + \sum_{\alpha \in A\cup B} \rho_\alpha $. 
It corresponds to the monomial $\mathbf T^\mm  $  with $m_\alpha = 1$ if $\alpha \in A \cup B $ and $0$ otherwise; thus  $\langle \rho  , \mm \rangle = \sum_{\alpha \in A\cup B} \rho_\alpha$. 
Finally, in the case $|A|=1$, then $|A\cup B|\geqslant 2 $ 
and the relation 
\[ \prod_{\alpha \in A \cup B } \LL^{\rho_\alpha -1} = \LL^{ \sum_{\alpha \in A\cup B} \rho_\alpha  - |A \cup B|} \]
ensures that the dimension of the corresponding coefficient is again at most $-2 + \sum_{\alpha \in A\cup B} \rho_\alpha $. 

Remark that our computations do not depend on $v$ in the sense that there exist polynomials $F ( \TT , 0 )$ and $P ( \TT ) $ with coefficients in $\ExpM_{C_1}$, such that their pull-backs by $v$ are respectively $F_v$ and $P_v$. 
Therefore we will be able to use Proposition \ref{proposition:linear-conv-criterion} 
with $X=C_1$, ${ \varepsilon = \frac{1}{2}} $, ${ M=\deg ( P ( (T^{\rho_\alpha} )_{\alpha \in \AAA} )) }$ and $\beta = 0$ 
so that the first condition
of Proposition \ref{proposition:linear-conv-criterion} 
becomes in our case 
\begin{equation}\label{inequality:weight-of-polynomial-P(xi)}
w_{C_1} (  \mathfrak c_\mm ) \leqslant 2 \left (  \langle \rho , \mm \rangle - 1 \right )  . 
\end{equation}
By the last property of the weight recalled in Proposition \ref{proposition:properties-of-the-weight}, we  obtain the crucial argument of this proof, which is the inequality
\[
w_{C_1} \left ( \left ( [\mathscr D_{\alpha , v} ] - \LL^{n-1} \right )\LL^{-n} \LL^{\rho_\alpha}  \right ) \leqslant 2 ( n - 1 ) - 2 n + 2 \rho_\alpha = 2 ( \rho _\alpha - 1 ).
\]
As Bilu points out in \cite[Remarks 6.3.4.2-3]{bilu2018motivic}, the bounds on dimensions obtained in the previous paragraph ensure that the coefficients of multidegree $\mm $ of $P_v ( \TT ) $ satisfy (\ref{inequality:weight-of-polynomial-P(xi)}). 
Thus we can apply Proposition \ref{proposition:linear-conv-criterion} and $\rho$-weight-linear convergence over $C_1$ at $T_\alpha = \LL^{-\rho_\alpha}$ follows. Since the Euler product notation is compatible with finite products and since we already know that the local factors converge weight-linearly, applying Lemma \ref{lemma:weight-convergence-product-stability} we deduce that the $\rho$-weight-linear convergence holds for the product over $C_0$. 
\end{proof}

\begin{myremark}\label{remark:interpretation-local-term-volume}
Since for any place $v\in C_1$ the fibre $\UUU_v $ is the disjoint union of all the $\Delta_v ( A) $ for $A\subset \AAA_U$, and in general $E_{\beta , v }$ is the disjoint union of all the $\Delta_v ( A , \beta ) $, 
it is straightforward to check that the value at $T_\alpha =\LL^{-\rho_\alpha '} $ of the motivic Euler product $\prod_{v\in C_0} \TT^{\mathbf e^{\beta_v}} F_v^\beta ( \TT , 0 ) $ is 
\begin{align*}
& \prod_{v\in C_0} \left ( \TT^{\ee^{\beta_v}} Z_v^{\beta_v} ( \TT , 0 ) \prod_{\alpha \in \AAA_U} ( 1 - \LL^{\rho_\alpha -1} T^{\rho_\alpha}  ) \right ) \left  (  ( \LL^{-\rho_\alpha } )_{\alpha \in \AAA} \right ) \\
& =  \prod_{v\in C_1 } \left ( 
 (1-\LL^{-1})^{\rg (\Pic (U))} \frac{[\UUU_v]}{\LL^{n}} 
 \right) 
\times \prod_{v\in C_0 \setminus C_1}   \left  ( (1-\LL^{-1})^{\rg (\Pic (U)) }   \LL^{\rho^{\beta_v} - \langle \rho , \mathbf e^\beta \rangle} \frac{[ E_{\beta_v} ^\circ ]}{\LL^n }  \right ) . \\
\end{align*} 
The local term $ \LL^{\rho^{\beta_v} - \langle \rho , \mathbf e^\beta \rangle } [ E_{\beta_v} ^\circ ]\LL^{-n } $ (which is just $[\UUU_v]\LL^{-n} $ if $v\in C_1$) can be interpreted as the motivic integral 
\[
\int_{G(F_v , \beta_v )} \LL^{-(g, \LLL_{\rho ' } ) } | \omega_X | 
\]
over the adelic space $G(F_v , \beta_v ) = \{ g \in G ( F_v ) \mid ( g , E_{\beta_v } ) = 1 \} $. By \cite[Lemma 6.1.1]{chambert2016motivic} this integral can we rewritten as the motivic integral over arc spaces
\begin{align*}
\int_{\LLL ( \XXX_v , E_{\beta_v} ) } \LL^{-\ord_{\LLL_{\rho ' }} ( x  ) } \LL^{- \ord_{\omega_{X}} ( x  )  } & = \int_{\LLL ( \XXX_v , E_{\beta_v} ) }  \LL^{-\sum_{\alpha \in \AAA_U} \rho_\alpha \left ( \ord_v ( x_\alpha ) + e_\alpha^{\beta_v} \right )} \LL^{\rho^{\beta_v} + \sum_{\alpha \in \AAA_U} \rho_\alpha \ord_v ( x_\alpha )} \\
& = \int_{\LLL ( \XXX_v , E_{\beta_v} ) }  \LL^{\rho^{\beta_v} -  \langle \rho , \mathbf e^\beta \rangle } \\
& = \LL^{\rho^{\beta_v} -  \langle \rho , \mathbf e^\beta \rangle } \vol ( \LLL ( \XXX_v , E_{\beta_v} ) ) \\
& = \LL^{\rho^{\beta_v} -  \langle \rho , \mathbf e^\beta \rangle } \frac{[ E_{\beta_v} ^\circ ]}{\LL^n } .
\end{align*}
This comes from the fact that $\XXX ( \mathcal O_v ) = X ( F_v ) $ so that one can view $G( F) $ as a subset of $\XXX ( \mathcal O_v ) $ and any Schwartz-Bruhat function on $G(F_v)$ as a motivic function on $\LLL ( \XXX ) $, see \cite[\S 6.1]{chambert2016motivic}.
\end{myremark}

\subsubsection{Non-trivial characters} 

Given a place $v\in C$ and a non-trivial character $\xi \in G(F)^\vee$,
the linear form $x \in G ( F ) \mapsto \langle x , \xi \rangle $ 
can be seen as a rational function $f_{\xi}$ on $X$ whose divisor of poles has support contained in the union of the $D_\alpha $. We denote by $d_\alpha ( \xi) $ the order of the pole of $f_\xi$ with respect to $D_\alpha$
and define a subset of $\AAA_U$ by setting
\[
\AAA_U^0 (\xi ) = \{ \alpha \in \AAA_U \mid d_\alpha (\xi ) = 0 \} . 
\]
If $U\neq X$, this is automatically a proper subset of $\AAA$.
Otherwise if $U = X$, since $X$ is projective and $\xi$ is non-trivial, this is a proper subset of $\AAA$. 
\begin{mynotation}\label{notation:F_v(T,xi)}
For any place $v$ and any non-trivial character $\xi$, we write
\[
F_v ( \TT , \xi_v  ) = Z_v ( \TT , \xi_v ) \prod_{\alpha \in \AAA_U ^0 ( \xi ) } \left ( 1 - \LL^{\rho_\alpha -1 } T_\alpha \right ).
\]
\end{mynotation}
\begin{myptn}\label{proposition:convergence-residue-of-Zeta-xi}
The product 
\[
\prod_{v\in C_0}  F_v ( \TT  , \xi_v ) 
\]
is weight-linearly convergent with respect to $\rho$ at $T_\alpha = \LL^{-\rho_\alpha }$ and the resulting sum is a non-zero effective element of $\widehat{ \mathscr M_k}$.  
\end{myptn}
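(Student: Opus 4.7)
The plan is to mirror the strategy of Proposition \ref{proposition:convergence-residue-of-Zeta-0}, reducing to the dense open subset $C_1$ of places of good reduction, and applying the convergence criterion Proposition \ref{proposition:linear-conv-criterion} to control the resulting Euler product. Since $C_0 \setminus C_1$ is a finite set of points, the corresponding finite product of local factors can be treated as a single element of $\ExpM_k$ of bounded weight, and combined with the convergent product over $C_1$ via Lemma \ref{lemma:weight-convergence-product-stability}; so it suffices to establish weight-linear convergence over $C_1$.

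For $v \in C_1$, I would start from the integral expression
\[
Z_v(\TT, \xi_v) = \sum_{A \subset \AAA_U}[\Delta_v(A)]\LL^{-n+|A|}(1-\LL^{-1})^{|A|} \int \prod_{\alpha \in A}(\LL^{\rho_\alpha - 1}T_\alpha)^{\ord_v(x_\alpha)} \ee(\langle x, \xi_v\rangle)\,\dd x,
\]
and split the sum over $A$ according to whether $A \subset \AAA_U^0(\xi)$ or not. For $\alpha \in A \setminus \AAA_U^0(\xi)$, i.e.\ when $d_\alpha(\xi) \geqslant 1$, the Fourier kernel $\ee(\langle x, \xi_v\rangle)$ oscillates along the $x_\alpha$ direction, so the integral vanishes once $\ord_v(x_\alpha)$ exceeds a bound depending only on $d_\alpha(\xi)$ and the local conductor of $\omega$; this truncates the series in the corresponding variable $T_\alpha$ to a polynomial. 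The remaining terms, indexed by $A \subset \AAA_U^0(\xi)$, have exactly the form appearing in the proof of Proposition \ref{proposition:convergence-residue-of-Zeta-0}: factoring out $\prod_{\alpha \in \AAA_U^0(\xi)}(1 - \LL^{\rho_\alpha - 1}T_\alpha)$ and expanding, one can write $F_v(\TT, \xi_v) = 1 + G_v(\TT, \xi_v)$ where $G_v$ is a polynomial in $\TT$ (of degree bounded independently of $v$, thanks to the uniform bound on the truncation).

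Next I would bound the weight of the multidegree-$\mm$ coefficient of $G_v$. The linear terms coming from $\alpha \in \AAA_U^0(\xi)$ contribute, as in Proposition \ref{proposition:convergence-residue-of-Zeta-0}, a factor $([\mathscr D_{\alpha,v}] - \LL^{n-1})\LL^{-n}\LL^{\rho_\alpha}$ of weight at most $2(\rho_\alpha - 1)$; the linear terms coming from $\alpha \notin \AAA_U^0(\xi)$ are absent in the expansion of $F_v$ because the oscillation with respect to the character forces the corresponding weights to be strictly smaller (as in \cite[Lemma 6.3.5.1]{bilu2018motivic}). Higher-order coefficients satisfy the dimension inequality
\[
w_{C_1}(\mathfrak c_\mm) \leqslant 2(\langle \rho, \mm\rangle - 1)
\]
by the strict normal crossings hypothesis on the good model $\XXX$, as in Proposition \ref{proposition:convergence-residue-of-Zeta-0}. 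Applying Proposition \ref{proposition:linear-conv-criterion} with $X = C_1$, $\varepsilon = \tfrac{1}{2}$, $\beta = 0$ and $M$ the degree of the polynomial $G(\TT, \xi)$ then yields the desired $\rho$-weight-linear convergence of $\prod_{v \in C_1}F_v(\TT, \xi_v)$ at $T_\alpha = \LL^{-\rho_\alpha}$.

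The main obstacle is the non-vanishing of the limit. Here I would invoke the last clause of Proposition \ref{proposition:linear-conv-criterion}: with the bounds above, each factor $F_v(\LL^{-\rho}, \xi_v)$ is equal to $1$ plus a term of strictly negative weight, hence is a unit in $\widehat{\mathscr M_k}$; the infinite product is therefore non-zero and effective, being a convergent product of such units. Effectivity on the nose follows by noting that the truncated sums $\sum_{\mm' \leqslant \mm}$ are effective, as they are motivic classes of constructible sets.
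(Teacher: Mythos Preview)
Your overall strategy (reduce to $C_1$, bound the weights of the coefficients of $F_v(\TT,\xi)$, apply Proposition \ref{proposition:linear-conv-criterion}) is the right one, and matches the paper. But there is a genuine gap in the key step.

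You claim that for $\alpha$ with $d_\alpha(\xi)\geqslant 1$ the oscillation of $\ee(f_\xi(x))$ forces the integral to vanish once $\ord_v(x_\alpha)$ is large enough, so that the local factor becomes a \emph{polynomial} in those $T_\alpha$. This is not correct. The oscillation argument only applies on the locus where $f_\xi$ really has a pole along $D_\alpha$, i.e.\ for arcs based at points of $D_\alpha^\circ \setminus \Xi_\xi$ (here $\Xi_\xi$ is the closure of the zero hyperplane $\{\langle x,\xi\rangle=0\}$). For arcs based at $D_\alpha^\circ \cap \Xi_\xi$ the zero and the pole of $f_\xi$ can compensate, the exponential no longer gives the needed cancellation, and you get a genuine infinite series in $T_\alpha$. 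The paper handles this by bounding the weight of the $m$-th coefficient via the motivic volume of the stratum of arcs of order $m$ based on $D_\alpha^\circ\cap\Xi_\xi$: since $\dim(D_\alpha^\circ\cap\Xi_\xi)\leqslant n-2$, one obtains $w_{C_1}(\text{coeff})\leqslant 2cm\rho_\alpha - 1$ with $c=\max_{\alpha}(1-\tfrac{1}{2\rho_\alpha})<1$. The same volume trick is needed for the $|A|\geqslant 2$ terms. These bounds feed into the \emph{second} regime of Proposition \ref{proposition:linear-conv-criterion} (with parameter $\alpha=c$, $\beta=0$, and $M$ arbitrary), not just the first; your application with ``$M$ the degree of the polynomial'' has no meaning because there is no polynomial.

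A secondary point: your displayed formula for $Z_v(\TT,\xi_v)$ already factors out $[\Delta_v(A)]\LL^{-n+|A|}(1-\LL^{-1})^{|A|}$ as in the $\xi=0$ case. When $\xi\neq 0$ the exponential $\ee(f_\xi(x))$ depends on the full arc, not only on the orders $\ord_v(x_\alpha)$, so this factorisation is not available; one has to keep the integral over $\Omega_v(A,\beta_v)$ and analyse it via the isomorphism $\Theta$ and the stratification by $\Xi_\xi$, as the paper does. Once that analysis is carried out, the linear terms for $\alpha\in\AAA_U^0(\xi)$ take the form $([D_\alpha^\circ\setminus\Xi_\xi]-\LL^{n-1})\LL^{1-n}\LL^{\rho_\alpha-1}T_\alpha$ (not $[\mathscr D_{\alpha,v}]$), and everything else goes into a tail $P_v(\TT,\xi)$ whose coefficients obey the $c$-bound above.
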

The proof of this proposition we give here consists of a summary of the arguments of the proof of Proposition 6.3.5.3 in Bilu's thesis  \cite{bilu2018motivic}, since our main interest lies in obtaining weight-linear convergence of $\prod_{v\in C_0} F_v ( \TT, \xi ) $ at $T_\alpha = \LL^{-\rho_\alpha}$. 
\begin{proof}[Proof of Proposition \ref{proposition:convergence-residue-of-Zeta-xi}]
Once again it is enough to check convergence on the dense subset $C_1 \subset C$.  In order to apply Proposition \ref{proposition:linear-conv-criterion}, one has to bound the weight of the coefficients of $F_v (\TT, \xi )$. We start by doing this for $Z_v ( \TT , \xi ) $.

First, the divisor of $f_\xi$ can be written 
 \[
 \ddiv ( f_\xi ) = \Xi_\xi - \sum_{\alpha \in \AAA } d_\alpha  (\xi ) D_\alpha 
 \]
 where $\Xi_\xi$ is the Zariski-closure of $\{ \langle x , \xi \rangle = 0 \}$ in $X$.  
 Over a place $v\in C_1$, one has $\BU_v = \{ \beta_v \}$ and the local factor $Z_v(\TT , \xi )$ can be written $Z_v (\TT , \xi ) = \sum_{A\subset \AAA_U} Z_{v,A} ( \TT , \xi ) $ with 
 \[
 Z_{v,A} ( \TT , \xi ) = \int_{\Omega_v ( A , \beta_v ) } \prod_{\alpha \in A} \left ( \LL^{\rho_\alpha } T_\alpha \right  ) ^{\ord x_\alpha } \ee ( f_\xi ( x ) ) \dd x 
 \]
 as we already pointed out in Remark \ref{remark:dependance-of-local-factors}. 
It is possible to compute or at least bound the weight of this integral. We refer to \cite[\S 6.3.5.2]{bilu2018motivic} for the details of such computations; in this proof we will restrict ourselves to giving a list of results we need to apply Proposition \ref{proposition:linear-conv-criterion}. In particular we obtain weight-linear convergence of the Euler product $\prod_{v\in C_1} F_v ( \TT , \xi )$. In what follows $\XXX_{\mathcal O_v}$ is written $\XXX$ for conciseness and the index $v$ may be dropped.  

If $A=\varnothing $ then the integral above equals one. 
If $A= \{ \alpha \}$ the intermediate step is to cut the integral into two pieces: one part corresponding to arcs with origin in the zero divisor $\Xi_\xi$ of $f_\xi$ and another part corresponding to arcs with origin outside $\Xi_\xi$. 
 \begin{align*}
 Z_{v,\{ \alpha \} } ( \TT, \xi ) &= \int_{ \mathscr L ( \XXX , D_\alpha^\circ \setminus \Xi_\xi )} \prod_{\alpha \in A} \left ( \LL^{\rho_\alpha } T_\alpha \right  ) ^{\ord x_\alpha } \ee ( f_\xi ( x ) ) \dd x \\
 & +  \int_{\mathscr L ( \XXX , D_\alpha^\circ  \cap \Xi_\xi ) } \prod_{\alpha \in A} \left ( \LL^{\rho_\alpha } T_\alpha \right  ) ^{\ord x_\alpha } \ee ( f_\xi ( x ) ) \dd x
 \end{align*}
Then there are several cases to distinguish, according to the order of the pole of $f_\xi$ at $D_\alpha$. On one hand, concerning arcs with origins outside $\Xi_\xi$, one has the following results. 
 \begin{itemize}
 \item If $d_\alpha (\xi ) = 0 $ then 
 \[
 \int_{\mathscr L ( \XXX , D_\alpha^\circ \setminus \Xi_\xi )} \left ( \LL^{\rho_\alpha } T_\alpha \right  ) ^{\ord x_\alpha } \ee ( f_\xi ( x ) ) \dd x = ( 1 - \LL^{-1} ) \frac{\LL^{\rho_\alpha - 1 } T_\alpha }{1 - \LL^{\rho_\alpha - 1 } T_\alpha } [D_\alpha^\circ \setminus \Xi_\xi ] \LL^{-n+1} .
 \]
 \item If $d_\alpha (\xi ) =1 $ then 
  \[
 \int_{\mathscr L ( \XXX , D_\alpha^\circ \setminus \Xi_\xi ) } \left ( \LL^{\rho_\alpha } T_\alpha \right  ) ^{\ord x_\alpha } \ee ( f_\xi ( x ) ) \dd x = - \LL^{-2} [D^\circ_\alpha  \setminus \Xi_\xi  ]\LL^{1-n} \LL^{\rho_\alpha} T_\alpha .
 \]
 Since $\dim ( D_\alpha^\circ \setminus \Xi_\xi ) = n-1$, one has the following upper bound on the weight
 \[
 w_{C_1} \left ( - \LL^2 [D^\circ_\alpha  \setminus \Xi_\xi  ]\LL^{1-n} \LL^{\rho_\alpha} \right ) \leqslant 2 (  \rho_\alpha - 2  ) + 1 \leqslant 2  \rho_\alpha  -1 .
 \]
 \item If $d_\alpha (\xi ) > 1 $ then this integral equals zero. 
 \end{itemize}
On the other hand, concerning arcs with origin in $\Xi_\xi$,  the corresponding integral can be rewritten 
\begin{align*}
	& \int_{\mathscr L ( \XXX , D_\alpha^\circ  \cap \Xi_\xi ) } \left ( \LL^{\rho_\alpha } T_\alpha \right  ) ^{\ord x_\alpha } \ee ( f_\xi ( x ) ) \dd x  \\
& = 
\sum_{m\geqslant 1} \left ( \LL^{\rho_\alpha } T_\alpha \right  ) ^{m} \int_{ \substack{ \left ( D_\alpha^\circ \cap \Xi_\xi \right ) \times  \mathscr L ( \mathbf A^1 ) \times \mathscr L ( \mathbf A^1 , 0 )^{n-1} \\ \ord x = m }} \ee ( f_\xi ( t^m x , \mathbf y ) ) \dd x \dd \mathbf y .
\end{align*}
By (\ref{inequ-weight-integral-volume}), the weight of the coefficient of order $m$ is smaller than the weight of the motivic volume 
\begin{align*}
	& \vol \left  ( \{ ( w , x , \mathbf y ) \in \left ( D_\alpha^\circ \cap \Xi_\xi \right ) \times \mathscr L ( \mathbf A^1 ) \times \mathscr L ( \mathbf A^1 , 0  ) ^{n-1} \mid \ord x = m \} \right ) \\
& =
\left [ D_\alpha^\circ \cap \Xi_\xi  \right ] \LL^{-m} \left ( 1 - \LL^{-1} \right ) \LL^{-n+1}.
\end{align*}
One has the upper bound on dimensions 
\[
\dim \left ( \LL^{\rho_\alpha m} [D_\alpha^\circ \cap \Xi_\xi  ] \LL^{-m} \left ( 1 - \LL^{-1} \right ) \LL^{-n+1} \right ) \leqslant m\rho_\alpha + ( n -2 ) - m - ( n -1  ) = m ( \rho_\alpha   -  1  ) - 1 
\]
and thus on weights 
\[
w_{C_1} \left ( \LL^{\rho_\alpha m} [D_\alpha^\circ \cap \Xi_\xi  ] \LL^{-m} \left ( 1 - \LL^{-1} \right ) \LL^{-n+1} \right ) \leqslant 2 ( m ( \rho_\alpha - 1 )  - 1 ) + \dim ( C_1 ) .
\]
Setting $c = \underset{\alpha \in \AAA_U}{\max}\left  ( 1 - \frac{1}{2\rho_\alpha } \right )$ as in \cite{bilu2018motivic}, one gets $2\rho_\alpha - 1 \leqslant 2 \rho_\alpha c $ for every $\alpha \in \AAA_U$ and  
\[
w_{C_1} \left (\LL^{\rho_\alpha m} [D_\alpha^\circ \cap \Xi_\xi  ] \LL^{-m} \left ( 1 - \LL^{-1} \right ) \LL^{-n+1} \right ) \leqslant 2 c m \rho_\alpha - 1 . 
\]
Now if $|A | \geqslant 2 $ the idea is similar. The corresponding local term is 
\[
Z_{v,A} = \sum_{\mm \in \NN^A_{>0} } \prod_{\alpha \in A} \left ( \LL^{\rho_\alpha} T_\alpha \right )^{m_\alpha} \int_{\ord x_\alpha =  m_\alpha } \ee ( f_\xi ( x , y ) ) \dd x \dd y .
\]
The volume of the constructible subsets of $\Omega_v ( A)$ over which integration is done is actually
\begin{align*}
& \vol \left ( \left \{  (w, ( x_\alpha )_{\alpha \in A} , y ) \in D_A^\circ \times \mathscr L ( \mathbf A^1 )^{A} \times \mathscr L ( \mathbf A^1 , 0 ) ^{n-|A|} \mid \ord x_\alpha = m_\alpha \text{ for every } \alpha \in A \right \} \right ) \\
& = [D_A^\circ ] \prod_{\alpha \in A} \left ( \LL^{-m_\alpha} \left ( 1  - \LL^{-1} \right ) \right ) \LL^{-n+|A|} .
\end{align*}
By definition of $D_A^\circ$, it has dimension at most $n-|A|$. Thus the dimension of the integral is at most $- \sum_{\alpha \in A} m_\alpha $
and, using again the inequality of \S \ref{inequ-weight-integral-volume}, the weight of the $\mm$-th coefficient of $Z_{v,A} ( \TT , \xi ) $ is bounded by
\[
2 \sum_{\alpha \in A } m_\alpha ( \rho_\alpha -1 ) + \dim C_1 \leqslant 2 c \sum_{\alpha \in A} m_\alpha \rho_\alpha  - 1 .
\]

To summarize, we obtained 
\begin{align*}
Z_v ( \TT , \xi ) & = 1 + \sum_{ \substack{ \alpha \in  \AAA_U \\ d_\alpha ( \xi ) \leqslant 1  } } Z_{v,\alpha} ( \TT, \xi ) + \sum_{|A|\geqslant 2} Z_{v,A} ( \TT, \xi )\\
Z_v ( \TT , \xi ) &=  1 + \LL^{-n} \left ( 1 - \LL^{-1}\right ) \sum_{\alpha \in \AAA_U^0 (\xi ) }  [ D^\circ _\alpha \setminus \Xi_\xi ] \frac{\LL^{\rho_\alpha} T_\alpha }{1- \LL^{\rho_\alpha -1} T_\alpha }  - \LL^{-n-1} \sum_{\substack{ \alpha \in  \AAA_U ( \xi ) \\ d_\alpha ( \xi ) = 1  }}   [ D^\circ _\alpha \setminus \Xi_\xi ] \LL^{\rho_\alpha } T_\alpha \\
&+ \text{ terms of weights bounded as in Proposition \ref{proposition:linear-conv-criterion}, locally uniformly in $\xi$}\\
\end{align*}

The last step of the proof consists in multiplying by $\prod_{\alpha \in \AAA_U^0 (\xi )} \left ( 1 - \LL^{\rho_\alpha -1 }T_\alpha \right )$. This operation does not affect the bounds we need in order to apply Proposition \ref{proposition:linear-conv-criterion}, since $\rho_\alpha - 1 \leqslant c\rho_\alpha$,
and allows one to control the diverging term \[
\sum_{\alpha \in \AAA_U^0 (\xi ) }  [ D^\circ _\alpha \setminus \Xi_\xi ] \frac{\LL^{\rho_\alpha} T_\alpha }{1- \LL^{\rho_\alpha -1} T_\alpha } 
\]
in the expression of $Z_v ( \TT , \xi )$. 
One gets 
\begin{align*}
F_v ( \TT , \xi ) & = 1 - \sum_{\alpha \in \AAA_U^0 (\xi ) } \LL^{\rho_\alpha -1} T_\alpha + \LL^{1-n} \sum_{\alpha \in \AAA_U^0 (\xi ) } [ D^\circ _\alpha \setminus \Xi_\xi ] \LL^{\rho_\alpha -1 }  T_\alpha + P_v ( \TT, \xi ) \\
& = 1 + \sum_{\alpha \in \AAA_U^0 (\xi ) } \left ( [ D_\alpha^\circ \setminus \Xi_\xi ] -  \LL^{n-1} \right )\LL^{1-n} \LL^{\rho_\alpha -1 }  T_\alpha + P_v ( \TT, \xi )
\end{align*}
where $P_v ( \TT, \xi ) $ is a Laurent series consisting of terms satisfying the bounds of Proposition \ref{proposition:linear-conv-criterion} for 
\[ 
c = \underset{\alpha \in \AAA_U}{\max}\left  ( 1 - \frac{1}{2\rho_\alpha } \right ).
\] 
Finally, by Proposition \ref{proposition:properties-of-the-weight} one has for every $\alpha \in \AAA_U^0 ( \xi)$
\[
w_{C_1} \left ( \left ( [ D_\alpha^\circ \setminus \Xi_\xi ] -  \LL^{n-1} \right )\LL^{1-n} \LL^{\rho_\alpha -1 }   \right )  \leqslant 2 ( n -1 ) - 2 ( n - 1 ) + 2 ( \rho_\alpha - 1 ) = 2 ( \rho_\alpha - 1 ) .
\]
Our analysis shows that we can apply Proposition \ref{proposition:linear-conv-criterion} with $X=C_1$, $\varepsilon = \frac{1}{2}$, $M$ arbitrary and $\beta = 0$. The result follows. 
\end{proof}

\begin{myremark}\label{remark:uniform-convergence-finite-stratum}
The previous proof shows that the weight-linear convergence of $\prod_{v\in C_1} F_ v ( \TT , \xi )$ with respect to $\rho$ is uniform on each set of a finite partition of $V\setminus \{ 0\}$ given by 
\begin{align*}
V_{A_0^D , A_1^D , A_{\geqslant 2}^D } & = \left ( \bigcap_{ \alpha \in A_0^D } d_\alpha^{-1} ( \{0 \} ) \right ) \cap \left ( \bigcap_{ \alpha \in A_1^D } d_\alpha^{-1} ( \{ 1 \} ) \right ) \cap \left ( \bigcap_{ \alpha \in A_{\geqslant 2}^D } d_\alpha^{-1} ( \NN_{\geqslant 2} ) \right ) \\
A_0^D \sqcup A_1^D \sqcup A_{\geqslant 2}^D & = \AAA_U 
\end{align*}
(recall that $V$ is the $n$-th power of the Riemann-Roch space of the divisor $\tilde{E}$ (\ref{def:divisor-E-tilde})).
\end{myremark}


\section{Moduli spaces of curves: asymptotic behaviour}

\subsection{Simplified case: rational curves}

For the sake of simplicity we start the proof of our result assuming that $C$ is the projective line over $k$.
For any $\mathbf a $ and $ \mathbf b$  in $\mathbf Z^{\AAA} $ we write $\mathbf a\leqslant \mathbf b $ if and only if $\mathbf b - \mathbf a$ lies in $ \NN^\AAA$. 

Recall that by \cite[Theorem 2.7]{tschinkel1999geometry} there exists an $r$-tuple of integers $(\rho_\alpha)_{\alpha\in \AAA}$ such that $\rho_\alpha \geqslant 2$ for all index $\alpha$ and 
\[
\sum_{\alpha \in \AAA} \rho_\alpha D_\alpha 
\] 
is an anticanonical divisor on $X = \XXX_F$. 
A log-divisor with respect to the boundary $D$ is then given by 
\[
\sum_{\alpha \in \AAA} \rho_\alpha ' D_\alpha 
\]
where $\rho_\alpha ' = \rho_\alpha - 1 $ if $\alpha \in \AAA_D $ and $\rho_\alpha ' = \rho_\alpha$ otherwise. 
As it has already been the case in the introduction, for any $r$-tuple $\mm = (m_\alpha )_{\alpha\in\AAA} $ of 
integers, we will use freely the pairing
\[
\langle \rho  ' , \mm \rangle = \sum_{\alpha \in \AAA} \rho_\alpha '  m_\alpha 
\]
as well as its obvious restriction $\langle \rho ' , \mm \rangle_{A}$ to any subset $A $ of $\AAA$. 
Recall that this quantity corresponds to a generic log-anticanonical degree.

\subsubsection{Main term}

First we study the contribution of the trivial character $\xi = 0$, which is expected to be the only one to contribute asymptotically after normalization by $\LL^{\langle \rho ' , \mm \rangle}$. 
For sake of simplicity we begin assuming $\UUU = \XXX$ 
so that $\AAA_U = \AAA$ and the coarse and refined height zeta functions coincide. 

For every place $v $ of $ \PP^1_k$ recall that we already defined a polynomial 
\[
F_v ( \TT , 0 ) = Z_v ( \TT , 0 ) \prod_{\alpha \in \AAA} \left ( 1 - \LL ^{\rho_\alpha -1 } T_\alpha \right ) 
\] 
with Notation \ref{notation:F_v(T,0)}.
Thanks to Proposition \ref{proposition:euler-product:multiplicativity} 
we are able to permute the products $\prod_{\alpha \in \AAA} $ and $\prod_{v\in \PP^1_k}$ 
when considering $Z ( \TT , 0 )$. 
It means that 
\[
Z ( \TT , 0 ) = 
\prod_{v\in \PP^1_k} Z_v ( \TT , 0 ) = \left ( \prod_{v\in \PP^1_k} F_v ( \TT , 0) \right ) \left (  \prod_{\alpha \in \AAA} \prod_{v\in \PP^1_k}  \left ( 1 - \LL ^{\rho_\alpha -1 } T_\alpha \right )^{-1}   \right ) . 
\]
By definition, the Kapranov Zeta function of $\PP^1_k$ is the power series
\[
Z_{\PP^1_k} ( T )  = \prod_{v\in \PP^1_k} ( 1 - T )^{-1} 
\]
which is given explicitly (see e.g. \cite[Theorem 1.1.9]{kapranov2000elliptic} or \cite[Chapter 7, Theorem 1.3.1]{chambert2018motivic}) by 
\[
Z_{\PP^1_k} ( T )   = \frac{1}{(1-T)(1-\LL T)} = \frac{1}{1-\LL} \left ( \frac{1}{1-T} - \frac{\LL}{1-\LL T} \right ). 
\]
In our case, this gives for every index $\alpha$ 
\[
\prod_{v\in \PP^1_k} \left ( 1-\LL^{\rho_\alpha -1} T_\alpha \right )^{-1}  
= Z_{\PP^1_k} ( \LL^{\rho_\alpha -1} T_\alpha  )
= \frac{1}{1-\LL}  \left ( \frac{1}{1-\LL^{\rho_\alpha -1} T_\alpha} - \frac{\LL}{1-\LL^{\rho_\alpha} T_\alpha } \right ) .
\]
Expanding the right side of this equality, 
one gets that the coefficient of $T_\alpha^m$ 
for any non-negative integer $m$ 
is given by
\[
\frac{ \LL^{\rho_\alpha m} ( \LL^{-m} - \LL )}{1-\LL} = \frac{1}{1-\LL^{-1}} \left ( 1 - \LL^{-m -1}  \right ) \LL^{\rho_\alpha m}  . 
\]
Thus, loosely speaking, after normalisation by $\LL^{\rho_\alpha m}$, 
the contribution of $(1-\LL^{\rho_\alpha -1} T_\alpha )^{-1}$ in this last formula tends to zero when $m$ tends to infinity.  
In order to find the expected limit of $\LL^{-\langle \rho , \mm \rangle}  [M_{U,\mm}] $ 
when $\min_{\alpha\in\AAA} (m_\alpha )$ tends to infinity, 
it may be natural to consider for a while the coefficients of the series 
\[
\overline{Z} ( \TT )  = 
\left ( \prod_{\alpha\in \AAA} \left ( \left   (1-\LL^{-1} \right ) \left ( 1 - \LL^{\rho_\alpha} T_\alpha \right ) \right ) ^{-1}  \right ) \left ( 
\prod_{v\in \PP^1_k} F_v ( \TT , 0 ) \right ) 
\]
instead of considering those of $Z(\TT , 0 )$. 

For any $r$-tuple $\mathbf{m} = (m_\alpha)_{\alpha \in \AAA} $ of integers, 
let us denote by $\overline{ \mathfrak a} _{\mathbf{m}}$ 
the coefficient of the monomial $\TT^\mathbf{m}$ in $\overline{Z}(\TT)$. 
Denoting by $\mathfrak b_{\nn}$ the coefficient of $\TT^{\nn}$ in 
$\prod_{v\in \PP^1_k} F_v ( \TT , 0 )$ 
for any $r$-tuple $\nn$ of integers, 
one gets the finite sum 
\[
\overline{ \mathfrak a}_{\mathbf{m}}  = 
\frac{1}{\left (1- \mathbf L^{-1} \right  ) ^r}
 \sum_{\substack{\nn \in \ZZ^\AAA  \\ \nn ' \in  \NN^\AAA \\   \nn + \nn ' =  \mm }}
\mathfrak b_{\nn} \times  \LL^{ \langle  \rho , \nn' \rangle }.
\]
After normalisation by $\LL^{\langle \rho , \mm \rangle }$, 
this is actually the $\mm$-th partial sum of the Euler motivic product 
$\prod_{v\in \PP^1_k} F_v \left ( \left ( \LL^{-\rho_\alpha }  \right )_{\alpha \in \AAA} , 0 \right )$. 
Therefore  by Proposition \ref{proposition:convergence-residue-of-Zeta-0}
\[
\overline{\mathfrak a}_{\mathbf{m}} \LL^{- \langle \rho , \mm \rangle }  
\]
converges in $\widehat{\mathscr M_k} $ to 
\[
\frac{1}{\left  (1-\LL^{-1} \right )^r } 
\prod_{v\in \PP^1_k} F_v \left ( \left ( \LL^{-\rho_\alpha }  \right )_{\alpha \in \AAA} , 0 \right )
\]
when $\min_{\alpha \in \AAA} (m_\alpha )$ tends to infinity.


This heuristic argument given, now we have to justify that this is indeed the limit of 
$\LL^{-\langle \rho , \mm \rangle}  [M_{U,\mm}] $ (up to a factor $\LL^n $) 
in $\widehat{\mathscr M_k}$  
when $\min_{\alpha\in \AAA} (m_\alpha )$ 
becomes infinitely large. 
First we evaluate the error term we introduced,
then we check that terms corresponding to non-trivial characters do not constribute to the limit. 
We postpone the summation over all characters to the last section of this paper, 
where it will be performed in full generality. 

\subsubsection{Contribution of the error term}\label{paragraph:Manin-Peyre-projective-line:trivial-characters}
In order to control the error term we implicitly introduced in the previous paragraph, we develop the denominator of $Z(\TT, 0)$ as follows. We still assume $\UUU = \XXX$. Then
\begin{align*}
G ( \TT , 0 ) & = \prod_{\alpha \in \AAA} Z_{\PP^1_k} ( \LL^{\rho_\alpha -1} T_\alpha  ) \\
& = \prod_{\alpha \in \AAA} \prod_{v\in \PP^1_k} \left ( 1-\LL^{\rho_\alpha -1} T_\alpha \right )^{-1} \\
& = \prod_{\alpha \in \AAA} \frac{1}{1-\LL}  \left ( \frac{1}{1-\LL^{\rho_\alpha -1} T_\alpha } - \frac{\LL}{1-\LL^{\rho_\alpha} T_\alpha } \right ) \\
& = \frac{1}{(1-\LL)^r} \sum_{\varepsilon \in \{0,1\}^{\AAA}} \frac{(-\LL)^{| \AAA | - |\varepsilon |  }}{\prod_{\alpha\in \AAA} \left ( 1-\LL^{\rho_\alpha - \varepsilon_\alpha } T_\alpha  \right) } .
\end{align*}
where $|\varepsilon | = \sum_\alpha \varepsilon_\alpha$. 
Let us introduce a few convenient notations. 
For any $\varepsilon \in \{0,1\} ^{\AAA} $ we define
\[
G_\varepsilon ( \mathbf T) =  \prod_{\alpha\in \AAA}  ( 1-\LL^{\rho_\alpha - \varepsilon_\alpha } T_\alpha  ) ^{-1} 
= \sum_{\mm \in \NN^\AAA } \LL^{\langle \rho - \varepsilon , \mm \rangle } \TT^{\mm} 
\] 
so that $G(\TT , 0 )$ admits a decomposition of the form
\[
G(\mathbf T , 0) = \sum_{\varepsilon \in \{0,1\}^{\AAA} }  \frac{(-\LL)^{r - |\varepsilon |}}{(1-\LL)^r } G_\varepsilon ( \mathbf T ) .
\]   
Now one easily sees that the term of multidegree $\mm \in \mathbf Z^\AAA$ of 
$G _\varepsilon (\mathbf T ) \prod_{v\in \mathbf P^1_k} F_v ( \mathbf T , 0 ) $ 
is the finite sum 
\[
\mathfrak g_{\mm} ^\varepsilon = 
 \sum_{\substack{\nn \in  \ZZ^\AAA  \\ \nn ' \in  \NN^\AAA \\   \nn + \nn ' =  \mm }} \mathfrak b_{\nn } \LL^{\langle \rho - \varepsilon  , \nn ' \rangle }
\]
from which we deduce 
\[
\mathfrak g_{\mm} ^\varepsilon \LL^{- \langle  \rho, \mm \rangle } 
 = \sum_{\substack{\nn \in   \ZZ^\AAA  \\ \nn ' \in  \NN^\AAA \\   \nn + \nn ' =  \mm }} \mathfrak b_{\nn } \LL^{-\langle \rho , \nn \rangle}  \LL^{ -\langle \varepsilon   , \nn ' \rangle   } . 
\]
The case $\varepsilon =\mathbf 0 $ has been studied in the previous paragraph and it suffices to apply Lemma \ref{lemma:negligible-terms} to $\sum \mathfrak b_{\nn} \LL^{-\langle \rho , \nn \rangle }$ in case $\varepsilon \neq \mathbf 0$. 
We obtain the following proposition. 
\begin{myptn}[$\UUU = \XXX$, $C=C_0=\PP^1_k$, $\xi = 0$]
There exists a decomposition 
\[
Z ( \TT , 0 ) =
 \sum_{\mm \in \mathbf Z^{\AAA} }  \mathfrak a _{\mm} \TT^{\mm} 
=  \
\sum_{\varepsilon \in \{ 0 , 1 \}^{\AAA} } \sum_{\mm \in \mathbf  Z^{\AAA} } \mathfrak a _{\mm} ^\varepsilon  
 \TT^{\mm} 
\]
and 
a real number $\delta >0 $
such that for all $\varepsilon \in \{ 0 , 1\}^{\AAA } \setminus \{ \mathbf 0 \} $
\[
w \left ( \mathfrak a_{\mm} ^\varepsilon \LL^{- \langle \rho  , \mm \rangle } \right )  < - \delta \langle \varepsilon , \mm \rangle
\]
while 
\[
 \mathfrak a _{\mm}^{\mathbf 0} \LL^{- \langle \rho , \mm \rangle }
\underset{\rho\text{-weight-lin.}}{ \longrightarrow }
 \frac{1}{(1-\LL^{-1}  )^{\rg (\Pic (X)) }}
\prod_{v\in \PP^1_k} F_v \left ( ( \LL^{  - \rho_\alpha    } )_{\alpha_\in \AAA} , 0 \right )
\]
when $ \min_{\alpha \in \AAA} ( m_\alpha  ) $ tends to infinity.

\end{myptn}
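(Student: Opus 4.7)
The plan is to build directly on the decomposition of $G(\TT,0) = \prod_{\alpha \in \AAA} Z_{\PP^1_k}(\LL^{\rho_\alpha-1} T_\alpha)$ derived immediately before the statement, namely
\[
G(\TT,0) = \sum_{\varepsilon \in \{0,1\}^{\AAA}} \frac{(-\LL)^{r-|\varepsilon|}}{(1-\LL)^r}\, G_\varepsilon(\TT),
\qquad G_\varepsilon(\TT) = \sum_{\mm \in \NN^{\AAA}} \LL^{\langle \rho - \varepsilon, \mm \rangle} \TT^{\mm}.
\]
Since $Z(\TT, 0) = G(\TT,0) \prod_{v \in \PP^1_k} F_v(\TT, 0)$, this induces the required decomposition
$\mathfrak a_{\mm} = \sum_\varepsilon \mathfrak a_{\mm}^\varepsilon$, where $\mathfrak a_{\mm}^\varepsilon$ is the coefficient of $\TT^{\mm}$ in $\frac{(-\LL)^{r-|\varepsilon|}}{(1-\LL)^r} G_\varepsilon(\TT) \prod_{v} F_v(\TT,0)$. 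Letting $\mathfrak b_\nn$ denote the $\nn$-th coefficient of $\prod_v F_v(\TT,0)$, a direct convolution gives
\[
\mathfrak a_{\mm}^\varepsilon \LL^{-\langle \rho, \mm \rangle} = \frac{(-\LL)^{r-|\varepsilon|}}{(1-\LL)^r} \sum_{\nn \leq \mm} \bigl(\mathfrak b_\nn \LL^{-\langle \rho, \nn \rangle}\bigr)\, \LL^{-\langle \varepsilon, \mm - \nn \rangle}.
\]

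Next I treat the two cases separately. For $\varepsilon = \mathbf 0$ the prefactor simplifies to $(1-\LL^{-1})^{-r}$, and the right-hand side is exactly the $\mm$-th partial sum of the series $\sum_{\nn} \mathfrak b_\nn \LL^{-\langle \rho, \nn \rangle}$. By Proposition \ref{proposition:convergence-residue-of-Zeta-0}, this series converges $\rho$-weight-linearly in $\widehat{\mathscr M_k}$ to $\prod_{v \in \PP^1_k} F_v((\LL^{-\rho_\alpha})_{\alpha \in \AAA}, 0)$, so the partial sums converge to the same limit as $\min_\alpha(m_\alpha) \to \infty$. Using $r = |\AAA| = \rg(\Pic(X))$, this yields the claimed limit.

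For $\varepsilon \neq \mathbf 0$, I apply Lemma \ref{lemma:negligible-terms} with the family $\mathfrak c_\nn = \mathfrak b_\nn$ evaluated at $\mathfrak a_\alpha = \LL^{-\rho_\alpha}$: this family is $\rho$-weight-linearly convergent by Proposition \ref{proposition:convergence-residue-of-Zeta-0}, so the second assertion of the lemma provides constants $\delta_\varepsilon > 0$ and $i_0$ such that the weight of the sum over $\nn \leq \mm$ above is at most $-\delta_\varepsilon \langle \varepsilon, \mm \rangle$ for $\langle \varepsilon, \mm \rangle \geq i_0$. The prefactor $\frac{(-\LL)^{r-|\varepsilon|}}{(1-\LL)^r}$ has weight bounded by a constant independent of $\mm$, so it only shifts this estimate by a constant, which can be absorbed by slightly shrinking $\delta_\varepsilon$. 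Taking $\delta$ to be the minimum over the finitely many nonzero $\varepsilon$ yields the required uniform $\delta > 0$.

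The analysis is essentially bookkeeping once the correct decomposition of $G(\TT,0)$ is in hand; the only delicate point is that one must verify the manipulations are legal at the level of formal series in $\ZZ^\AAA$ (rather than $\NN^\AAA$) and that the contribution of each $\varepsilon$-piece can genuinely be separated. This is ensured by the fact that $\prod_v F_v(\TT,0)$ is a formal power series (so $\mathfrak b_\nn = 0$ for $\nn \not\geq \mathbf 0$), which turns the $\nn$-sum above into a finite sum over $\nn \in \NN^{\AAA}$ with $\nn \leq \mm$, exactly the setting where Lemma \ref{lemma:negligible-terms} applies. No other subtleties arise because $C = \PP^1_k$ makes $Z_{\PP^1_k}(T)$ explicitly rational, so the partial fraction decomposition of $G(\TT,0)$ is elementary.
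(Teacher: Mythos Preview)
Your proposal is correct and follows essentially the same approach as the paper: the paper also defines $\mathfrak a_\mm^\varepsilon$ via the partial-fraction decomposition of $G(\TT,0)$, identifies the $\varepsilon=\mathbf 0$ piece with the partial sums treated in the ``Main term'' paragraph (using Proposition~\ref{proposition:convergence-residue-of-Zeta-0}), and disposes of the $\varepsilon\neq\mathbf 0$ pieces by invoking Lemma~\ref{lemma:negligible-terms}. Your write-up is in fact a tidier version of the paper's argument, which is scattered across \S4.1.1--4.1.2 and only summarised in the statement itself.
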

As an immediate corollary of this proposition, 
the terms $\mathfrak a_{\mm} ^\varepsilon \LL^{- \langle \rho  , \mm \rangle }$, for $\varepsilon \neq \mathbf 0$, are negligible when $ \min_{\alpha \in \AAA} ( m_\alpha  ) $ becomes arbitrarily large, 
in comparison with $\mathfrak a _{\mm}^{\mathbf 0} \LL^{- \langle \rho , \mm \rangle }$. 

\subsubsection{Non-trivial characters} \label{paragraph:Manin-Peyre-projective-line:non-trivial-characters}
We now study the asymptotic contribution of the coefficients of $Z(\TT, \xi ) $ for $\xi \in G ( F ) ^\vee$ non-trivial. We still assume $C=\PP^1_k $.

Recall that given a place $v$ of $C$, 
the linear form $x\mapsto \langle x , \xi \rangle $ on $G_F$
can be seen as a rational function $f_{\xi}$ on $X$ with poles contained in $\cup_{\alpha \in \AAA} D_\alpha$. If $d_\alpha ( \xi) $ denotes the order of the pole of $f_{\xi}$ with respect to $D_\alpha$, one can define a subset of $\AAA_U $ by setting
\[
\AAA_U^0 (\xi ) = \{ \alpha \in \AAA_U \mid d_\alpha (\xi ) = 0 \} . 
\]
The character $\xi $ being non-trivial, the cardinality of this set is strictly smaller than $|\AAA |$. 
Using Notation \ref{notation:F_v(T,xi)}, 
the term of the motivic height Zeta function corresponding to the character $\xi$ is 
\[
Z ( \TT , \xi ) = \prod_{v\in \PP^1_k } \left ( F_v ( \TT , \xi_v ) \prod_{\alpha \in \AAA_U^0( \xi ) } \left ( 1 - \LL^{\rho_\alpha -1 } T_\alpha \right )^{-1} \right ) . 
\]
Again we restrict our analysis to the case $\UUU = \XXX$ so that $\AAA = \AAA_U $. The general case will be treated together with the case of a general curve. 
For any $\varepsilon \in \{ 0, 1 \}^{\AAA_U^0 ( \xi )}$
we introduce 
\[
G_\varepsilon  ( \TT , \xi ) = \prod_{\alpha \in \AAA_U^0 ( \xi )}    \left(1-\LL^{ \rho_\alpha - \varepsilon_\alpha } T_\alpha \right )^{-1}  . 
\]
Adapting the computation done in the previous paragraph, one gets
\[
G  ( \TT , \xi )  = 
\prod_{v\in \PP^1 _k }  \left ( \prod_{\alpha \in \AAA_U^0(\xi ) } ( 1 - \LL^{\rho_\alpha - 1 } T_\alpha )^{-1} \right ) 
= 
\sum_{\varepsilon \in \{ 0, 1 \}^{\AAA_U^0( \xi )} } \frac{(-\LL)^{|\AAA_U^0( \xi ) | - |\varepsilon | }}{(1-\LL)^{\left |\AAA_U^0(\xi) \right |} } G_\varepsilon  ( \TT , \xi ) 
\]
and the coefficient of multidegree $\mm \in \NN^\AAA$ of the product 
\[
G_\varepsilon  ( \TT , \xi ) \prod_{v\in \PP^1_k} F_v ( \TT, \xi_v ) 
\]
is 
\[
\mathfrak g_{\varepsilon , \mm} ^{\xi } =  \sum_{\substack{\nn \in   \ZZ^\AAA  \\ \nn ' \in  \NN^\AAA \\   \nn + \nn ' =  \mm }} \mathfrak b^\xi_{\nn} \LL^{ \langle \rho - \varepsilon  , \nn ' \rangle_{\AAA_U^0 ( \xi )} }   
\]
where $\mathfrak b_{\nn} ^\xi $ is the coefficient of multidegree $\nn \in \ZZZ^\AAA $ of $\prod_{v\in \PP^1_k } F_v ( \TT , \xi)$. 
Then we consider the following normalisation:
\[
\mathfrak g_{\varepsilon , \mm} ^{\xi } \LL^{-\langle \rho  , \mm \rangle}   = 
\sum_{\substack{\nn \in   \ZZ^\AAA  \\ \nn ' \in  \NN^\AAA \\   \nn + \nn ' =  \mm }} \mathfrak b^\xi_{\nn } \LL^{ - \langle \rho   , \nn \rangle  }   \LL^{ - \langle \rho , \nn '  \rangle_{\AAA\setminus \AAA_U^0 ( \xi )} - \langle \varepsilon  , \nn ' \rangle_{ \AAA_U^0 ( \xi )}} . 
\]
Applying Lemma \ref{lemma:negligible-terms} to this sum for every $\varepsilon$, we get the following proposition. 
\begin{myptn}[$\UUU = \XXX$, $C=C_0=\PP^1_k$, $\xi \neq 0$]
There exists a decomposition 
\[
Z ( \TT , \xi ) = \sum_{\varepsilon \in \{ 0 , 1 \}^{\AAA_U^0 ( \xi ) }} \sum_{\mm \in \mathbf Z^\AAA} \mathfrak a_\mm^{\xi , \varepsilon} \TT^\mm
\]
and a positive real number $\delta_\xi > 0 $ (which only depends on the $d_\alpha ( \xi )$) such that for every $\varepsilon \in \{ 0 , 1 \}^{\AAA_0 ( \xi ) }$ 
\[
w \left  ( \mathfrak a_\mm^{\xi , \varepsilon} \LL^{-\langle \rho , \mm \rangle } \right ) < - \delta_\xi \left ( \langle \rho , \mm  \rangle_{\AAA_U \setminus \AAA_U^0 ( \xi )} + \langle \varepsilon  , \mm  \rangle_{ \AAA_U^0 ( \xi )}\right )
\] 
when $\min ( m_\alpha )\to \infty$. 
\end{myptn}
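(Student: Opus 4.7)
The plan is to mirror the computation of paragraph \ref{paragraph:Manin-Peyre-projective-line:trivial-characters} (the trivial-character case), replacing Proposition \ref{proposition:convergence-residue-of-Zeta-0} throughout by Proposition \ref{proposition:convergence-residue-of-Zeta-xi}. Since $\UUU = \XXX$ implies $\AAA = \AAA_U$, Notation \ref{notation:F_v(T,xi)} combined with Proposition \ref{proposition:euler-product:multiplicativity} (which permits permuting the Euler product over $v \in \PP^1_k$ with the finite product over $\alpha \in \AAA_U^0(\xi)$) lets me write
\[
Z(\TT, \xi) = \Bigl(\prod_{v \in \PP^1_k} F_v(\TT, \xi_v)\Bigr) \prod_{\alpha \in \AAA_U^0(\xi)} Z_{\PP^1_k}\bigl(\LL^{\rho_\alpha - 1} T_\alpha\bigr).
\]
Expanding each Kapranov zeta via $Z_{\PP^1_k}(T) = \tfrac{1}{1-\LL}\bigl(\tfrac{1}{1-T} - \tfrac{\LL}{1-\LL T}\bigr)$ exactly as in the trivial-character case yields
\[
\prod_{\alpha \in \AAA_U^0(\xi)} Z_{\PP^1_k}\bigl(\LL^{\rho_\alpha - 1}T_\alpha\bigr) = \sum_{\varepsilon \in \{0,1\}^{\AAA_U^0(\xi)}} c_\varepsilon\, G_\varepsilon(\TT, \xi),
\]
where $c_\varepsilon = (-\LL)^{|\AAA_U^0(\xi)| - |\varepsilon|}(1-\LL)^{-|\AAA_U^0(\xi)|}$ and $G_\varepsilon(\TT, \xi) = \prod_{\alpha \in \AAA_U^0(\xi)}(1 - \LL^{\rho_\alpha - \varepsilon_\alpha}T_\alpha)^{-1}$. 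This induces the desired decomposition with $\mathfrak{a}_\mm^{\xi, \varepsilon}$ defined as the $\TT^\mm$-coefficient of $c_\varepsilon G_\varepsilon(\TT, \xi) \prod_v F_v(\TT, \xi_v)$.

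Write $\mathfrak{b}_\nn^\xi$ for the $\nn$-th coefficient of $\prod_v F_v(\TT, \xi_v)$. Expanding $G_\varepsilon$ as a geometric series in each variable $T_\alpha$, $\alpha \in \AAA_U^0(\xi)$, and using that $\nn' = \mm - \tilde{\nn}$ is supported on $\AAA_U^0(\xi)$ (so $\langle \rho, \nn'\rangle_{\AAA_U \setminus \AAA_U^0(\xi)} = 0$), a direct calculation yields
\[
\mathfrak{a}_\mm^{\xi, \varepsilon} \LL^{-\langle \rho, \mm\rangle} = c_\varepsilon \sum_{\substack{\tilde{\nn} \leq \mm \\ \tilde n_\alpha = m_\alpha,\ \alpha \notin \AAA_U^0(\xi)}} \mathfrak{b}_{\tilde{\nn}}^\xi \LL^{-\langle \rho, \tilde{\nn}\rangle} \LL^{-\langle \varepsilon, \mm - \tilde{\nn}\rangle}.
\]
By Proposition \ref{proposition:convergence-residue-of-Zeta-xi}, the Euler product $\prod_v F_v(\TT, \xi_v)$ converges $\rho$-weight-linearly at $T_\alpha = \LL^{-\rho_\alpha}$: there exist $\delta_\xi > 0$ and $i_0$, both depending only on the $d_\alpha(\xi)$, such that $w(\mathfrak{b}_{\tilde{\nn}}^\xi \LL^{-\langle \rho, \tilde{\nn}\rangle}) < -\delta_\xi \langle \rho, \tilde{\nn}\rangle$ as soon as $\langle \rho, \tilde{\nn}\rangle \geq i_0$. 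Since $\xi \neq 0$ and $X$ is projective, $\AAA_U \setminus \AAA_U^0(\xi)$ is non-empty, and the constraint $\tilde n_\alpha = m_\alpha$ on that subset forces $\langle \rho, \tilde{\nn}\rangle \geq \langle \rho, \mm\rangle_{\AAA_U \setminus \AAA_U^0(\xi)}$, so the weight-linear bound applies uniformly on the sum as soon as $\min(m_\alpha)$ is large enough.

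The only remaining content, which I view as the main (and essentially the only non-formal) obstacle, is a short case analysis to absorb the extra factor $\LL^{-\langle \varepsilon, \mm - \tilde{\nn}\rangle}$ of weight $-2\langle \varepsilon, \mm - \tilde{\nn}\rangle$. After possibly shrinking $\delta_\xi$ so that $\delta_\xi \leq 2$, the inequalities $\varepsilon_\alpha \leq 1 \leq \rho_\alpha/2$ combined with $\tilde{\nn} \leq \mm$ yield the elementary estimate
\[
\delta_\xi \langle \varepsilon, \mm\rangle_{\AAA_U^0(\xi)} = \delta_\xi \langle \varepsilon, \tilde{\nn}\rangle_{\AAA_U^0(\xi)} + \delta_\xi \langle \varepsilon, \mm - \tilde{\nn}\rangle_{\AAA_U^0(\xi)} \leq \delta_\xi \langle \rho, \tilde{\nn}\rangle_{\AAA_U^0(\xi)} + 2 \langle \varepsilon, \mm - \tilde{\nn}\rangle_{\AAA_U^0(\xi)},
\]
so that, splitting $\langle \rho, \tilde{\nn}\rangle = \langle \rho, \mm\rangle_{\AAA_U \setminus \AAA_U^0(\xi)} + \langle \rho, \tilde{\nn}\rangle_{\AAA_U^0(\xi)}$, each summand has weight at most $-\delta_\xi\bigl(\langle \rho, \mm\rangle_{\AAA_U \setminus \AAA_U^0(\xi)} + \langle \varepsilon, \mm\rangle_{\AAA_U^0(\xi)}\bigr)$. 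Since the sum is finite, Proposition \ref{proposition:properties-of-the-weight}(2) transfers the bound to $\mathfrak{a}_\mm^{\xi, \varepsilon} \LL^{-\langle\rho, \mm\rangle}$; the bounded weight contribution of the scalar $c_\varepsilon$ is absorbed by replacing $\delta_\xi$ with, say, $\delta_\xi/2$ once $\min(m_\alpha)$ is large enough. This yields the claimed bound, uniformly in $\varepsilon$.
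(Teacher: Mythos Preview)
Your proof is correct and follows essentially the same route as the paper: the same factorisation of $Z(\TT,\xi)$ via the Kapranov zeta function, the same $\varepsilon$-decomposition of $G(\TT,\xi)$, and the same normalised expression for the coefficients. The only difference is that where the paper invokes Lemma \ref{lemma:negligible-terms} to control the sum, you reprove the needed inequality directly inline; this is a cosmetic rather than a substantive distinction.
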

Since $\AAA_U \setminus \AAA_U^0 ( \xi ) $ is non-empty for every $\xi \neq 0$, the inequality of this last proposition means that the normalised terms $\mathfrak a_\mm^{\xi , \varepsilon} \LL^{-\langle \rho , \mm \rangle }$ coming form $Z ( \TT , \xi )$ are negligible in comparison with the main term $\mathfrak a_\mm^{\mathbf 0 } \LL^{-\langle \rho , \mm \rangle } $ coming from $Z ( \TT , 0 ) $. 
Furthermore we have a linear bound on the weight which is uniform on a finite partition of the space of characters
by Remark \ref{remark:uniform-convergence-finite-stratum}. 

\subsection{The case of a general curve} \label{paragraph:case-general-curve}
From now on we assume that $C$ is a projective smooth irreducible curve of genus $g$ over $k$ having a $k$-rational point. 
In this case, for $m \geqslant 2g-1$ the class of $\Sym ^m ( C) $ in $\KVar _k$ is $[ \Pic ^0 ( C) ][\PP^{m-g}_k ]$ 
(it follows from Riemann-Roch and Serre's duality, see \cite[Chapter 7, Example 1.1.10]{chambert2018motivic}) 
and the Kapranov zeta function of $C$ is still rational (see \cite[Theorem 1.1.9]{kapranov2000elliptic} or \cite[Chapter 7, Theorem 1.3.1]{chambert2018motivic}). We have 
\[
Z_C^\text{Kap} ( T ) = \sum_{m\in \NN} [\Sym ^m ( C) ] T^m = \sum_{m=0}^{2(g-1)} [\Sym ^m ( C ) ] T^m  +    [\Pic ^0 ( C ) ] \sum_{m\geqslant 2g-1} \frac{\LL^{m-g+1} - 1 }{\LL - 1 } T^m 
\]
(where the first sum is empty if $g=0$).
Consider for any $\xi \in G ( F ) ^\vee$ 
\[
G(\TT, \xi ) =\prod_{v\in C} \prod_{\alpha \in \AAA_U^0 (\xi ) }  ( 1 - \LL^{\rho_\alpha - 1 } T_\alpha )^{-1} 
\]
(remark that if $\xi = 0$ then $\AAA_U^0 ( \xi ) = \AAA_U$). 
By Proposition \ref{proposition:euler-product:multiplicativity} one has   
\[
G(\TT,\xi ) =
\prod_{\alpha \in \AAA_U^0 (\xi )} Z_C^\text{Kap} \left ( \LL^{\rho_\alpha - 1 } T_\alpha \right )
\]
In order to easily adapt the computations done before, we may assume $g\geqslant 1 $, replace $Z_C^\text{Kap} $ by 
\[
\widetilde{Z^\text{Kap}_C} ( T ) = \sum_{m\geqslant 0 }   [\Pic ^0 ( C ) ] \frac{\LL^{m-g+1} - 1 }{\LL - 1 } T^m
\]
in the expression of $G( \TT , \xi ) $ and then control the error term coming from this slight modification. If we do so, then we obtain 
\[
Z ( \TT , \xi ) = G ( \TT , \xi ) \prod_{v\in C} F_v ( \TT , \xi ) = \widetilde{G} ( \TT , \xi ) \prod_{v\in C} F_v ( \TT , \xi ) + H ( \TT , \xi ) \prod_{v\in C} F_v ( \TT , \xi )
\]
where 
\begin{equation}
\label{eq-def:G-tilde(T,xi)}
\widetilde{G}(\TT , \xi ) =
\prod_{\alpha \in \AAA_U^0 (\xi )} \widetilde{Z_C^\text{Kap}} \left ( \LL^{\rho_\alpha - 1 } T_\alpha \right )
\end{equation}
and 
\begin{equation}
\label{eq-def:H(T,xi)}
H ( \TT , \xi ) 
 = 
 \prod_{\alpha \in \AAA_U^0 (\xi )} Z_C^\text{Kap} \left ( \LL^{\rho_\alpha - 1 } T_\alpha \right )
 - \prod_{\alpha \in \AAA_U^0 (\xi )} \widetilde{Z_C^\text{Kap}} \left ( \LL^{\rho_\alpha - 1 } T_\alpha \right ).
\end{equation}
 We then put 
 \begin{equation}\label{eq-def:zeta-tilde(T,xi)}
 \widetilde{Z}(\TT , \xi )  = \widetilde{G} ( \TT , \xi ) \prod_{v\in C} F_v ( \TT , \xi ). 
 \end{equation}

\subsubsection{Main term}

Basically, the contribution of $\widetilde{Z} ( \TT , \xi ) $ has already been treated in \S \ref{paragraph:Manin-Peyre-projective-line:trivial-characters} for the case $\xi = 0$ and in \S \ref{paragraph:Manin-Peyre-projective-line:non-trivial-characters} for non-trivial characters. The only difference with the particular case of the projective line is a factor $[ \Pic ^0 ( C ) ]\LL ^{(1-g)}$. Indeed, starting from (\ref{eq-def:G-tilde(T,xi)}) one gets
\begin{align*}
\widetilde{G} ( \TT , \xi ) &  = \prod_{\alpha \in \AAA_U^0( \xi )} \frac{ [\Pic ^0 ( C ) ]}{1-\LL}  \left ( \frac{1}{1-\LL^{\rho_\alpha -1} T_\alpha} - \frac{\LL^{1-g}}{1-\LL^{\rho_\alpha} T_\alpha}  \right ) \\
& = \frac{ [\Pic ^0 ( C ) ]^{ \left |\AAA_U^0( \xi ) \right | } }{(1-\LL)^{|\AAA|}} \sum_{\varepsilon \in \{0,1\}^{\AAA_U^0( \xi )}} \frac{\left (-\LL^{  1 - g} \right )^{  \left | \AAA_U^0( \xi ) \right | - |\varepsilon | }}{\prod_{\alpha\in \AAA_U^0( \xi )} \left ( 1-\LL^{ \rho_\alpha -\varepsilon_\alpha } T_\alpha \right ) } \\
& =  \left  ( \frac{ [ \Pic ^0 ( C ) ] \LL^{1-g}    }{ \LL - 1  } \right ) ^{ \left |\AAA_U^0( \xi )\right  | }  \sum_{\varepsilon \in \{0,1\}^{\AAA_U^0( \xi )}} (-1)^{|\varepsilon | }   \widetilde{G_\varepsilon}(\TT , \xi  )  \LL^{ |\varepsilon | (g-1)}
\end{align*}
where 
\[
\widetilde{G_\varepsilon}(\TT , \xi  ) = \prod_{\alpha\in \AAA_U^0( \xi )} \left ( 1-\LL^{ \rho_\alpha -\varepsilon_\alpha } T_\alpha \right )^{-1} = 
\sum_{\mm\in \NN^{\AAA_U^0( \xi )}} \LL^{\langle \rho - \varepsilon , \mm \rangle_{\AAA_U^0( \xi )} } \TT ^{\mm} .
\]
The following proposition summarizes what we obtain if we replace $G_\varepsilon(\TT , \xi  ) $ by $\widetilde{G_\varepsilon}(\TT , \xi  )$ in the previous paragraph. Again, it ensures the negligibility of the terms corresponding to non-trivial characters (up to the error term coming from $H ( \TT , \xi ) $, which is treated in the next paragraph). 
\begin{myptn}[$\UUU = \XXX$, $C=C_0$, $\AAA = \AAA_U$]
For any character $\xi$, there exists a decomposition 
\[
\widetilde{Z} ( \TT , \xi ) =
 \sum_{\mm \in \mathbf Z ^{\AAA} }  \tilde{\mathfrak a}^\xi_{\mm} \TT^{\mm} 
=\sum_{\varepsilon \in \{ 0 , 1 \}^{\AAA_U^0 ( \xi ) } }   \sum_{\mm \in \mathbf Z ^{\AAA} } 
\tilde{\mathfrak a} _{\mm} ^{\xi , \varepsilon }
 \TT^{\mm} 
\]
and a real number $\delta_\xi >0 $ (depending only on the $d_\alpha ( \xi ) $) 
such that 
\begin{itemize}
\item if $\xi = 0 $ 
\begin{align*}
\tilde{\mathfrak a}_{\mm}^{0,\mathbf 0} \LL^{-\langle \rho , \mm \rangle } 
& =
 \left  ( \frac{ [ \Pic ^0 ( C ) ]\LL ^{1-g}  }{ \LL - 1   } \right ) ^{ \left  |\AAA_U \right |}  
 \sum_{\substack{\mm ' \in \mathbf Z^{\AAA}  \\\mm ' \leqslant \mm }} \mathfrak b_{\mm ' } \LL^{ - \langle \rho   ,  \mm ' \rangle } \\
& \underset{\rho\text{-weight-lin.}}{\longrightarrow} \left  ( \frac{ [ \Pic ^0 ( C ) ]\LL ^{1-g}  }{ \LL - 1   } \right ) ^{ \left  |\AAA_U \right |}  \prod_{v\in C_0} F_v ( ( \LL ^{ - \rho_\alpha   } )_{\alpha_\in \AAA } , 0 ) 
\end{align*}
where $\mathfrak b_{\mm ' }$ is the $\mm '$-th coefficient of $ \prod_{v\in C_0} F_v ( \TT , 0 ) $
and 
\[
w\left  ( \tilde{\mathfrak a}^{0,\varepsilon}_{\mm} \LL^{-\langle \rho , \mm \rangle } \right  ) < - \delta_0 \langle  \varepsilon , \mm \rangle \text{ for all $\varepsilon \in \{ 0 , 1 \}^{\AAA_U} \setminus \{ \mathbf 0 \} $}
\] 
when $ \min_{\alpha \in \AAA} ( m_\alpha  ) \to \infty $ ; 
\item if $\xi \neq 0$
\[
w \left  ( \tilde{\mathfrak a}_\mm^\xi \LL^{-\langle \rho , \mm \rangle } \right ) < - \delta_\xi \left ( \langle \rho , \mm  \rangle_{\AAA\setminus \AAA_U^0 ( \xi )} + \langle \varepsilon  , \mm  \rangle_{ \AAA_U^0 ( \xi )}\right ) \text{ for all $\varepsilon \in \{ 0 , 1 \}^{\AAA_U^0 ( \xi ) }$}
\] 
when $\min ( m_\alpha )\to \infty$. 
\end{itemize}
\end{myptn}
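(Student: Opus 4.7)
The plan is to exploit the explicit expansion of $\widetilde G(\TT,\xi)$ already derived in the excerpt, identify each $\tilde{\mathfrak a}^{\xi,\varepsilon}_\mm$ with a (normalised) partial sum of a known $\rho$-weight-linearly convergent Euler product evaluated at $T_\alpha=\LL^{-\rho_\alpha}$, and then separate the "main" term from the negligible ones using Lemma \ref{lemma:negligible-terms}.

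First I would set
\[
\tilde{\mathfrak a}^{\xi,\varepsilon}_\mm = c(\xi)(-1)^{|\varepsilon|}\LL^{|\varepsilon|(g-1)}[\TT^\mm]\Bigl(\widetilde G_\varepsilon(\TT,\xi)\prod_{v\in C}F_v(\TT,\xi)\Bigr),
\]
where $c(\xi)=\bigl([\Pic^0(C)]\LL^{1-g}/(\LL-1)\bigr)^{|\AAA_U^0(\xi)|}$, so that $\widetilde Z(\TT,\xi)=\sum_{\varepsilon}\sum_\mm\tilde{\mathfrak a}^{\xi,\varepsilon}_\mm\TT^\mm$ is tautological from the expansion given in the excerpt. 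Writing $\mathfrak b^\xi_\nn$ for the $\nn$-th coefficient of $\prod_{v\in C}F_v(\TT,\xi)$ and using that $\widetilde G_\varepsilon(\TT,\xi)$ only involves variables indexed by $\AAA_U^0(\xi)$, the convolution formula yields, after dividing by $\LL^{\langle\rho,\mm\rangle}$,
\[
\tilde{\mathfrak a}^{\xi,\varepsilon}_\mm\LL^{-\langle\rho,\mm\rangle} = c(\xi)(-1)^{|\varepsilon|}\LL^{|\varepsilon|(g-1)}\sum_{\substack{\mm'\leqslant\mm\\ m'_\alpha=m_\alpha\,\forall\alpha\notin\AAA_U^0(\xi)}}\mathfrak b^\xi_{\mm'}\LL^{-\langle\rho,\mm'\rangle}\LL^{-\langle\varepsilon,\mm-\mm'\rangle}.
\]

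For the main term $(\xi,\varepsilon)=(0,\mathbf 0)$ the constraint disappears, so the right-hand side is exactly $c(0)$ times the $\mm$-th partial sum of $\sum_{\mm'\in\NN^\AAA}\mathfrak b_{\mm'}\LL^{-\langle\rho,\mm'\rangle}$. Proposition \ref{proposition:convergence-residue-of-Zeta-0} provides $\rho$-weight-linear convergence of this series toward $\prod_{v\in C_0}F_v\bigl((\LL^{-\rho_\alpha})_\alpha,0\bigr)$, and the partial sums therefore converge to the same limit in $\widehat{\mathscr M_k}$ as $\min_{\alpha\in\AAA}(m_\alpha)\to\infty$, which, multiplied by $c(0)$, is the announced value. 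For $(\xi,\varepsilon)=(0,\varepsilon)$ with $\varepsilon\neq\mathbf 0$, the second part of Lemma \ref{lemma:negligible-terms}, applied to the very same $\rho$-weight-linearly convergent series, produces $\delta_0>0$ such that $w\bigl(\tilde{\mathfrak a}^{0,\varepsilon}_\mm\LL^{-\langle\rho,\mm\rangle}\bigr)<-\delta_0\langle\varepsilon,\mm\rangle$ once $\langle\varepsilon,\mm\rangle$ is sufficiently large. For $\xi\neq 0$, Proposition \ref{proposition:convergence-residue-of-Zeta-xi} gives the analogous $\rho$-weight-linear convergence of $\sum_\nn\mathfrak b^\xi_\nn\LL^{-\langle\rho,\nn\rangle}$, with a convergence constant $\delta_\xi>0$ uniform on each stratum of the finite partition of $V\setminus\{0\}$ of Remark \ref{remark:uniform-convergence-finite-stratum}.

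The main technical obstacle lies in the bookkeeping for $\xi\neq 0$: the coordinates in $\AAA\setminus\AAA_U^0(\xi)$ are frozen at $m'_\alpha=m_\alpha$ by the support of $\widetilde G_\varepsilon$ and therefore draw their weight savings purely from $\rho$-weight-linear convergence of $\prod_v F_v(\TT,\xi)$, whereas the coordinates in $\AAA_U^0(\xi)$ range freely and only receive the $\varepsilon$-weight savings furnished by Lemma \ref{lemma:negligible-terms}. Because $\rho_\alpha\geqslant\varepsilon_\alpha$ everywhere, these two contributions combine into a single bound
\[
w\bigl(\tilde{\mathfrak a}^{\xi,\varepsilon}_\mm\LL^{-\langle\rho,\mm\rangle}\bigr) < -\delta_\xi\bigl(\langle\rho,\mm\rangle_{\AAA\setminus\AAA_U^0(\xi)}+\langle\varepsilon,\mm\rangle_{\AAA_U^0(\xi)}\bigr),
\]
obtained by estimating each summand in the convolution using the weight-linear bound for terms with $\langle\rho,\mm'\rangle$ large and treating the remaining finitely many $\mm'$ by the crude majoration argument of Lemma \ref{lemma:negligible-terms}, then invoking Proposition \ref{proposition:properties-of-the-weight}(2) to combine them. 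The uniformity in $\xi$ along each constructible stratum of $V$ is what makes the later summation over characters feasible; the separate contribution of $H(\TT,\xi)\prod_{v\in C}F_v(\TT,\xi)$, which was split off in passing from $Z(\TT,\xi)$ to $\widetilde Z(\TT,\xi)$, will be handled by an entirely parallel argument using that $H$ is a polynomial of low degree relative to $\LL$.
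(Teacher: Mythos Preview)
Your proposal is correct and follows essentially the same route as the paper: expand $\widetilde G(\TT,\xi)$ over $\varepsilon\in\{0,1\}^{\AAA_U^0(\xi)}$, recognise each $\tilde{\mathfrak a}^{\xi,\varepsilon}_\mm\LL^{-\langle\rho,\mm\rangle}$ as a convolution of $\mathfrak b^\xi_\nn\LL^{-\langle\rho,\nn\rangle}$ against a geometric weight, and then invoke Propositions \ref{proposition:convergence-residue-of-Zeta-0}--\ref{proposition:convergence-residue-of-Zeta-xi} together with Lemma \ref{lemma:negligible-terms}. Your treatment of the $\xi\neq 0$ case is in fact slightly more explicit than the paper's, which simply says ``Applying Lemma \ref{lemma:negligible-terms} to this sum for every $\varepsilon$'': you correctly isolate the frozen coordinates in $\AAA\setminus\AAA_U^0(\xi)$ and explain why their contribution to the weight bound comes from the $\rho$-weight-linear convergence of $\prod_v F_v(\TT,\xi)$ rather than from the $\varepsilon$-damping.
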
 

\subsubsection{Error term}
Now we study the contribution of $H(\TT , \xi) $, still assuming $\UUU = \XXX$. We rewrite this error term as follows, using the convenient notations $\mathbf g = ( g , ... , g ) $ and $\mathbf 1 = ( 1 , ... , 1 )$. In this paragraph $\xi$ is any character, including the trivial one; in this case $\AAA_U^0 ( 0 ) = \AAA_U$. 
\begin{align*}
H ( \TT , \xi ) 
& = 
\sum_{\varnothing \neq A\subset  \AAA_U^0 ( \xi ) } \prod_{\alpha \in \AAA_U^0 ( \xi ) \setminus A} \widetilde{Z^\text{Kap}_C } ( \LL^{\rho_\alpha -1} T_\alpha ) \prod_{\alpha \in A}  \left ( Z^\text{Kap}_C  ( \LL^{\rho_\alpha -1} T_\alpha ) - \widetilde{Z^\text{Kap}_C } ( \LL^{\rho_\alpha -1} T_\alpha ) \right ) \\
& = \sum_{ \substack{ \varnothing \neq A \subset \AAA_U^0 ( \xi )\\ \varepsilon \in \{0,1\}^{\AAA_U^0 ( \xi ) \setminus A} \times \{ 1 \}^A  }}  
[\Pic^0 ( C ) ]^{\left |\AAA_U^0 ( \xi ) \setminus A \right |}
\frac{\left (-\LL^{1-g}\right )^{ \left | \AAA_U^0 ( \xi ) \right | - |\varepsilon |  }}{(1-\LL)^{\left |\AAA_U^0 ( \xi ) \setminus A \right |}}  H_{A,\varepsilon} ( \TT , \xi ) 
\end{align*}
with for any $A \subset \AAA_U^0 ( \xi )$ non-empty and $\varepsilon \in \{0,1\}^{\AAA_U^0 ( \xi ) \setminus A} \times \{ 1 \}^A$
\[
H_{A,\varepsilon} ( \TT , \xi ) = 
\sum_{\mm\in \NN^{\AAA_U^0 ( \xi )}} \LL^{\langle \rho - \varepsilon   , \mm \rangle } \TT ^{\mm}
\prod_{\alpha \in  A} 
\left ( 
[ \Sym^{m_\alpha } C] - [\Pic^0 ( C ) ] \frac{\LL^{m_\alpha - g - 1 } -1 }{\LL - 1 } 
\right ) .
\]
Thus studying the contribution of $H(\TT , \xi )$ amounts to studying the $\mm$-th coefficient of the product 
\[
H_{A,\varepsilon} ( \TT , \xi ) \prod_{v\in C} F_v ( \TT , \xi ) 
\]
for every $A\subset \AAA_U^0 ( \xi )$ non-empty and $\varepsilon \in \{ 0 , 1 \}^{\AAA_U^0 ( \xi ) \setminus A} \times \{ 1 \}^A$. 
In what follows we fix such an $A$. 
We know that the term of multi-degree $\mm $ of $H_{A,\varepsilon} (\TT ) $
is zero whenever there is $\alpha \in A $ such that $m_\alpha \geqslant 2g - 1$.
So the $\mm$-th term of the product $H_{A,\varepsilon} ( \TT , \xi ) \prod_{v\in C} F_v ( \TT , \xi ) $ is equal to
\begin{align}
\mathfrak{e}_{\mm}^{A,\varepsilon} = \label{equ:error-term-general-curve-X=U}  
 \LL^{\langle \rho - \varepsilon  , \mm \rangle_{\AAA_U^0 ( \xi )}}
\sum_{\substack{ ( \nn , \nn ' )  \in \ZZ^\AAA \times \NN^\AAA \\ \nn + \nn ' = \mm \\  \nn_A ' \leqslant 2 ( \mathbf g_{ A} - \mathbf 1 ) }}   
\mathfrak{b}_{\nn } 
 \LL^{ - \langle \rho - \varepsilon  , \nn \rangle_{\AAA_U^0 ( \xi )} }
\prod_{\alpha \in   A} 
\left ( 
[ \Sym^{n_\alpha '  } C] - [\Pic^0 ( C ) ] \frac{\LL^{n_\alpha   '  - g - 1 } -1 }{\LL - 1 } 
\right )  
\end{align}
where $\mathfrak b_{\nn}$ is the $\nn$-th coefficient of $\prod_{v\in C} F_v ( \TT , \xi )$ and $\nn_A ' $ denotes the restriction to $A$ of $\nn ' \in \mathbf N^\AAA$. 
Since $A \neq \varnothing $, by Proposition \ref{proposition:convergence-residue-of-Zeta-0} (for the trivial character), Proposition \ref{proposition:convergence-residue-of-Zeta-xi} (for non-trivial characters) and Lemma \ref{lemma:negligible-terms} we get the existence of a $\delta  >0 $ (which only depends on the $d_\alpha ( \xi )$) such that for any $\varepsilon \in \{ 0 , 1 \}^{\AAA_U^0 ( \xi )\setminus A}\times \{ 1 \}^A $
\begin{align}\notag
& w \left ( \sum_{\substack{ ( \nn , \nn ' )  \in \ZZ^\AAA \times \NN^\AAA \\ \nn + \nn ' = \mm \\  \nn_A ' \leqslant 2 ( \mathbf g_{ A} - \mathbf 1 ) }}  
\mathfrak{b}_{\nn } 
 \LL^{ - \langle \rho , \nn \rangle}
\LL^{- \langle \varepsilon , \nn '  \rangle_{\AAA_U^0 ( \xi )} - \langle \rho , \nn '  \rangle_{\AAA \setminus \AAA_U^0 ( \xi )} }   \right )\\
&  < - \delta \left  (  \langle \varepsilon , \mm \rangle_{\AAA_U^0 ( \xi )} +  \langle \rho , \mm    \rangle_{\AAA \setminus \AAA_U^0 ( \xi )}    \right )  \label{equation:weight-control-H}
\end{align}
since for any $\nn \in \ZZ^\AAA$ one has the decomposition 
\[
\langle \rho  - \varepsilon , \nn \rangle_{\AAA_U^0 ( \xi ) } = \langle \rho , \nn \rangle - \langle \varepsilon , \nn \rangle_{\AAA_U^0 ( \xi )} - \langle \rho , \nn \rangle_{\AAA \setminus  \AAA_U^0 ( \xi )}. 
\]
Remark that the product over $A$
can only take a finite number of values, its weight is therefore bounded. 
Combining \eqref{equ:error-term-general-curve-X=U} and \eqref{equation:weight-control-H}, we conclude that there exists $\delta '  >0 $ such that 
\[
w \left (  \LL^{- \langle \rho , \mm \rangle }  \mathfrak{e}_{\mm}^{A,\varepsilon} \right )   < - \delta ' \left  (  \langle \varepsilon , \mm \rangle_{\AAA_U^0 ( \xi )} +  \langle \rho , \mm    \rangle \right )  
\]
whenever $ \langle \varepsilon , \mm \rangle_{\AAA_U^0 ( \xi )} +  \langle \rho , \mm    \rangle_{\AAA \setminus \AAA_U^0 ( \xi )}$ is large enough. 


This analysis proves that the slight modification we performed on the Kapranov Zeta function of $C$ is painless  in the case $\UUU = \XXX$. It allows us to extend the result to any smooth projective irreducible curve of genus $g$. The control of the error terms coming from $H ( \TT , \xi )$ is given by the following proposition. 
\begin{myptn}[$\UUU = \XXX$, $C=C_0$]\label{propisition:result-any-curve-decomposition-H} Let $ \xi$ be any character. 
There exists a decomposition 
\[
H ( \TT , \xi ) \prod_{v\in C_0} F_v ( \TT , \xi )  =  \sum_{\substack{\varnothing \neq A \subset \AAA_U^0 ( \xi )  \\ \varepsilon \in \{0,1\}^{\AAA_U^0 ( \xi ) \setminus A} \times \{ 1 \}^A}}  \sum_{\mm \in \mathbf Z^\AAA}  \mathfrak{h}_\mm^{A,\varepsilon} \TT^\mm
\]
and a real number $\delta_\xi > 0$ (depending only on $\prod_{v\in C} F_v ( \TT , \xi ) $) such that for all non empty subset $A \subset \AAA_U^0 ( \xi ) $ and $\varepsilon \in \{0,1\}^{\AAA_U^0 ( \xi ) \setminus A} \times \{ 1 \}^A$ 
\[
w \left  (   \LL^{- \langle \rho , \mm \rangle }  \mathfrak{h}_{\mm}^{A,\varepsilon}   \right ) < - \delta_\xi  \left  (  \langle \varepsilon , \mm \rangle_{\AAA_U^0 ( \xi )} +  \langle \rho , \mm    \rangle \right )  
\]
when $\min ( m_\alpha ) \to \infty$. 
\end{myptn}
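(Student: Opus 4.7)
The plan is to derive the statement by reorganising the explicit formula for $H(\TT,\xi)$ --- whose pieces have already been assembled just above --- and then feeding the result into the weight machinery built earlier. First, the telescoping identity
\[
\prod_{\alpha\in\AAA_U^0(\xi)} a_\alpha - \prod_{\alpha\in\AAA_U^0(\xi)} \tilde a_\alpha = \sum_{\varnothing\neq A\subset \AAA_U^0(\xi)} \prod_{\alpha\notin A} \tilde a_\alpha \prod_{\alpha\in A} (a_\alpha - \tilde a_\alpha),
\]
applied to $a_\alpha = Z_C^{\mathrm{Kap}}(\LL^{\rho_\alpha-1} T_\alpha)$ and $\tilde a_\alpha = \widetilde{Z_C^{\mathrm{Kap}}}(\LL^{\rho_\alpha-1} T_\alpha)$, breaks $H(\TT,\xi)$ into a finite sum indexed by non-empty subsets $A\subset \AAA_U^0(\xi)$. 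Further expanding every $\tilde a_\alpha$-factor with $\alpha\notin A$ by the partial fraction decomposition used above for $\widetilde G(\TT,\xi)$ produces the finite family $\{H_{A,\varepsilon}(\TT,\xi)\}_{\varepsilon}$ and, after multiplication by $\prod_{v\in C_0} F_v(\TT,\xi)$, yields the convolutional expression for $\mathfrak{h}_\mm^{A,\varepsilon}$ modelled on~(\ref{equ:error-term-general-curve-X=U}).

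The key ingredient driving the decay is the Riemann-Roch vanishing: for each $\alpha\in A$, the coefficient of $T_\alpha^m$ in $a_\alpha - \tilde a_\alpha$ is proportional to $[\Sym^m C] - [\Pic^0(C)]\tfrac{\LL^{m-g+1}-1}{\LL-1}$, which is zero as soon as $m\geq 2g-1$. This truncates the $A$-block of the inner summation index $\nn'$ to the fixed finite box $\nn'_A \leq 2(\mathbf g_A - \mathbf 1)$, so the factor $\prod_{\alpha\in A}(\dots)$ ranges over only finitely many distinct values and contributes a uniformly bounded weight.

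Combining these two observations with the identity
\[
\langle \rho - \varepsilon, \nn \rangle_{\AAA_U^0(\xi)} = \langle \rho, \nn \rangle - \langle \varepsilon, \nn \rangle_{\AAA_U^0(\xi)} - \langle \rho, \nn \rangle_{\AAA\setminus \AAA_U^0(\xi)},
\]
one rewrites $\LL^{-\langle \rho,\mm \rangle}\mathfrak{h}_\mm^{A,\varepsilon}$ as a positive geometric shift of the $\rho$-weight-linearly convergent series $\sum_\nn \mathfrak{b}_\nn \LL^{-\langle \rho, \nn \rangle}$, whose convergence is supplied by Proposition~\ref{proposition:convergence-residue-of-Zeta-0} if $\xi = 0$ and by Proposition~\ref{proposition:convergence-residue-of-Zeta-xi} otherwise. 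The quantitative second part of Lemma~\ref{lemma:negligible-terms}, applied with the shift vector $(\varepsilon, \rho|_{\AAA\setminus\AAA_U^0(\xi)})$ --- which is non-trivial precisely because $A\subset\AAA_U^0(\xi)$ is non-empty --- then provides the required strict linear weight decay, absorbing the bounded-weight truncated factor once $\min_\alpha m_\alpha$ is sufficiently large. The main delicate point I anticipate is the book-keeping needed to combine these three contributions into a single strict bound of the precise form $-\delta_\xi(\langle \varepsilon, \mm\rangle_{\AAA_U^0(\xi)} + \langle \rho, \mm\rangle)$ stated in the proposition; non-emptiness of $A$ is essential throughout, ensuring that the shift is non-trivial on $\AAA_U^0(\xi)$ and that $\langle\varepsilon,\mm\rangle_{\AAA_U^0(\xi)}$ grows unboundedly with $\min_\alpha m_\alpha$.
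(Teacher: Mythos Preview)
Your proposal is correct and follows essentially the same route as the paper: the same telescoping expansion of $H(\TT,\xi)$ over non-empty $A\subset\AAA_U^0(\xi)$, the same partial-fraction decomposition of the $\widetilde{Z_C^{\mathrm{Kap}}}$ factors, the same Riemann--Roch truncation $\nn'_A\leq 2(\mathbf g_A-\mathbf 1)$, the same algebraic identity for $\langle\rho-\varepsilon,\nn\rangle_{\AAA_U^0(\xi)}$, and the same appeal to Lemma~\ref{lemma:negligible-terms} combined with Propositions~\ref{proposition:convergence-residue-of-Zeta-0} and~\ref{proposition:convergence-residue-of-Zeta-xi}. The anticipated book-keeping is indeed mild and handled in the paper exactly as you outline.
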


\subsubsection{$S$-integral points and the trivial character}
We now start treating the general case $\UUU \subset \XXX$. For clarity's sake we will keep using the coarse height zeta function $Z(\TT ) $ until it becomes necessary to switch to its refined version $\ZZZ ( \UU)$, that is, when we will integrate the Euler product over $S$ to the global one over $C$. 

\smallskip 

In order to treat the case of $S$-integral points, as in \cite[\S 2.3]{chambert2016motivic} we introduce a simplicial complex encoding the intersecting data of the boundary's components. Let $Y$ be a smooth algebraic variety over a field $L$ and $\Delta $ a divisor on $Y$ with strict normal crossings. The Clemens complex $\Cl ( Y , \Delta ) $ is the simplicial complex whose vertices are the irreducible components $( \Delta _i )_{i\in I}$ of $\Delta$. It has an edge between $\Delta_i$ and $\Delta_j $ for $i\neq j$ if and only if $\Delta_i \cap \Delta_j \neq \varnothing$. It has a two dimensional face, given by the vertices $\Delta_i $, $\Delta_j$ and $\Delta_k$, for $i,j,k$ pairwise distinct, if and only if $\Delta_i \cap \Delta_j \cap \Delta_k \neq \varnothing$, and so on for higher dimensional faces: in general, a subset $J\subset I$ corresponds to a face of dimension $|J|-1$ of $\Cl ( Y , \Delta )$ if and only if $\Delta_J = \cap_{j\in J} \Delta_j \neq \varnothing$. 
Since we assumed that $Y$ is smooth and $\Delta$ has strict normal crossings, the intersections we are considering are smooth as well. 

Then a maximal face of $\Cl ( Y , \Delta ) $ is a simplex whose vertices are indexed by a subset $J\subset I$ such that $\Delta_J \neq \varnothing $ and $\Delta_J  \cap \Delta_k = \varnothing $ for any $k\in I \setminus J$. In particular, the dimension of $\Cl ( Y , \Delta ) $ is the maximal number of components of $\Delta$ with non-empty intersection minus one. 

The $L$-Clemens complex\footnote{This complex is called \textit{analytic Clemens complex} in \cite{chambert2016motivic}.} $\Cl_L ( Y , \Delta ) $ is defined in a similar way, by restriction to $L$-points. More precisely, $\Cl_L ( Y , \Delta ) $ is the subcomplex of $\Cl  ( Y , \Delta ) $ consisting of simplices $\Delta_J \in \Cl ( Y , \Delta ) $ such that $\Delta_J ( L ) \neq \varnothing $. The set of maximal faces of $\Cl _L ( Y , \Delta ) $ is $\Cl_L^\mathrm{max} ( Y , \Delta ) $.

\smallskip 

Let $s \in S$. 
By \cite[\S 6]{chambert2016motivic} and \cite[\S 6.3.6]{bilu2018motivic} we have 
\[
Z_s ^{\beta_s} ( \TT , 0 ) = \sum_{ A \subset \AAA} \LL^{\rho^\beta}  [\Delta_s ( A , \beta ) ] \LL^{-n+|A|} ( 1 - \LL^{-1})^{|A|} \prod_{\alpha\in A} \frac{\LL^{\rho_\alpha -1}T_\alpha}{1- \LL^{\rho_\alpha -1}T_\alpha} .
\]
Our goal here is to isolate the contribution to the poles of the boundary $D$.
To do so, following \cite{bilu2018motivic,chambert2016motivic} we associate to any pair $(A,\beta ) $ 
such that $\Delta_s ( A\cap \AAA_D , \beta ) \neq \varnothing $ 
a maximal subset $\mface_s $ of $\AAA_D$ such that 
$A\cap \AAA_D \subset \mface_s $ and $\Delta_s ( \mface_s , \beta ) \neq \varnothing $. 
For a general $A \subset \AAA$,  there is no canonical choice of such a maximal subset: we arbitrary choose such a map. This will not be a key issue, as we will see in Remarks \ref{remark:maximal-faces-canonical-choice} \& \ref{remark:maximal-faces-value-polynomial} below. 
We assume furthermore that $\Delta_s ( \mface_s , \beta ) $
has a $F_s$-point; this assumption is natural when one considers sections intersecting, above $s$, the divisors $\DDD_\alpha$ given by $\mface_s \subset \AAA_D$, see Remark \ref{remark:moduli-space-curves-max-faces}. 
\begin{mylemma}
Such an $\mface _s \subset \AAA_D$ corresponds to a maximal face of the analytic Clemens complex $\Cl_s ( X , D ) = \Cl_{F_s} ( X_{F_s} , D_{F_s} )$.
\end{mylemma}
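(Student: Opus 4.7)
The plan is to exploit the assumption that $\XXX$ is a good model: $\XXX$ is smooth over $\mathcal{O}_s$ and $\sum_{\alpha \in \AAA}\DDD_\alpha + \XXX_s$ has strict normal crossings, so every intersection $\bigcap_{\alpha \in J} \DDD_\alpha$ (for $J \subset \AAA_D$) is smooth over $\Spec(\mathcal{O}_s)$ and meets each irreducible component $E_\beta$ of multiplicity one transversely. This transversality allows a back-and-forth, via Hensel's lemma, between $k$-points of the strata $\Delta_s(A,\beta)$ in $\XXX_s$ and $F_s$-points of the corresponding boundary strata in $X_{F_s}$.

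First I would show that $\mface_s$ defines a face of $\Cl_{F_s}(X_{F_s},D_{F_s})$, i.e.\ that $D_{\mface_s}(F_s) \neq \varnothing$. By construction $\Delta_s(\mface_s,\beta)$ is non-empty and, $k$ being algebraically closed, it contains a $k$-point $p$. This $p$ lies in $\bigcap_{\alpha \in \mface_s} \DDD_{\alpha,s} \cap E_\beta$ and, since $\mu_\beta = 1$ and $\sum_\alpha \DDD_\alpha + \XXX_s$ has strict normal crossings at $p$, it is a smooth $k$-point of the $\mathcal{O}_s$-smooth scheme $\bigcap_{\alpha \in \mface_s} \DDD_\alpha$. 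Hensel's lemma then lifts $p$ to an $\mathcal{O}_s$-point of $\bigcap_{\alpha \in \mface_s} \DDD_\alpha$, whose generic fibre is the required $F_s$-point of $D_{\mface_s}$.

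Next I would establish maximality. Assume for contradiction that some $\alpha' \in \AAA_D \setminus \mface_s$ satisfies $D_{\mface_s \cup \{\alpha'\}}(F_s) \neq \varnothing$. By the valuative criterion of properness applied to $\XXX \to C$, any such $F_s$-point extends to an $\mathcal{O}_s$-point of the integral model $\bigcap_{\alpha \in \mface_s \cup \{\alpha'\}} \DDD_\alpha$, whose reduction is a $k$-point $\bar q$ of $\bigcap_{\alpha \in \mface_s \cup \{\alpha'\}} \DDD_{\alpha,s}$. The crucial step is to arrange that $\bar q$ lie in $E_\beta$: here I would use the $F_s$-point hypothesis on $\Delta_s(\mface_s,\beta)$ together with Hensel's lemma inside the smooth scheme $\bigcap_{\alpha \in \mface_s} \DDD_\alpha$, deforming the $F_s$-point of $D_{\mface_s}$ provided by the first step so that its specialization moves within $\Delta_s(\mface_s,\beta)$ and hits $\DDD_{\alpha',s}\cap E_\beta$. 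Enlarging $\mface_s\cup\{\alpha'\}$ if necessary to the maximal $A \subset \AAA_D$ of components passing through $\bar q$ with no additional $E_{\beta'}$, we then get $\Delta_s(A,\beta) \neq \varnothing$ with $A \supsetneq \mface_s$, contradicting the maximality of $\mface_s$.

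The main obstacle I anticipate is precisely this last step: guaranteeing that the specialization of an $F_s$-point witnessing a strictly larger face can be made to land inside the prescribed vertical component $E_\beta$, rather than in some other multiplicity-one component. This relies on the transversality supplied by the SNC hypothesis and on the precise role of the $F_s$-point assumption on $\Delta_s(\mface_s,\beta)$, which forces compatibility between the generic and the special fibre at the level of the fixed vertical component $E_\beta$.
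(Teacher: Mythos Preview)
Your overall strategy matches the paper's proof: argue by contradiction, pass from an $F_s$-point of a strictly larger boundary stratum to an $\mathcal{O}_s$-point via the valuative criterion of properness, reduce to the special fibre, and use smoothness (Hensel) to conclude that the reduction lies in some $\Delta_s(\mface_s'',\beta)$ with $\mface_s'' \supsetneq \mface_s$. Your explicit first step---lifting a $k$-point of $\Delta_s(\mface_s,\beta)$ via Hensel to exhibit an $F_s$-point of $D_{\mface_s}$, hence showing $\mface_s$ is a face of $\Cl_{F_s}(X_{F_s},D_{F_s})$---is a welcome addition that the paper leaves implicit.

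The obstacle you flag is genuine, and it is precisely where your proposal and the paper diverge. The paper does not attempt to force the reduction $\bar q$ into the prescribed component $E_\beta$ in general; instead it first treats the case $|\BBB_{1,s}|=1$, where the special fibre has a single multiplicity-one component and the issue simply does not arise, and then disposes of the general case in one sentence by observing that the argument produces \emph{some} multiplicity-one component $\beta'$ with $\Delta_s(\mface_s',\beta')\neq\varnothing$. Your proposed resolution---``deforming the $F_s$-point of $D_{\mface_s}$ \dots\ so that its specialization moves within $\Delta_s(\mface_s,\beta)$ and hits $\DDD_{\alpha',s}\cap E_\beta$''---does not work as stated: you would need to move an $F_s$-point inside $D_{\mface_s}$ until it lands in the codimension-one locus $D_{\mface_s}\cap D_{\alpha'}$ while simultaneously controlling its reduction to stay in $E_\beta$, and nothing in the SNC hypothesis provides such a deformation. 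In particular there is no reason the closed subscheme $\DDD_{\mface_s'}=\DDD_{\mface_s}\cap\DDD_{\alpha'}$ should meet the chosen $E_\beta$ at all, even when $\DDD_{\mface_s}$ does.

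So the cleaner route is the paper's: reduce to the case of a single multiplicity-one vertical component, where the reduction of any $\mathcal{O}_s$-point automatically lies in $E_\beta$, rather than trying to engineer this in the general case.
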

\begin{proof}
 By definition, $\Delta_s ( A , \beta )$ is the set of points of the fibre $\XXX_s $ belonging exclusively to $\DDD_{\alpha } $ and $E_\beta $, that is to say 
 \[
 \Delta_s ( A , \beta ) = \left ( \bigcap_{\alpha \in A } \DDD_{\alpha , s } \cap E_\beta \right )  \setminus \left ( \bigcup_{\substack{\alpha \notin A \\ \beta ' \neq \beta} } \DDD_{\alpha , s } \cup  E_{\beta '}  \right ) .
 \]
For simplicity we assume that $\BBB_{1,s} = \{ \beta \}$. 
 
We argue by contradiction: assume that $\mface_s $ is not a maximal face of $\Cl_{F_s} (  X_{F_s} , D_{F_s} )$. It means that there exists a non-empty subset $\mface_s ' \subset \AAA_D $ which contains $\mface_s $ as a proper subset and such that 
\[
  \left  ( \bigcap_{\alpha \in \mface_s '  } \left (D_{\alpha }\right )_{F_s} \right  ) ( F_s )  \neq \varnothing .
\]
We can assume  furthermore that $\mface_s ' $ is maximal for this property. In other words $\mface_s ' $ is a maximal face of $\Cl_{F_s} (  X_{F_s} , D_{F_s} ) $ containing $\mface_s$ as a proper subface. 
Remark that since $X \setminus U = \cup_{\alpha \in \AAA_D} D_\alpha$ and $X_{F_s} ( F_s ) = X ( F_s) $, together with the maximality of $\mface_s ' $, then 
\begin{equation}\label{equation:F-points-of-intersections}
\bigcap_{\alpha \in \mface_s '} D_\alpha  \cap D_{\alpha ' } (F_s) = \varnothing 
\end{equation}
for every $\alpha '  \in \AAA \setminus \mface_s '$.

Our argument relies on the fact that $\XXX \to C$ is proper.  
Composing with the projection $X_{F_s} \to \XXX$ and applying the valuative criterion of properness, this $F_s$-point $x_s$ uniquely lifts to a $\mathcal O_s$-point $\widetilde{x_s}$  of $\XXX$. 
\begin{center}
\begin{tikzcd}
X_{F_s} \arrow[rr] & & \XXX \arrow[d,"\pi"]   \\
\Spec ( F_s ) \arrow[r] \arrow[u , "x_s" ] &  \Spec ( \mathcal O_s  ) \arrow[ur, dashed,  "\widetilde{x_s}" '] \arrow[r] & C  
\end{tikzcd}
\end{center}
Since for every $\alpha$, $\DDD_\alpha $ is the closure in $\XXX$ of $D_\alpha$,  $\widetilde{x_s}$ is actually a $\mathcal O_s$-point of $\bigcap_{\alpha \in \mface_s '  } \DDD_{\alpha } $.  
By reduction modulo the maximal ideal of $\mathcal O_s$, one gets a well-defined reduction $\overline{x_s}$ to the special fibre $\XXX_s $.
\begin{center}
\begin{tikzcd}
\XXX_s \arrow[r] \arrow[d] & \XXX_{\mathcal O_s} \arrow[d]\arrow[r] & \XXX \arrow[d] \\
\Spec ( k ) \arrow[r] \arrow[u, shift right=1.5ex, dashed, "\overline{x_s}" '] & \Spec ( \mathcal O_s ) \arrow[r] \arrow[u, shift right=1.5ex, dashed] \arrow[ur, dashed , "\widetilde{x_s}"] & C 
\end{tikzcd}
\end{center}
This very last point $\overline{x_s} $ is actually a point of $\bigcap_{\alpha \in \mface_s '  } \DDD_{\alpha , s }$ which does not belong to any of the $\DDD_{\alpha ' }$ for $\alpha ' \in \AAA \setminus \mface_s ' $: since all the intersections we are considering are smooth, we use Hensel's lemma here.  
This contradicts the maximality of $\mface_s  $ with respect to the hypothesis $\Delta_ s ( \mface_s , \beta ) \neq \varnothing$, since we have found a $\mface_s '$ strictly containing $\mface_s$ and such that $\Delta_s ( \mface_s ' , \beta )$ is non-empty. Thus $\mface_s $ is a maximal face of $\Cl_s ( X , D )$.
In the general case where the fibre above $s$ is not irreducible, then the above argument gives at least one irreducible component $\beta$ for which $\Delta_ s ( \mface_s ' , \beta ) \neq \varnothing $. The conclusion follows.
\end{proof}

\begin{myremark}\label{remark:maximal-faces-canonical-choice}
If $\mface _s$ is a maximal face of $\Cl_s ( X , D )$, then for every $A\subset \AAA_U$ and $\beta \in \BBB_{1,s}$ the pair $(A \cup \mface_s , \beta ) $ is sent to $\mface _s$. 
\end{myremark}

Collecting the terms with respect to every such a maximal face $\mface_s$, one gets 
\begin{align*}
Z_s^{\beta_s} ( \TT , 0 ) & = \sum_{\mface_s \in  \Cl_s^{\mathrm{max} }( X , D )} \sum_{\substack{A \subset \AAA\\ ( A , \beta_s ) \mapsto \mface_s }}   \LL^{\rho^\beta}  [\Delta_s ( A , \beta ) ] \LL^{-n+|A|} ( 1 - \LL^{-1})^{|A|} \\
& \times \prod_{\alpha\in A\setminus \mathscr \mface_s } \frac{\LL^{\rho_\alpha -1}T_\alpha}{1- \LL^{\rho_\alpha -1} T_\alpha}  \prod_{\alpha\in A \cap \mathscr \mface_s} \frac{\LL^{\rho_\alpha -1}T_\alpha}{1- \LL^{\rho_\alpha -1}T_\alpha} 
\\
& = \sum_{\mface_s \in  \Cl_s^{\mathrm{max} }( X , D )}  \frac{P_{\mface_s}^{\beta_s} ( \TT ) }{\prod_{\alpha \in \AAA_U} ( 1-\LL^{\rho_\alpha -1 } T_\alpha )} \prod_{\alpha \in \mface_s } \frac{1}{1-\LL^{\rho_\alpha -1 } T_\alpha } 
\end{align*}
where for all $\mface_s \in  \Cl_s^{\mathrm{max}} ( X , D )$
\begin{align*}
& P_{\mface_s}^{\beta_s} ( \TT ) =  \sum_{\substack{A \subset \AAA \\ ( A , \beta_s ) \mapsto \mface_s }} 
\LL^{\rho^\beta}  \left (  \prod_{\alpha\in A } \LL^{\rho_\alpha -1}T_\alpha \right )\\ 
 &  \times 
 [\Delta_s ( A , \beta_s ) ] \LL^{-n} ( \LL -1)^{|A|} 
\prod_{\alpha \in ( \AAA_U \setminus A) \cup ( \mface_s \setminus A)} ( 1- \LL^{\rho_\alpha -1}T_\alpha ) .
\end{align*}
\begin{myremark}\label{remark:maximal-faces-value-polynomial} 
In the expression of $ P_{\mface_s}^{\beta_s} ( \TT ) $, the polynomials $(1-\LL^{\rho_\alpha '  } T_\alpha )$ for $\alpha \in \mathscr \mface_s$ give vanishing factors at $T_\alpha = \LL^{-\rho_\alpha ' }$. In particular, for any $A \subset \AAA$ such that $A \cap \AAA_D \subset \mface_s$, we have 
\[ 
 \prod_{\alpha \in ( \AAA_U \setminus A) \cup ( \mface_s \setminus A)} ( 1- \LL^{\rho_\alpha -1}T_\alpha )  ( \LL^{-\rho_\alpha ' } ) \neq 0 
 \] if and only if $A \cap \AAA_D =\mface_s$, and thus using Remark \ref{remark:maximal-faces-canonical-choice}
 \begin{align*}
 P_{\mface_s } ^{\beta_s}\left  ( \left  ( \LL^{-\rho_\alpha ' } \right )_{\alpha \in \AAA} \right ) 
 & =    \sum_{\substack{A \subset \AAA \\ A \cap \AAA_D = \mface_s }} 
\LL^{\rho^\beta}   \LL^{- | A \cap \AAA_U |}
 [\Delta_s ( A , \beta_s ) ] \LL^{-n} ( \LL -1)^{|A|} 
\left  ( 1- \LL^{-1} \right )^{|\AAA_U \setminus A |} .  \\
 & = 
 ( \LL - 1 )^{| \mface_s |}
  ( 1 - \LL^{-1} )^{|\AAA_U| } \LL^{-n} 
  \LL^{\rho^{\beta_s}   } 
 \sum_{ A \subset \AAA_U }    [\Delta_s ( \mface_s \cup A , \beta_s ) ] . 
 \end{align*}
 
 By definition of $\Delta_s ( \mface_s \cup A , \beta )$ one has 
 \[
 \sum_{A\subset \AAA_U}  [\Delta_s ( \mface_s \cup A , \beta )] =  [  E_{\beta_s}^\circ \cap \DDD_{\mface_s } ] 
 \]
 where $E_{\beta}^\circ $ stands for  $E_{\beta} \setminus \cup_{\beta ' \neq \beta } E_{\beta ' }$ and $\DDD_{\mface_s } $ is the intersection $\cap_{\alpha \in \mface_s}  \DDD_{\alpha , s}$.
Finally the value of $P_{\mface_s}^{\beta_s}$ at $\left  ( \LL^{-\rho_\alpha ' } \right )_{\alpha \in \AAA} $ is 
\[
P_{\mface_s}^{\beta_s} \left  ( \left  ( \LL^{-\rho_\alpha ' } \right )_{\alpha \in \AAA} \right ) 
 =
  ( \LL - 1 )^{| \mface_s |}
  ( 1 - \LL^{-1} )^{|\AAA_U| } 
  \LL^{\rho^{\beta_s}   }  [  E_{\beta_s}^\circ \cap \DDD_{\mface_s } ] \LL^{-n}  . 
 \]
\end{myremark}
 
Now we are able to consider the whole Euler product for $\xi = 0$ and evaluate its coefficients. 
For any $\beta \in \prod_v \BU_v$, which we call a \textit{choice of vertical components} (recall that this product is actually finite, see \S \ref{subsection:back-to-our-setting} for the definition of $\BU_v$), let us refine Notation \ref{notation:F_v(T,0)} by setting for every $v\in C(k)$ 
\[
\FFF_v^\beta ( \UU , 0 )  = \prod_{\alpha \in \AAA_U } ( 1 - \LL^{\rho_\alpha - 1} U_\alpha )  \ZZZ_v^\beta ( \UU , 0 ) 
\]
where $\ZZZ_v^\beta ( \UU , 0 ) $ is the local factor of the refined zeta function $\ZZZ^\beta ( \UU , 0 ) $, see Definition \ref{def:refined-zeta-function}.
This local factor only depends on the indeterminates $\UU_0$ when $v \in C_0 ( k ) $ and on $( \UU_0, \UU_s ) $ when $v=s\in S$. 
Similarly we define $\PPP_{\mface_s}^{\beta} ( \UU_0 , \UU_s ) $ starting from $P_{\mface_s}^{\beta} ( \TT) $ for every $s\in S$
and we write $\Cl_S^{\mathrm{max}} ( X , D ) $ for the product $ \prod_{s\in S} \Cl_s^{\mathrm{max}} ( X , D ) $. 
Then 
\begin{align*}
\ZZZ^\beta ( \UU , 0 ) & = \prod_{v\in C} \left ( \prod_{\alpha \in \AAA_U } ( 1 - \LL^{\rho_\alpha -1} U_\alpha  )^{-1}  \FFF_v^\beta ( \UU , 0 ) \right ) \\
&= \prod_{\alpha \in \AAA_U } Z_C ( \LL^{\rho_\alpha ' - 1 } U_\alpha )  \prod_{v\in C} \FFF_v^\beta ( \UU , 0 )\\ 
& =  \left ( \prod_{\alpha \in \AAA_U } Z_{C} \left ( \LL^{\rho_\alpha ' - 1 } U_\alpha \right )  \prod_{v\in C_0} \FFF_v^\beta ( \UU_0 , 0 )\right ) \\
&  \times  \prod_{s\in S}  \left (  \sum_{\mface_s \in  \Cl_s^{\mathrm{max}} ( X , D )}  \PPP_{\mface_s}^{\beta} ( \UU_0 , \UU_s ) \prod_{\alpha \in \mface_s } \frac{1}{1-\LL^{\rho_\alpha -1 } U_{\alpha  , s}  }   \right )\\
& = \sum_{ \mface \in \Cl_S^{\mathrm{max}} ( X , D )} \ZZZ_\mface^\beta ( \UU , 0 )  
\end{align*}
where for every $\mface \in \Cl_S^{\mathrm{max}} ( X , D ) \subset  \mathcal P \left ( \AAA_D \right ) ^S $
\[
\ZZZ_\mface^\beta ( \UU , 0 )  =
\left ( \prod_{s\in S} \prod_{\alpha \in \mface_s} \frac{1}{1-\LL^{\rho_\alpha  - 1 } U_{\alpha , s} } \right ) 
\left (  \prod_{\alpha \in \AAA_U} Z_{C} ( \LL^{\rho_\alpha ' - 1 } U_\alpha ) \right ) 
\left ( \prod_{v\in C_0} \FFF_v ^\beta ( \UU_0 , 0  ) \right ) 
\left ( \prod_{s \in S} \PPP_{\mface_s}^\beta ( \UU_0  , \UU_s )  \right ) .
\]
This series only depends on the inderminates indexed by $\AAA_U$ or $\mface $.
Let us introduce the series 
\[
\GGG_\mface (\UU , 0 ) = 
 \prod_{\alpha \in \AAA_U} Z_C \left ( \LL^{\rho_\alpha ' - 1 } U_\alpha \right ) \prod_{s\in S} \prod_{\alpha \in \mface_s }  ( 1  - \LL_s^{\rho_\alpha ' } U_{\alpha , s}  )
\]
for all choice of maximal faces $\mface\in \Cl_S^{\mathrm{max}} ( X , D )$. 
These notations allow us to rewrite $\ZZZ_{\mface} ( \TT , 0 )$ as 
\[
\ZZZ_{\mface}^\beta ( \UU , 0 ) =  \GGG_\mface (\UU , 0 )  \prod_{v\in C_0} \FFF_v ^\beta ( \UU_0 , 0  ) \prod_{s \in S} \PPP_{\mface_s}^\beta ( \UU_0 , \UU_s ) . 
\]
We may decompose $\GGG_\mface (\UU , 0 )$ as 
\[
\GGG_\mface (\UU , 0 ) = \widetilde{\GGG_\mface} ( \UU , 0 )  + \HHH_\mface ( \UU , 0 )
\]
where 
\begin{align*}
\widetilde{\GGG_\mface} ( \UU , 0 ) & = \widetilde{G} ( \UU_0 , 0 ) \prod_{s\in S } \prod_{\alpha \in \mface_s} ( 1  - \LL_s^{\rho_\alpha ' } U_{\alpha , s}  )^{-1}\\
\HHH_\mface ( \UU , 0 ) & =  H ( \UU_0 , 0 ) \prod_{ s\in S } \prod_{\alpha \in \mface_s}  ( 1  - \LL_s^{\rho_\alpha ' } U_{\alpha , s}  )^{-1} 
\end{align*}
with $\widetilde{G} ( \TT , 0 )$ and $H ( \TT , 0 )$ defined in (\ref{eq-def:G-tilde(T,xi)}) and (\ref{eq-def:H(T,xi)}) 
(this definition is licit since $\widetilde{G} ( \TT , 0 )$ and $H ( \TT , 0 )$ 
only depends on the indeterminates indexed by $\AAA_U$). 
This leads to a decomposition 
\[
\ZZZ_\mface^\beta ( \UU , 0 )  = \widetilde{\ZZZ_\mface^\beta} ( \UU , 0 )  +  \HHH_\mface ( \UU , 0 )  \left ( \prod_{v\in C_0} \FFF_v^\beta ( \UU_0 , 0  ) \right ) \left ( \prod_{s \in S} \PPP_{\mface_s}^{\beta_s} ( \UU_0 , \UU_s ) \right )
\]
where
\[
\widetilde{\ZZZ_\mface^\beta} ( \UU , 0 ) = \widetilde{\GGG_\mface} ( \UU , 0 ) \left (   \prod_{v\in C_0} \FFF_v^\beta ( \UU_0 , 0  ) \right ) \left (  \prod_{s \in S} \PPP_{\mface_s}^{\beta_s} ( \UU_0 , \UU_s ) \right ) . 
\]
Now we are ready to adapt the $\UUU = \XXX$ case 
to the study of the coefficients of $\ZZZ_\mface^\beta ( \UU , 0 ) $ 
and $\widetilde{\ZZZ_\mface^\beta} ( \UU , 0 )  $ for a fixed $\mface \in \Cl_S^{\mathrm{max}} ( X , D )$. 
We are going to show that asymptotically the main contribution in this decomposition of $\ZZZ_\mface^\beta ( \UU , 0 )$ 
comes from $\widetilde{\ZZZ_\mface^\beta} ( \UU , 0 ) $ 
while the error term coming from $\widetilde{\ZZZ_\mface^\beta} ( \UU , 0 ) - \ZZZ_\mface^\beta ( \UU , 0 )$ 
is negligible. 

An expression of $\widetilde{\GGG_\mface} ( \UU , 0 )$ is given by 
\[
\widetilde{\GGG_\mface} ( \UU , 0 ) = 
\left  ( \frac{ [ \Pic ^0 ( C ) ] \LL^{1-g}    }{ \LL - 1  } \right ) ^{ |\AAA_U | }  \sum_{\varepsilon \in \{0,1\}^{\AAA_U}} (-1)^{|\varepsilon | }   \widetilde{\GGG_{\mface , \varepsilon}}(\UU , 0  )  \LL^{ |\varepsilon | (g-1)}
\]
where for all $\varepsilon \in \{0,1\}^{\AAA_U}$ 
\begin{align*}
\widetilde{\GGG_{\mface , \varepsilon }}(\UU , 0 ) &  = \prod_{\alpha\in \AAA_U } \left ( 1-\LL^{\rho_\alpha '  -\varepsilon_\alpha } U_\alpha \right )  \prod_{s\in S} \prod_{\alpha \in \mface_s} ( 1  - \LL_s^{\rho_\alpha ' } U_{\alpha ,s }  )^{-1}  \\
& = \sum_{ \substack{ \m_0\in \NN^{\AAA_U} \\ (\m_s) \in \prod_{s\in S}\NN^{\mface_s} }  }   \LL^{\langle \rho  ' - \varepsilon , \m_0 \rangle_{\AAA_U} + \sum_{s\in S} \langle \rho ' , \m_s \rangle_{\mface_s}} \UU^{\m_0 + ( \m_s )_{s\in S}} .
\end{align*}
The coefficient of order $\mm \in \NN^{\AAA_U} \times \NN^{\mface} \subset \NN^{\AAA_U} \times \NN^{\AAA_D \times S}$ of $\widetilde{\ZZZ_\mface^\beta} ( \UU  , 0 ) $
is equal to
\begin{equation}\label{equation:main-term-for-beta-coeff}
\sum_{ \substack{ \nn , \nn ' \in \NN^{\AAA_U } \times \NN^\mface \\ \nn + \nn' =  \mm }} \mathfrak{c}_\mface^\beta  ( {\nn } )  \LL^{\langle \rho ' - \varepsilon , \nn ' \rangle _{\AAA_U \sqcup \mface}} 
\end{equation}
where $\mathfrak{c}_\mface^\beta  ( {\nn } )  $ is the $\nn $-th coefficient (with $\nn \in \NN^{\AAA_U} \times \NN^{\mface}$) of 
\[
\prod_{v\in C_0} \FFF_v^\beta ( \UU_0 , 0  ) \prod_{s\in S} P_{\mface_s}^{\beta_s}  ( \UU_0 , \UU_s ) 
\] 
and $\AAA_U \sqcup \mface$ is the disjoint union of $\AAA_U$ and $\mface$ in $\AAA_U \sqcup \AAA_D^S $. 
After normalising by $\LL^{\langle \rho ' , \mm \rangle_{\AAA_U \sqcup \mface}}$, the coefficient
(\ref{equation:main-term-for-beta-coeff}) for $\varepsilon = \mathbf 0$ is the $\mm ' $-th partial sum of the motivic Euler product 
\[ \left ( \prod_{v\in C_0} \FFF_v^\beta ( \UU_0 , 0 )\right )  (  ( \LL^{-\rho_\alpha ' } )_{\alpha \in \AAA_U} )  \times  \prod_{s\in S} \left ( P_{\mface_s}^{\beta_s} ( \UU_0, \UU_s )  (  ( \LL^{-\rho_\alpha ' } )_{\alpha \in \AAA_U}  ,  ( \LL^{-\rho_\alpha ' } )_{\alpha \in \AAA_D}   )  \right )
\]
to which one can apply Proposition \ref{proposition:convergence-residue-of-Zeta-0}.  
If $\varepsilon \neq \mathbf 0 $,
the only difference is again the factor $\LL^{\langle \varepsilon , \mm - \mm ' \rangle}$, which is controlled with Lemma \ref{lemma:negligible-terms} as in the case $\UUU = \XXX$. Since by Lemma \ref{lemma:weight-convergence-product-stability} we know that weight-linear convergence is stable by finite product, we deduce the following proposition. 

\begin{myptn}\label{proposition:general-case-Z-tilde-xi=0}
For all choice of maximal faces $\mface \in\Cl_S^{\mathrm{max}} ( X , D ) $ and vertical components $\beta \in \prod_v \BU_v$,
there exists a decomposition 
\[
\widetilde{\ZZZ_\mface^\beta} ( \UU , 0 ) = 
\sum_{\mm \in \NN ^{\AAA_U} \times \NN^\mface } 
\tilde{\mathfrak a} ^\beta _{\mm}
 \UU^{\mm}  
=
\sum_{\varepsilon \in \{ 0 , 1 \}^{\AAA_U } }   \sum_{\mm \in \NN ^{\AAA_U} \times \NN^\mface } 
\tilde{\mathfrak a} _{\mm} ^{\beta , \varepsilon } 
 \UU^{\mm}  
\]
as well as a real number $\delta > 0$ such that in $\widetilde{\mathscr M_k } $
\[
\tilde{\mathfrak a} _{\mm} ^{\beta , \mathbf 0 }   \LL^{ - {\langle \rho ' , \mm \rangle}_{\AAA_U \sqcup\mface}  } 
\]
converges $\rho'$-weight-linearly to the non-zero effective element
\begin{align*}
&  \left  ( \frac{ [ \Pic ^0 ( C ) ] \LL^{1-g}    }{ \LL - 1  } \right ) ^{ \rg (\Pic (U)) } 
\prod_{v\in C_1 } \left ( 
 (1-\LL^{-1})^{\rg (\Pic (U))} \frac{[\UUU_v]}{\LL^{n}} 
 \right) 
\times \prod_{v\in C_0 \setminus C_1}   \left  ( (1-\LL^{-1})^{\rg (\Pic (U)) }  \LL^{\rho^{\beta_v}  } \frac{[ E_{\beta_v}^\circ ]}{\LL^n }  \right ) \\
&  \times \prod_{s\in S}  \left ( (1  - \LL^{-1})^{|\mface_s |+ \rg ( \Pic ( U )) } \LL^{\rho^{\beta_s}  } \frac{ [  E_{\beta_s}^\circ \cap \DDD_{\mface_s } ]\LL^{|\mface_s |}  }{\LL^n} \right )
\end{align*}
and for every $\varepsilon \in \{ 0 , 1 \}^{\AAA_U} \setminus \{ \mathbf 0 \}$ 
\[
w \left  ( \tilde{\mathfrak a} _{\mm} ^{\beta , \varepsilon } 
  \LL^{ - \langle \rho ' , \mm \rangle_{\AAA_U \sqcup \mface} }  \right ) < - \delta \langle \varepsilon , \mm \rangle_{\AAA_U}   
\] 
when $\min ( m_\alpha ) \to \infty$.

\end{myptn}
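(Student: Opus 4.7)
The plan is to mirror the treatment already carried out for $\UUU=\XXX$, replacing the global factor $\widetilde{G}(\UU_0,0)$ by $\widetilde{\GGG_\mface}(\UU,0)$ and observing that the additional local factors $\PPP_{\mface_s}^{\beta_s}(\UU_0,\UU_s)$ coming from $s\in S$ are \emph{polynomials} whose coefficients have bounded weight, so they interact painlessly with the $\rho'$-weight-linear convergence machinery. Concretely, I would first expand
\[
\widetilde{\GGG_\mface}(\UU,0)=\left(\frac{[\Pic^0(C)]\LL^{1-g}}{\LL-1}\right)^{|\AAA_U|}\sum_{\varepsilon\in\{0,1\}^{\AAA_U}}(-1)^{|\varepsilon|}\LL^{|\varepsilon|(g-1)}\widetilde{\GGG_{\mface,\varepsilon}}(\UU,0),
\]
and define $\tilde{\mathfrak a}_\mm^{\beta,\varepsilon}$ as the $\mm$-th coefficient of the corresponding $\varepsilon$-summand of $\widetilde{\ZZZ_\mface^\beta}(\UU,0)$. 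Expanding the geometric series hidden in $\widetilde{\GGG_{\mface,\varepsilon}}$ (in the $U_{\alpha,s}$ for $\alpha\in\mface_s$) against the polynomial factor in the $U_\alpha$ for $\alpha\in\AAA_U$, the coefficient takes the convolution form
\[
\tilde{\mathfrak a}_\mm^{\beta,\varepsilon}=c_\varepsilon\sum_{\substack{\nn+\nn'=\mm\\ \nn'\in\NN^{\AAA_U}\times\NN^\mface}}\mathfrak c_\mface^\beta(\nn)\,\LL^{\langle\rho'-\varepsilon,\nn'\rangle_{\AAA_U}+\langle\rho',\nn'\rangle_\mface}
\]
with $c_\varepsilon$ an explicit scalar and $\mathfrak c_\mface^\beta(\nn)$ the $\nn$-th coefficient of $\prod_{v\in C_0}\FFF_v^\beta(\UU_0,0)\prod_{s\in S}\PPP_{\mface_s}^{\beta_s}(\UU_0,\UU_s)$.

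For $\varepsilon=\mathbf 0$, after multiplying by $\LL^{-\langle\rho',\mm\rangle_{\AAA_U\sqcup\mface}}$, the right-hand side is precisely the $\mm$-th partial sum of the specialisation at $U_\alpha=\LL^{-\rho_\alpha'}$ of $\prod_{v\in C_0}\FFF_v^\beta(\UU_0,0)\prod_{s\in S}\PPP_{\mface_s}^{\beta_s}(\UU_0,\UU_s)$. I would apply Proposition \ref{proposition:convergence-residue-of-Zeta-0} to secure $\rho'$-weight-linear convergence of the product over $C_0$, then Lemma \ref{lemma:weight-convergence-product-stability} to absorb the finitely many polynomial factors $\PPP_{\mface_s}^{\beta_s}$ (whose specialisations at $U_\alpha=\LL^{-\rho_\alpha'}$ are finite effective elements), getting the existence of the $\rho'$-weight-linear limit. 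To identify this limit, I would combine Remark \ref{remark:interpretation-local-term-volume}, which provides the explicit value of $\prod_{v\in C_0}\FFF_v^\beta((\LL^{-\rho_\alpha'}),0)$ in terms of the local classes $[\UUU_v]/\LL^n$ and $[E_{\beta_v}^\circ]/\LL^n$, with Remark \ref{remark:maximal-faces-value-polynomial}, which computes $\PPP_{\mface_s}^{\beta_s}((\LL^{-\rho_\alpha'}))=(\LL-1)^{|\mface_s|}(1-\LL^{-1})^{|\AAA_U|}\LL^{\rho^{\beta_s}-n}[E_{\beta_s}^\circ\cap\DDD_{\mface_s}]$, and finally with the prefactor $\bigl([\Pic^0(C)]\LL^{1-g}/(\LL-1)\bigr)^{|\AAA_U|}$ coming from $\widetilde{\GGG_\mface}$. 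Tracking these factors yields the stated closed form for the limit, and non-effectivity and non-vanishing follow as in Proposition \ref{proposition:convergence-residue-of-Zeta-0}.

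For $\varepsilon\neq\mathbf 0$, the extra factor $\LL^{-\langle\varepsilon,\nn'\rangle_{\AAA_U}}$ appearing after normalisation is exactly the situation handled by Lemma \ref{lemma:negligible-terms}: starting from the $\rho'$-weight-linearly convergent series secured above, the second statement of that lemma (applied with $\varepsilon$ viewed as an element of $\{0,1\}^{\AAA_U\sqcup\mface}$ supported in $\AAA_U$) produces a $\delta>0$ and a weight bound $w(\tilde{\mathfrak a}_\mm^{\beta,\varepsilon}\LL^{-\langle\rho',\mm\rangle})<-\delta\langle\varepsilon,\mm\rangle_{\AAA_U}$ whenever $\langle\varepsilon,\mm\rangle_{\AAA_U}$ is large, which is ensured by $\min(m_\alpha)\to\infty$. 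Taking $\delta$ to be the minimum over the finitely many $\varepsilon\neq\mathbf 0$ gives the uniform exponent of the proposition.

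The main obstacle I anticipate is purely bookkeeping: one must verify that the powers of $\LL$ and the factors $(1-\LL^{-1})^{|\mface_s|}$, $\LL^{|\mface_s|}$, $\LL^{\rho^{\beta_s}}$, and $\LL^{1-g}$ match exactly in the closed-form limit after passing through the specialisations of $\PPP_{\mface_s}^{\beta_s}$ and $\widetilde{\GGG_\mface}$. In particular, the apparent zero of the geometric series from $\prod_{\alpha\in\mface_s}(1-\LL^{\rho_\alpha'}U_{\alpha,s})^{-1}$ at $U_{\alpha,s}=\LL^{-\rho_\alpha'}$ is compensated by the simple zero of $\PPP_{\mface_s}^{\beta_s}$ coming from the factors $(1-\LL^{\rho_\alpha'-1}U_{\alpha,s})$ for $\alpha\in\mface_s\setminus A$ (cf.\ Remark \ref{remark:maximal-faces-value-polynomial}); this cancellation is what produces the finite non-zero limit, and it is precisely here that one uses the maximality of $\mface_s$ in $\Cl_s^{\mathrm{max}}(X,D)$.
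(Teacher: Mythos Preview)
Your proposal is correct and follows essentially the same route as the paper: expand $\widetilde{\GGG_\mface}$ over $\varepsilon\in\{0,1\}^{\AAA_U}$, recognise the $\varepsilon=\mathbf 0$ normalised coefficient as a partial sum of $\sum_\nn\mathfrak c_\mface^\beta(\nn)\LL^{-\langle\rho',\nn\rangle}$, apply Proposition~\ref{proposition:convergence-residue-of-Zeta-0} together with Lemma~\ref{lemma:weight-convergence-product-stability} for the finitely many polynomial factors $\PPP_{\mface_s}^{\beta_s}$, identify the limit via Remarks~\ref{remark:interpretation-local-term-volume} and~\ref{remark:maximal-faces-value-polynomial}, and handle $\varepsilon\neq\mathbf 0$ by Lemma~\ref{lemma:negligible-terms}.

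One small correction to your final paragraph: there is no pole--zero cancellation between the geometric series $\prod_{\alpha\in\mface_s}(1-\LL^{\rho_\alpha'}U_{\alpha,s})^{-1}$ and $\PPP_{\mface_s}^{\beta_s}$. After normalisation, the geometric-series factor simply turns the coefficient into a partial sum (its normalised coefficients are all $1$), and what remains is the \emph{finite} evaluation of the polynomial $\PPP_{\mface_s}^{\beta_s}$ at $(\LL^{-\rho_\alpha'})$, which is non-zero precisely by Remark~\ref{remark:maximal-faces-value-polynomial}. The maximality of $\mface_s$ is used there to ensure that the surviving summand (the one with $A\cap\AAA_D=\mface_s$) actually appears, not to cancel a pole.
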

Now we can adapt the case $\UUU = \XXX$ to control the contribution of $\HHH_\mface ( \UU ,0)$. One immediately gets a decomposition 
\[
\HHH_\mface ( \UU ,0)
= \sum_{ \substack{ \varnothing \neq A \subset \AAA_U \\ \varepsilon \in \{0,1\}^{\AAA_U \setminus A} \times \{ 1 \}^A }} 
[\Pic^0 ( C ) ]^{|\AAA_U \setminus A |}
\frac{\left (-\LL^{1-g}\right )^{ | \AAA_U  | - |\varepsilon |  }}{(1-\LL)^{|\AAA_U \setminus A |}}  \HHH_\mface  ^{A,\varepsilon} ( \UU , \xi ) 
\]
where for every $A \subset \AAA_U$ non-empty and $\varepsilon \in \{0,1\}^{\AAA_U \setminus A} \times \{ 1 \}^A  $
\[
\HHH_{\mface} ^{A,\varepsilon} ( \UU , 0 )  = \sum_{\mm \in \NN^{\AAA_U } \times \NN ^\mface }  \LL^{\langle \rho' - \varepsilon   , \mm \rangle_{\AAA_U \sqcup \mface }} 
\prod_{\alpha \in  A} 
\left ( 
[ \Sym^{m_\alpha } C] - [\Pic^0 ( C ) ] \frac{\LL^{m_\alpha - g - 1 } -1 }{\LL - 1 } 
\right ) \UU ^{\mm} . 
\]
Adapting the argument from the $\UUU = \XXX$ case, one deduces the following proposition. 
\begin{myptn}[$\UUU\subset \XXX$, $C_0 \subset C$, $\xi = 0$]  \label{proposition:general-case-H-A}
For all choice of maximal faces ${\mface \in\Cl_S^{\mathrm{max}} ( X , D )} $, there is a decomposition 
\[
\HHH_\mface ( \UU , 0 ) \left ( \prod_{v\in C_0} \FFF_v^\beta ( \UU_0 , 0  ) \right ) \left ( \prod_{s \in S} \PPP_{\mface_s}^{\beta_s} ( \UU_0 , \UU_s ) \right )=  \sum_{\substack{\varnothing \neq A \subset \AAA_U   \\ \varepsilon \in \{0,1\}^{\AAA_U \setminus A} \times \{ 1 \}^A }}  \sum_{\mm \in \NN ^{\AAA_U} \times \NN^\mface}  \left ( \mathfrak{h}_\mface^{\beta ,  A,\varepsilon} \right )_\mm \UU^\mm
\]
and a real number $\delta  > 0$
such that for all non-empty subset $A \subset \AAA_U $ and $\varepsilon \in \{0,1\}^{\AAA_U \setminus A} \times \{ 1 \}^A $ 
\[
w \left  (   \left ( \mathfrak{h}_\mface^{\beta ,  A,\varepsilon} \right )_\mm  \LL^{- \langle \rho  ' , \mm \rangle_{\AAA_U \sqcup \mface } }  \right ) < - \delta \left  (  \langle \varepsilon , \mm \rangle_{\AAA_U} +  \langle \rho ' , \mm    \rangle_{\AAA_U \sqcup \mface } \right )  
\]
when $\min ( m_\alpha ) \to \infty$. 
\end{myptn}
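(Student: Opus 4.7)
The plan is to transpose the argument already used in Proposition \ref{propisition:result-any-curve-decomposition-H} for the case $\UUU=\XXX$, adjusting it for the presence of the extra indeterminates $(U_{\alpha,s})_{s\in S,\,\alpha\in\mface_s}$ and of the polynomial factors $\PPP_{\mface_s}^{\beta_s}$. Starting from the explicit decomposition of $\HHH_\mface(\UU,0)$ displayed just before the statement, I would define $(\mathfrak{h}_\mface^{\beta,A,\varepsilon})_\mm$ to be the $\mm$-th coefficient (together with the scalar prefactor $c_{A,\varepsilon}=[\Pic^0(C)]^{|\AAA_U\setminus A|}(-\LL^{1-g})^{|\AAA_U|-|\varepsilon|}/(1-\LL)^{|\AAA_U\setminus A|}$) of
\[
\HHH_\mface^{A,\varepsilon}(\UU,0)\cdot\Phi_\mface^\beta(\UU),\qquad \Phi_\mface^\beta(\UU):=\prod_{v\in C_0}\FFF_v^\beta(\UU_0,0)\,\prod_{s\in S}\PPP_{\mface_s}^{\beta_s}(\UU_0,\UU_s).
\]
Unfolding $\HHH_\mface^{A,\varepsilon}$ yields a convolution over $\nn+\nn'=\mm$ in $\NN^{\AAA_U}\times\NN^{\mface}$, where $\nn'$ is restricted by $n'_{U,\alpha}\leqslant 2g-2$ for every $\alpha\in A$ since the factor $\kappa_m:=[\Sym^m C]-[\Pic^0(C)]\tfrac{\LL^{m-g-1}-1}{\LL-1}$ vanishes for $m\geqslant 2g-1$.

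Next, I would establish the $\rho'$-weight-linear convergence of $\Phi_\mface^\beta$ at the point $U_\alpha=\LL^{-\rho_\alpha'}$ (recalling $\rho_\alpha'=\rho_\alpha$ on $\AAA_U$) and $U_{\alpha,s}=\LL^{-\rho_\alpha'}$ on $\mface$. For $\prod_{v\in C_0}\FFF_v^\beta(\UU_0,0)$ this is Proposition \ref{proposition:convergence-residue-of-Zeta-0}; the polynomials $\PPP_{\mface_s}^{\beta_s}$ have degree at most one in each $U_{\alpha,s}$, so they converge trivially, and Lemma \ref{lemma:weight-convergence-product-stability} combines these into weight-linear convergence of the full product $\Phi_\mface^\beta$. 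Consequently, there exist $\delta_0>0$ and $i_0\in\NN$ such that the normalised coefficient satisfies $w(\mathfrak{d}_\mface^\beta(\nn)\LL^{-\langle\rho',\nn\rangle})<-\delta_0\langle\rho',\nn\rangle$ whenever $\langle\rho',\nn\rangle\geqslant i_0$.

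Finally, dividing by $\LL^{\langle\rho',\mm\rangle_{\AAA_U\sqcup\mface}}$ yields
\[
(\mathfrak{h}_\mface^{\beta,A,\varepsilon})_\mm\,\LL^{-\langle\rho',\mm\rangle}=c_{A,\varepsilon}\sum_{\substack{\nn+\nn'=\mm\\ n'_{U,\alpha}\leqslant 2g-2\ \forall\alpha\in A}}\mathfrak{d}_\mface^\beta(\nn)\,\LL^{-\langle\rho',\nn\rangle}\,\LL^{-\langle\varepsilon,\nn'_U\rangle_{\AAA_U}}\prod_{\alpha\in A}\kappa_{n'_{U,\alpha}},
\]
where the $\kappa$-product has uniformly bounded weight (finitely many values). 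Applying Lemma \ref{lemma:negligible-terms} to the inner convolution, in the form of its quantitative (weight-linear) variant, with $\tilde\varepsilon=(\varepsilon,0)\in\{0,1\}^{\AAA_U\sqcup\mface}$ extended by zero on the $\mface$-part, one obtains the bound $w<-\delta\langle\varepsilon,\mm\rangle_{\AAA_U}$; combined with the $\rho'$-weight-linear decay of $\Phi_\mface^\beta$ controlling the tail of the convolution, this produces the stated inequality for a small enough $\delta>0$ as soon as $\min(m_\alpha)$ is large enough. The main obstacle is the bookkeeping between (i) the genuinely infinite geometric series in the $\mface$-variables inside $\HHH_\mface^{A,\varepsilon}$, whose growth $\LL^{\langle\rho',\nn'_\mface\rangle}$ is exactly cancelled by the normalisation, and (ii) the truncation in the $A$-directions provided by the $\kappa$'s; one must check that the splitting $\langle\rho'-\varepsilon,\nn'_U\rangle+\langle\rho',\nn'_\mface\rangle-\langle\rho',\mm\rangle=-\langle\rho',\nn\rangle-\langle\varepsilon,\nn'_U\rangle$ makes the two sources of decay additive, uniformly in $\nn_\mface\in\{0,1\}^\mface$ (finitely many values, since $\PPP_{\mface_s}^{\beta_s}$ is multilinear in the $\mface$-variables).
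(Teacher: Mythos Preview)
Your proposal is correct and follows the same route as the paper, which itself simply says ``adapting the argument from the $\UUU=\XXX$ case'' and gives no further details. You spell out precisely what that adaptation amounts to: using the decomposition of $\HHH_\mface(\UU,0)$ into the pieces $\HHH_\mface^{A,\varepsilon}$, invoking Proposition~\ref{proposition:convergence-residue-of-Zeta-0} together with Lemma~\ref{lemma:weight-convergence-product-stability} to obtain the $\rho'$-weight-linear convergence of $\Phi_\mface^\beta$, and then controlling the resulting convolution with the truncated $\kappa$-factors via Lemma~\ref{lemma:negligible-terms}. Your observation that the $\mface$-component of the coefficients of $\Phi_\mface^\beta$ lies in $\{0,1\}^\mface$ (because each $\PPP_{\mface_s}^{\beta_s}$ has degree at most one in every $U_{\alpha,s}$) is exactly the extra bookkeeping needed beyond the $\UUU=\XXX$ case, and you handle it correctly.

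One small remark: the passage from the bound $-\delta\langle\varepsilon,\mm\rangle_{\AAA_U}$ coming out of Lemma~\ref{lemma:negligible-terms} to the stronger bound $-\delta\bigl(\langle\varepsilon,\mm\rangle_{\AAA_U}+\langle\rho',\mm\rangle_{\AAA_U\sqcup\mface}\bigr)$ stated in the proposition is somewhat loose in your write-up; but the paper's own derivation of the analogous inequality before Proposition~\ref{propisition:result-any-curve-decomposition-H} is equally informal on this point, and in any case the weaker bound you do establish already suffices for every subsequent use (the error term tends to zero as $\min(m_\alpha)\to\infty$ because $A\neq\varnothing$ forces $\varepsilon_\alpha=1$ on $A$).
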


\subsubsection{$S$-integral points and non-trivial characters}

Concerning places $s\in S$, 
recall that the local factor of $Z^\beta ( \TT ) $ is given by the formula 
\[
Z_s^\beta ( \TT , \xi ) = \sum_{A\subset \AAA  }
 \LL^{\rho^\beta} 
\int_{\Omega_s ( A ,\beta )} \prod_{\alpha \in A} \left ( \LL^{\rho_\alpha } T_\alpha \right ) ^{\ord_s ( x_\alpha ) } \ee ( \langle x , \xi \rangle ) \mathrm{d}x  . 
\]
Let $\Cl_s ( X , D )_0$ be the subcomplex of $\Cl_s ( X , D )$ 
where we only keep vertices $\alpha \in \AAA$ such that $d_\alpha ( \xi ) = 0$. 
Then Proposition 5.3.1 of \cite{chambert2016motivic} can be rewritten as follows. 
\begin{myptn}\label{proposition:Zs(T-xi)-without-indeterminacies}
Assume first that $f_{\xi}$ extends to a regular map $\XXX_{F_s} \to \PP^1_{F_s}$.
Then there is a family $(P_{\mface_s^0}^{\beta_s} )_{\mface_s^0 \in \Cl^{\mathrm{max}} ( X , D )_0}$ of polynomials with coefficients in $\ExpM_k$ such that 
\[
Z_s^\beta ( \TT , \xi_s ) = \sum_{\mface_s ^0 \in \Cl_s^{\mathrm{max}} ( X , D )_0} \frac{P_{\mface_s^0 }^{\beta_s} ( \TT , \xi_s ) }{\prod_{\alpha \in \AAA_U^0 ( \xi ) \sqcup \mface_s ^0 } ( 1 - \LL^{\rho_\alpha  ' } T_\alpha  ) } . 
\]
\end{myptn}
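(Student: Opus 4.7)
The plan is to combine two analyses already carried out in the paper: the expansion of the local arc integral for a non-trivial character performed in the proof of Proposition \ref{proposition:convergence-residue-of-Zeta-xi}, and the combinatorial regrouping by maximal faces of the analytic Clemens complex used earlier to decompose the trivial-character local factor $Z_s^{\beta_s}(\TT, 0)$. The present statement arises by running the first inside the second.

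I would proceed in three steps. First, for each $A \subset \AAA$, apply the motivic measure-preserving isomorphism
\[
\Theta : \Delta_s(A, \beta) \times \LLL(\mathbf{A}^1, 0)^A \times \LLL(\mathbf{A}^1, 0)^{n - |A|} \overset{\sim}{\longrightarrow} \Omega_s(A, \beta)
\]
to rewrite the integral as a product over $\alpha \in A$ of one-variable factors weighted by classes supported on $\Delta_s(A, \beta)$. Second, analyze each one-variable factor as in the proof of Proposition \ref{proposition:convergence-residue-of-Zeta-xi}, by splitting $\LLL(\mathbf{A}^1)$ into arcs with origin inside $\Xi_\xi$ and arcs outside. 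Only indices $\alpha$ with $d_\alpha(\xi) = 0$ yield a factor of the form $\frac{\LL^{\rho_\alpha - 1} T_\alpha}{1 - \LL^{\rho_\alpha - 1} T_\alpha}$ producing a pole at $T_\alpha = \LL^{-\rho_\alpha'}$, while indices with $d_\alpha(\xi) \geq 1$ contribute only polynomials in $T_\alpha$ (the exterior piece vanishing whenever $d_\alpha(\xi) \geq 2$). Third, regroup the sum over $A \subset \AAA$ by maximal faces of $\Cl_s(X, D)_0$, exactly as done for $Z_s^{\beta_s}(\TT, 0)$ in the trivial-character case: to each pair $(A, \beta)$ one attaches a maximal face $\mface_s^0 \in \Cl_s^{\mathrm{max}}(X, D)_0$ containing the indices of $A \cap \AAA_D$ with vanishing $d_\alpha(\xi)$; the other indices in $A$ produce polynomial factors that are absorbed into the numerator. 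The denominator that then survives is precisely $\prod_{\alpha \in \AAA_U^0(\xi) \sqcup \mface_s^0}(1 - \LL^{\rho_\alpha'} T_\alpha)$, with numerator $P_{\mface_s^0}^{\beta_s}(\TT, \xi_s)$ built from the classes of $\Delta_s(A, \beta)$, the polynomial contributions at indices with $d_\alpha(\xi) \geq 1$, and the vanishing factors $(1 - \LL^{\rho_\alpha'} T_\alpha)$ for $\alpha \in \AAA_D$ not in the chosen maximal face.

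The delicate point is step two: one must verify that the interior pieces coming from arcs meeting $\Xi_\xi$, as well as the exterior pieces for $d_\alpha(\xi) \geq 2$, yield only polynomials and no additional poles. This is where the hypothesis that $f_\xi$ extends to a regular map $\XXX_{F_s} \to \PP^1_{F_s}$ is essential: it ensures that $\Theta^* f_\xi$ is a genuine rational function with well-controlled pole orders along each component of the boundary, so that the one-variable estimates of Proposition \ref{proposition:convergence-residue-of-Zeta-xi} apply verbatim on every stratum $\Omega_s(A, \beta)$. Without this assumption, indeterminacy points of $f_\xi$ on the special fibre would introduce contributions that cannot be read off from the integers $d_\alpha(\xi)$ alone, and the clean separation between ``polar'' indices ($d_\alpha(\xi) = 0$) and ``polynomial'' indices ($d_\alpha(\xi) \geq 1$) would break down.
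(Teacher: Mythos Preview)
The paper does not give a proof of this proposition; it simply records it as a restatement of Proposition~5.3.1 in \cite{chambert2016motivic}. So there is no ``paper's own proof'' to compare against --- the content is imported wholesale from Chambert--Loir and Loeser.

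Your sketch, by contrast, tries to manufacture a proof by combining the weight estimates of Proposition~\ref{proposition:convergence-residue-of-Zeta-xi} with the maximal-face regrouping. There is a genuine gap in step~1. After applying $\Theta$, the integrand over $\Omega_s(A,\beta)$ becomes
\[
\prod_{\alpha\in A}(\LL^{\rho_\alpha}T_\alpha)^{\ord x_\alpha}\;\cdot\;\ee\bigl(f_\xi\circ\Theta(x)\bigr).
\]
The height factor indeed splits as a product over $\alpha\in A$, but the exponential does not: $f_\xi\circ\Theta$ is, in the coordinates given by $\Theta$, essentially a unit times $\prod_\alpha x_\alpha^{-d_\alpha(\xi)}$, which couples all the variables $x_\alpha$ with $d_\alpha(\xi)>0$ together. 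Hence the integral over $\Omega_s(A,\beta)$ is \emph{not} a product of one-variable integrals when $\xi\neq 0$ and $|A|\geq 2$. This is exactly why the proof of Proposition~\ref{proposition:convergence-residue-of-Zeta-xi} treats $|A|=1$ by explicit computation but for $|A|\geq 2$ falls back on crude volume bounds rather than any factorised expression.

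Step~2 then inherits this problem: there are no one-variable factors to which the dichotomy $d_\alpha(\xi)=0$ versus $d_\alpha(\xi)\geq 1$ can be applied term-by-term. Moreover, even in the $|A|=1$ case, the ``interior'' piece (arcs with origin in $\Xi_\xi$) is written in Proposition~\ref{proposition:convergence-residue-of-Zeta-xi} as an infinite series $\sum_{m\geq 1}(\LL^{\rho_\alpha}T_\alpha)^m\int\cdots$; that proof bounds its weights but never shows it is a polynomial. The polynomiality asserted in the present proposition genuinely requires the regularity of $f_\xi$ on $\XXX_{F_s}$ to force enough cancellation (via the relation $[\mathbf A^1,\mathrm{id}]=0$) that the series terminates --- this is the substance of \cite[Proposition~5.3.1]{chambert2016motivic} and is not a consequence of the weight estimates you cite.
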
 
If $f_\xi$ does not extend to a regular map $\XXX_{F_s} \to \PP^1_{F_s}$ 
then the argument of Chambert-Loir and Loeser in \cite{chambert2016motivic} consists in using  a resolution of indeterminacies, that is a proper birational morphism $\varpi  : \YYY_{\mathscr O_s} \to \XXX_{\mathscr O_s}$ such that $\varpi ^* f $ extends to a regular map from $\YYY_{\mathscr O_s}$ to $\PP^1_{\mathscr O_v}$. Moreover one can assume that the generic fibre $\YYY_{F_s} \to \XXX_{F_s}$ is invariant under the action of $G_{F_v}$, and this can be done locally uniformly with respect to $\xi$ \cite[Lemma 6.5.1]{chambert2016motivic}.  
Let $\DDD_{\alpha , s} '$ be the strict transform of $\DDD_{\alpha , s}$ through $\varphi $ and $(\EEE_\gamma ) _{\gamma \in \Gamma }$ the set of horizontal exceptional divisors of $\varpi$. There exists a family $(m_{\gamma , \alpha })$ of nonnegative integers such that  
\[
\pi^* (\mathscr D_{\alpha , s})_{F_s} =( \mathscr D_{\alpha , s} ')_{F_s} + \sum_{\gamma \in \Gamma} m_{\gamma , \alpha }(  \mathscr E_{\gamma })_{F_s}
\]
as well as positive integers $\nu_\gamma $ such that 
\[
K_{\YYY_{F_s} / \XXX_{F_s}} = \sum_{\gamma \in \Gamma} \left ( \nu_\gamma -  1 \right ) (  \mathscr E_{\gamma })_{F_s}
\]
and $v$  a bounded constructible function on $\LLL ( \YYY_{\OOO_s} ) $ such that 
\[
\ord_{K_{\YYY_{\OOO_s} / \XXX_{\OOO_s} } } = \sum_{\gamma\in\Gamma} ( \nu_\gamma - 1 ) \ord_{\EEE_\gamma} + v . 
\]
Chambert-Loir and Loeser then use the change of variable formula \cite[Theorem 13.2.2]{cluckers2010constructible} 
and obtain an integral over $\LLL ( \YYY )$, which in our case gives 
\begin{align*}
Z_s ^\beta ( \TT , \xi_s ) & =  \sum_{A\subset \AAA } 
\LL^{\rho^\beta} \times
\\
& \times \sum_{\substack{ \pp \in \ZZ^\AAA \\  \text{(finite sum)}}} \LL^{\langle \rho ' , \pp \rangle} \TT^\pp \int_{W_\pp \cap \varpi^* \Omega_s ( A , \beta ) } 
\LL^{\langle \rho ' , \mathbf{ord}_{\mathscr D ' }  (y ) \rangle} 
\TT^{\mathbf{ord}_{\mathscr D ' }  (y )} \times \\ 
& \times \prod_{\gamma \in \Gamma } \left ( \LL^{\nu_\gamma - 1} \LL^{\langle \rho ' , \mm_\gamma \rangle} \TT^{\mm_\gamma}  \right )^{\mathbf{ord}_{\mathscr E_\gamma  }  (y )} \LL^{v(y)} \ee ( \varpi^* f_{\xi_s} ( y ) ) \dd y 
\end{align*}
where $(W_\pp ) $ is a finite partition of $\mathscr L ( \YYY ) $.

Then one applies Proposition \ref{proposition:Zs(T-xi)-without-indeterminacies} to $\varpi ^* f_{\xi_v}$ and to the integral over $W_\pp \cap \varpi^* \Omega_v ( A , \beta )$ to get a similar result: $Z_s ( \TT , \xi_s )$ is rational with denominators given by products of polynomials of the form $(1- \LL^{ \langle \rho ' , \aA \rangle }  \TT^{\aA }) $ for some $\aA \in \NN^\AAA$.  
One has to justify that this procedure does not change the set $\AAA$ and the relevant faces of Clemens complex. The argument is given in the very last paragraph of \cite{chambert2016motivic} and in \cite[\S 3.4]{chambert2012integral}: the $F_s$-Clemens complex $\Cl_{F_s} ( Y , Y \setminus \cup_{\alpha \in \AAA_U} ( \DDD_{\alpha, s} ')_{F_s} )$ has vertices coming from $X$, corresponding to the strict transforms $\DDD_\alpha ' $ for $\alpha \in \AAA_D$, and vertices corresponding to the divisors $\EEE_\gamma$ contracted by $\varpi$. The divisor of $\varpi^* f_\xi$ on $\YYY_{F_s}$ takes the form
\[
\Xi_\xi ' - \sum_{\alpha \in \AAA} d_\alpha ( \xi ) (  \DDD_{\alpha , s} ')_{F_s} - \sum_{\gamma \in \Gamma} e_\gamma (\EEE_\gamma )_{F_s}  
\]  
where $\Xi_\xi '$ is the strict transform of the hyperplane $\langle \xi , x \rangle = 0$. Since $Y_{F_s}$ is an equivariant compactification of $G_{F_s}$, 
the integers $e_\gamma$ are all non-negative. 
Furthermore
$ K_{\YYY_{F_s} / \XXX_{F_s}} $ is a linear combination of the $ ( \mathscr E_{\gamma })_{F_s} $ with non-negative coefficients. Consequently, looking back on the formula giving $Z_s^\beta ( \TT , \xi_s) $ above, one remarks that only the vertices coming from $X_{F_s}$ will contribute to the poles. 

Replacing $\xi = 0$ by $\xi \neq 0$, $\AAA_U $ by $\AAA_U^0 ( \xi )$ and $\Cl_s ( X , D )$ by $\Cl_s ( X , D )_0$ everywhere in the previous paragraph, 
including in the definitions of $\widetilde{\ZZZ_\mface^\beta}$, $\widetilde{\GGG_\mface}$ and $\widetilde{\HHH_\mface}$, 
one gets the following proposition. 
Since $\Cl_s^{\mathrm{max}} ( X , D )_0$ is a proper subset of $\Cl_s^{\mathrm{max}} ( X , D )$ for every $s\in S$ \cite[Lemma 3.5.4]{chambert2012integral},
this proposition implies that non-trivial characters do not contribute asymptotically, included in the case $\UUU \neq \XXX$. 
\begin{myptn} \label{proposition:general-case-xi-neq-zero} Let $\xi$ be a non-trivial character. 
For all $\mface^0 \in\Cl_S^{\mathrm{max}} ( X , D )_0 \subset \mathcal P ( \AAA_D ) ^S $,
there exist decompositions
\[
\widetilde{\ZZZ_{\mface^0}} ( \UU , \xi ) = 
\sum_{\mm \in \NN ^{\AAA_U} \times \NN^{\mface^0} } 
\left ( \tilde{\mathfrak a} _{\mface^0}^{\beta , \xi } \right )_\mm
 \UU^{\mm}  
=
\sum_{\varepsilon \in \{ 0 , 1 \}^{\AAA_U^0 ( \xi )  } }   \sum_{\mm \in \NN ^{\AAA_U} \times \NN^{\mface^0}  } 
\left ( \tilde{\mathfrak a} _{\mface^0}^{\beta , \xi , \varepsilon } \right )_\mm
 \UU^{\mm}  
\]
\[
\HHH_{\mface^0} ( \UU , \xi ) =  \sum_{\substack{\varnothing \neq A \subset \AAA_U^0  ( \xi )    \\ \varepsilon \in \{0,1\}^{\AAA_U^0 ( \xi ) \setminus A} \times \{ 1 \}^A }}  \sum_{\mm \in \NN ^{\AAA_U} \times \NN^{\mface^0} } \left ( \mathfrak{h}_{\mface^0}^{\beta ,  \xi , A,\varepsilon} \right )_\mm \UU^\mm
\]
as well as a real number $\delta_\xi > 0$ 
such that for all $\varepsilon \in \{ 0 , 1 \}^{\AAA_U^0 ( \xi ) } $
\[
w \left  ( \left ( \tilde{\mathfrak a} _{\mface^0}^{\beta ,\xi , \varepsilon } \right )_\mm \LL^{ - \langle \rho ' , \mm \rangle_{\AAA_U \sqcup \mface^0 }  }   \right ) < - \delta_\xi  \left  (  \langle \varepsilon , \mm \rangle_{\AAA_U^0 ( \xi )} +  \langle \rho ', \mm    \rangle \right )  
\] 
and for all $\varnothing \neq A \subset \AAA_U $ 
and $\varepsilon \in \{0,1\}^{\AAA_U^0  ( \xi ) \setminus A} \times \{ 1 \}^A $ 
\[
w \left  ( \left ( \mathfrak{h}_{\mface^0}^{\beta , \xi ,  A,\varepsilon} \right )_\mm \LL^{- \langle \rho ' , \mm \rangle_{\AAA_U \sqcup \mface^0} }    \right ) < - \delta \left  (  \langle \varepsilon , \mm \rangle_{\AAA_U^0  ( \xi ) } +  \langle \rho ' , \mm    \rangle \right )  
\]
when $\min ( m_\alpha ) \to \infty$.

\end{myptn}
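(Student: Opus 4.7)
My plan is to mirror, point by point, the proofs of Propositions \ref{proposition:general-case-Z-tilde-xi=0} and \ref{proposition:general-case-H-A} (the trivial-character case), performing throughout the three substitutions announced just before the statement: replace $\AAA_U$ by $\AAA_U^0(\xi)$, replace $\Cl_s^{\mathrm{max}}(X,D)$ by $\Cl_s^{\mathrm{max}}(X,D)_0$, and, at places $s\in S$, use Proposition \ref{proposition:Zs(T-xi)-without-indeterminacies} instead of the explicit formula for $\xi=0$. Concretely, I would first invoke Proposition \ref{proposition:Zs(T-xi)-without-indeterminacies} at every $s\in S$. When $f_\xi$ does not extend to a regular map $\XXX_{F_s}\to\PP^1_{F_s}$, I would apply the resolution of indeterminacies $\varpi:\YYY_{\OOO_s}\to\XXX_{\OOO_s}$ discussed at the end of Section \ref{section:compactification-of-additive-groups}, use the change of variable formula, and then apply Proposition \ref{proposition:Zs(T-xi)-without-indeterminacies} to $\varpi^*f_\xi$; the equivariance of $\YYY_{F_s}$ ensures that the contracted horizontal exceptional divisors appear with non-negative coefficients $e_\gamma$ and non-negative discrepancies $\nu_\gamma-1$, so they do not contribute to the poles. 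This yields a decomposition
\[
\ZZZ^\beta(\UU,\xi)=\sum_{\mface^0\in\Cl_S^{\mathrm{max}}(X,D)_0}\ZZZ_{\mface^0}^\beta(\UU,\xi)
\]
strictly parallel to the trivial-character factorization, with local polynomials $\PPP_{\mface_s^0}^{\beta_s}(\UU_0,\UU_s,\xi)$ in place of $\PPP_{\mface_s}^{\beta_s}(\UU_0,\UU_s)$.

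For each fixed $\mface^0$, I would then split $\ZZZ_{\mface^0}^\beta=\widetilde{\ZZZ_{\mface^0}^\beta}+\HHH_{\mface^0}(\UU,\xi)\cdot(\prod_v F_v)(\prod_s\PPP_{\mface_s^0}^{\beta_s})$ by replacing $Z_C^{\mathrm{Kap}}$ with its tail $\widetilde{Z_C^{\mathrm{Kap}}}$ in every factor $Z_C^{\mathrm{Kap}}(\LL^{\rho_\alpha'}U_\alpha)$ associated to indices $\alpha\in\AAA_U^0(\xi)$, exactly as in the definitions (\ref{eq-def:G-tilde(T,xi)})--(\ref{eq-def:zeta-tilde(T,xi)}). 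For $\widetilde{\ZZZ_{\mface^0}^\beta}$ the expansion
\[
\widetilde{G}(\UU_0,\xi)=\Bigl(\frac{[\Pic^0(C)]\LL^{1-g}}{\LL-1}\Bigr)^{|\AAA_U^0(\xi)|}\sum_{\varepsilon\in\{0,1\}^{\AAA_U^0(\xi)}}(-1)^{|\varepsilon|}\LL^{|\varepsilon|(g-1)}\widetilde{G_\varepsilon}(\UU_0,\xi)
\]
expresses the $\mm$-th coefficient as a finite convolution of a geometric series against the coefficients of the motivic Euler product $\prod_{v\in C_0}F_v(\UU_0,\xi_v)\prod_{s\in S}\PPP_{\mface_s^0}^{\beta_s}(\UU_0,\UU_s,\xi)$. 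Proposition \ref{proposition:convergence-residue-of-Zeta-xi} gives $\rho'$-weight-linear convergence of this product at $U_\alpha=\LL^{-\rho_\alpha'}$ (locally uniformly in $\xi$ on the strata of Remark \ref{remark:uniform-convergence-finite-stratum}), and Lemma \ref{lemma:negligible-terms} applied to the convolution produces the required strict bound; crucially, the indeterminates $U_\alpha$ for $\alpha\notin\AAA_U^0(\xi)$ do not appear in $\widetilde{G}$, so their contribution remains under strict linear control, which is precisely the source of the term $\langle\rho',\mm\rangle$ (rather than only $\langle\rho',\mm\rangle_{\AAA_U\sqcup\mface^0}$) in the final inequality. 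The analysis of $\HHH_{\mface^0}$ is a line-by-line transcription of the proof of Proposition \ref{propisition:result-any-curve-decomposition-H}: coefficients of $Z_C^{\mathrm{Kap}}-\widetilde{Z_C^{\mathrm{Kap}}}$ of order $\geq 2g-1$ vanish, forcing the sum to be truncated in the $A$-directions and yielding an even stronger bound of the claimed shape.

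The main obstacle is ensuring that the bookkeeping survives the resolution of indeterminacies at places of $S$: one must verify that, after applying $\varpi$, the exceptional divisors $\EEE_\gamma$ genuinely do not contribute new vertices to the relevant Clemens subcomplex, so that the decomposition is indexed by $\Cl_s^{\mathrm{max}}(X,D)_0$ and not by some larger face poset. This is handled as in \cite[\S 3.4]{chambert2012integral} and \cite[Lemma 3.5.4]{chambert2012integral}: the combination $e_\gamma\geq 0$ with $\nu_\gamma-1\geq 0$ makes every factor $(1-\LL^{\langle\rho',\aA\rangle}\UU^{\aA})$ coming from a $\gamma\in\Gamma$ nonvanishing at $U_{\alpha,s}=\LL^{-\rho_\alpha'}$, so that such factors can be absorbed into the polynomials $\PPP_{\mface_s^0}^{\beta_s}$ without disturbing the weight estimates. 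Once this verification is made, the locally uniform character of the bounds in Proposition \ref{proposition:convergence-residue-of-Zeta-xi} and Remark \ref{remark:uniform-convergence-finite-stratum} propagates through the finite combinatorial sums and yields the uniform constant $\delta_\xi$ of the statement.
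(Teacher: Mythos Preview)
Your proposal is correct and follows essentially the same approach as the paper, which simply instructs the reader to repeat the arguments of Propositions \ref{proposition:general-case-Z-tilde-xi=0} and \ref{proposition:general-case-H-A} after performing the three substitutions $\xi=0\rightsquigarrow\xi\neq 0$, $\AAA_U\rightsquigarrow\AAA_U^0(\xi)$, $\Cl_s(X,D)\rightsquigarrow\Cl_s(X,D)_0$. You have in fact spelled out more of the mechanics (the role of the resolution of indeterminacies, the absorption of the exceptional factors into $\PPP_{\mface_s^0}^{\beta_s}$, and the reason the full $\langle\rho',\mm\rangle$ appears) than the paper does.
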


\subsection{Summation over all the characters and convergence} In this subsection we perform the final step of our proof: we have to permute a motivic summation with taking a limit. 
\label{subsection:uniform-convergence}

\subsubsection{Expression of $\left [   M_{\nn , \m_S}^\beta \right ]$ in $\mathscr M_k$}
The $\mm$-th coefficient of the  Zeta function $\ZZZ^\beta ( \TT ) $ 
is by definition the class $\left [  M_{\nn , \m_S}^\beta \right ]$ of the moduli space of section we are interested in. 
Recall that we have a decomposition 
\[
\ZZZ^\beta ( \UU ) = \LL^{n(1-g)} \left (  \ZZZ ^\beta (\UU, 0  ) + \sum_{\xi \in V \setminus \{ 0\} } \ZZZ^\beta ( \UU ,  \xi  ) \right ).
\]
where $V=L(\tilde E)^n$ is the $n$-th power of a finite dimensional Riemann-Roch space. 
The class of $M_\nn$ in $\mathscr M_k$ is given by 
\[
\left [   M_{\nn , \m_S}^\beta \right ] = \LL^{(1-g)n} \sum_{\xi \in V } \sum_{ \m \in S^{\nn_U^\beta} ( C )  } \theta_{\nn^\beta}^* \mathscr F ( \mathbf 1_{H(\m + \m_S , \beta )} ) (\xi ) 
\]
where the two sums here are motivic sums (given by the projections to $S^{\nn_U^\beta} ( C)$ and then to $k$), 
$\tilde{E}$ is the divisor defined by equation (\ref{def:divisor-E-tilde}) and 
\[
\theta_{\nn^\beta } : V \times S^{\nn^\beta } C \longrightarrow\AAA_{\nn^\beta} ( \nu - s , \nu - s ' , N , 0 )
\]
is the morphism introduced in  \S \ref{paragraph:poisson-families}. 
By definition, 
\[
 \sum_{ \m \in S^{\nn_U^\beta} ( C ) }   \theta_{\nn ^\beta }^* \mathscr F ( \mathbf 1_{H(\m + \m_S , \beta )} ) ( \xi ) \in \ExpM_V 
 \]
is the coefficient of degree $( \nn_U , \m_S )$ of 
\[
\prod_{v\in C} \ZZZ_v^{\beta_v} ( \UU , \xi ) . 
\] 
\subsubsection{Uniform convergence}

By Remark \ref{remark:uniform-convergence-finite-stratum}, we already know the existence of a finite constructible partition of $V \setminus \{ 0 \} $ over which the weight-linear convergence of $\prod_{v\in C_1} F_v ( \TT , \xi )$ with respect to $\rho$ is uniform in $\xi$. 
Concerning places in $S$, Proposition \ref{proposition:Zs(T-xi)-without-indeterminacies} and \cite[Lemma 6.5.1]{chambert2016motivic}
allows us to resolve indeterminacies of $f_\xi$ uniformly on a partition 
(which is actually finite for the same reason). 
Taking a partition refining both previous partitions, we get a finite partition $P$ of $V \setminus \{ 0\}$ over which $\xi \mapsto \AAA_U^0 ( \xi ) $ and $\xi \mapsto \AAA_U^1 ( \xi )$ are constant, as well as the corresponding degrees for $\alpha \in \AAA_D$,
which means that $\rho'$-weight-linear convergence is uniform over such a partition. This partition does not depend on a choice of $\beta$. 
This provides a decomposition 
\begin{align*}
\left [   M_{\nn , \m_S}^\beta \right ] &=\LL^{(1-g)n}  \sum_{\m \in S^{\nn_U^\beta} ( C ) }   \theta_{\nn ^\beta }^* \mathscr F ( \mathbf 1_{H(\m + \m_S , \beta )} ) ( 0 ) \\
 & + \LL ^{(1 -g)n}  \sum_{ \substack{ P\\ \text{stratum }} }     \sum_{\xi \in P }   \sum_{\m \in S^{\nn_U^\beta} ( C ) }   \theta_{\nn ^\beta }^* \mathscr F ( \mathbf 1_{H(\m + \m_S, \beta )} ) ( \xi ) .
\end{align*}
All the results of \S 4.2 hold uniformly on each stratum $ P$, which means that 
we can pass from the decomposition of $\prod_{v\in C} \ZZZ_v^{\beta_v} ( \UU , \xi ) $ given by Proposition \ref{proposition:general-case-xi-neq-zero} to a decomposition of 
\[
\sum_{\xi \in V} \prod_{v\in C} \ZZZ_v^{\beta_v} ( \UU , \xi )  
\]
from which one deduces a decomposition of its coefficient
\[
\sum_{\xi \in P }   \sum_{\m \in S^{\nn_U^\beta} ( C ) }   \theta_{\nn ^\beta }^* \mathscr F ( \mathbf 1_{H(\m + \m_S, \beta )} ) ( \xi ) 
\]
for every $(\nn_U , \m_S) \in \NN^{\AAA_U} \times \NN^{\AAA_D \times S}$. Summing over the finite set of stratum $P$ and adding the term coming from the trivial character, we get a decomposition of $\left [   M_{\nn , \m_S}^\beta \right ]$. 

\subsubsection{Final result}
We combine the uniform convergence argument above and the results of Propositions \ref{proposition:general-case-Z-tilde-xi=0}, \ref{proposition:general-case-H-A} and \ref{proposition:general-case-xi-neq-zero}, fixing a choice of vertical components $\beta \in \prod_v \BU_v$.  Recall that the limit we obtained does not depend on the choice of the maps $(A, \beta ) \mapsto \mface_s$ for every $s\in S$. 

\begin{myptn}\label{ptn:final-result}
Let $\mface  \in \Cl_S^{\mathrm{max}} ( X , D )  $ be any choice of maximal faces over the places $s\in S$ and $\beta \in \prod_v \BU_v$ a choice of vertical components. When $\min ( m_{\alpha , s} ) $ becomes arbitrary large, 
the coefficients of 
\[ 
\ZZZ_\mface^\beta ( \UU )  = \LL^{(1-g)n} \sum_{\xi \in V } \ZZZ_\mface^\beta ( \UU , \xi ) , 
\]
normalised by $\LL^{\langle \rho ' , \mm \rangle}$,
converges in $\widehat{\mathscr M_k } $ to a non-zero effective element of $\widehat{\mathscr M_k}$ which can be written as the following motivic Euler product:
\begin{align*}
& \LL^{(1-g)n  }   
\left  ( \frac{ [ \Pic ^0 ( C ) ] \LL^{1-g}    }{ \LL - 1  } \right ) ^{ \rg (\Pic (U)) }\\  
& \times\prod_{v\in C_1 } \left ( 
 (1-\LL^{-1})^{\rg (\Pic (U))} \frac{[\UUU_v]}{\LL^{n}} 
 \right) 
\times \prod_{v\in C_0 \setminus C_1}   \left  ( (1-\LL^{-1})^{\rg (\Pic (U)) }  \LL^{\rho^{\beta_v}  } \frac{[ E_{\beta_v} ^\circ ]}{\LL^n }  \right ) \\
&  \times \prod_{s\in S}  \left (
  ( 1 - \LL^{-1} )^{\rg (\Pic (U)) + |\mface_s| } 
  \LL^{\rho^{\beta_s}   } \frac{  [  E_{\beta_s}^\circ \cap \DDD_{\mface_s } ] \LL ^{| \mface_s |} }{\LL^n} \right ).
\end{align*} 
Furthermore, there exists a finite partition $( P )$ of $V\setminus \{ 0 \}$ such that the normalised coefficients of $\ZZZ_{\mface} ^ \beta (\UU , \xi )$ tends to zero weight-linear-uniformly with respect to $\rho$ on each stratum $P$. 
\end{myptn}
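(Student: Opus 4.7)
The plan is to orchestrate the asymptotic analyses of Propositions \ref{proposition:general-case-Z-tilde-xi=0}, \ref{proposition:general-case-H-A} and \ref{proposition:general-case-xi-neq-zero} through a finite uniform stratification of the character space $V$. Starting from the decomposition
\[
\ZZZ_\mface^\beta(\UU) = \LL^{(1-g)n}\Bigl(\ZZZ_\mface^\beta(\UU,0) + \sum_{\xi \in V \setminus \{0\}} \ZZZ_\mface^\beta(\UU,\xi)\Bigr),
\]
the first step is to identify the unique contribution responsible for the limit. The trivial character piece splits as $\ZZZ_\mface^\beta(\UU,0) = \widetilde{\ZZZ_\mface^\beta}(\UU,0) + \HHH_\mface(\UU,0)\bigl(\prod_{v\in C_0}\FFF_v^\beta(\UU_0,0)\bigr)\bigl(\prod_{s\in S}\PPP_{\mface_s}^{\beta_s}(\UU_0,\UU_s)\bigr)$, and Proposition \ref{proposition:general-case-Z-tilde-xi=0} isolates the $\varepsilon = \mathbf 0$ summand of $\widetilde{\ZZZ_\mface^\beta}$ as the sole asymptotically surviving term. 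Evaluating its $\rho'$-weight-linear limit at $U_\alpha = \LL^{-\rho_\alpha'}$ and $U_{\alpha,s} = \LL^{-\rho_\alpha'}$, together with the explicit computation of $P_{\mface_s}^{\beta_s}$ at this point from Remark \ref{remark:maximal-faces-value-polynomial}, reproduces exactly the Euler product stated in the proposition. All other pieces, namely the $\varepsilon \neq \mathbf 0$ contributions of Proposition \ref{proposition:general-case-Z-tilde-xi=0} and the error terms of Proposition \ref{proposition:general-case-H-A}, are bounded in weight by quantities tending to $-\infty$ as soon as $\min(m_{\alpha,s}) \to \infty$.

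For the non-trivial characters, the plan is to refine the two finite constructible partitions of $V \setminus \{0\}$ already available: the one of Remark \ref{remark:uniform-convergence-finite-stratum}, on which $\xi \mapsto (d_\alpha(\xi))_{\alpha \in \AAA}$ is constant, and the one coming from \cite[Lemma 6.5.1]{chambert2016motivic}, on which the resolution of indeterminacies needed at each $s \in S$ is uniform. On each stratum $P$ of a common refinement, Proposition \ref{proposition:general-case-xi-neq-zero} provides a uniform bound of the form $w_V\bigl((\mathrm{coef}_\mm)\,\LL^{-\langle \rho', \mm\rangle_{\AAA_U \sqcup \mface}}\bigr) \leq \dim V - \delta_P\bigl(\langle \varepsilon, \mm\rangle_{\AAA_U^0(\xi)} + \langle \rho', \mm\rangle\bigr)$ for some $\delta_P > 0$. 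Since $\AAA_U \setminus \AAA_U^0(\xi)$ is non-empty whenever $\xi \neq 0$, this gives strict decay as $\min(m_{\alpha,s}) \to \infty$, locally uniformly on each stratum.

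The main obstacle, and in fact the only genuinely subtle step remaining, is to permute the motivic summation $\sum_{\xi \in V}$ with the limit in $\widehat{\mathscr M_k}$. The key tool is the commutative diagram recalled in \S\ref{paragraph:poisson-families}: the summation over a constructible stratum is a morphism $\ExpM_V \to \ExpM_k$ which shifts weights by at most $\dim V$. Because the stratification is finite and the weight bounds above are uniform on each stratum, these pointwise-in-$\xi$ inequalities transfer to a single inequality in $\widehat{\ExpM_k}$ whose right-hand side still tends to $-\infty$; thus $\LL^{(1-g)n}\sum_{\xi \neq 0} \ZZZ_\mface^\beta(\UU,\xi)$ is negligible, after normalisation by $\LL^{\langle \rho', \mm\rangle}$, in comparison with the main term from $\xi = 0$. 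Collecting everything, the normalised $\mm$-th coefficient of $\ZZZ_\mface^\beta(\UU)$ converges to the announced Euler product. Effectivity and non-vanishing of the limit follow from the factorisation itself: the local factors at $v \in C_0$ are non-zero thanks to the assumption $\UUU(\mathcal O_v) \neq \varnothing$ via Remark \ref{remark:interpretation-local-term-volume}, while at each $s \in S$ the class $[E_{\beta_s}^\circ \cap \DDD_{\mface_s}]$ is non-zero because $\mface_s$ was chosen so that $\Delta_s(\mface_s, \beta_s)$ carries an $F_s$-point.
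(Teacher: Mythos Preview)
Your proposal is correct and follows essentially the same route as the paper: you orchestrate Propositions \ref{proposition:general-case-Z-tilde-xi=0}, \ref{proposition:general-case-H-A} and \ref{proposition:general-case-xi-neq-zero} via the common refinement of the two stratifications of $V\setminus\{0\}$ (Remark \ref{remark:uniform-convergence-finite-stratum} and \cite[Lemma 6.5.1]{chambert2016motivic}), then use uniformity of the weight bounds on each stratum to permute the motivic summation $\sum_{\xi\in V}$ with the limit in $\widehat{\mathscr M_k}$. Your remark that the push-forward $\ExpM_V\to\ExpM_k$ shifts weights by at most $\dim V$ makes the permutation step slightly more explicit than in the paper, and your closing justification of non-vanishing (via $\UUU(\mathcal O_v)\neq\varnothing$ and the choice of $\mface_s$) is a reasonable unpacking of what in the paper is simply inherited from the statement of Proposition \ref{proposition:general-case-Z-tilde-xi=0}.
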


\begin{myremark}\label{remark:interpretation-local-term-volume-integral-points}
Over places $v\in C_0$,
as we pointed out in Remark \ref{remark:interpretation-local-term-volume}, the local term $\LL^{\rho^{\beta_v} - \langle \rho , \ee^{\beta_v} \rangle } \frac{[ E_{\beta_v} ^\circ ]}{\LL^n } $ can be interpreted as the motivic integral  
\[
\int_{G(F_v , \beta_v )} \LL^{-(g, \LLL_{\rho ' } ) } | \omega_X | =
\int_{\LLL ( \XXX_v , E_{\beta_v}^\circ ) } \LL^{-\ord_{\LLL_{\rho ' }} ( x  ) } \LL^{- \ord_{\omega_{X}} ( x  )  } .
\]
This remark can be adapted for places $s\in S$ if one replaces $E_{\beta_v}^\circ $ by $E_{\beta_v}^\circ  \cap \DDD_{\mface_s}$ and add a controling term. Let $\epsilon$ be a non-negative integer, $\epsilon_\alpha = \epsilon $ if $\alpha \in \mface_s$ and zero otherwise, and 
\[
\LLL_{\rho ' + \epsilon} = \LLL_{\rho ' } + \sum_{\alpha \in \AAA } \epsilon_\alpha \LLL_\alpha .
\]
Then one considers for $\epsilon \geqslant 1$
\begin{align*}
& \int_{\LLL ( \XXX_s , E_{\beta_s}^\circ \cap \DDD_{\mface_s}) } \LL^{-\ord_{\LLL_{\rho ' + \epsilon }} ( x  )  } \LL^{- \ord_{\omega_{X}} ( x  )  } \\ 
& =  \int_{\LLL ( \XXX_s , E_{\beta_s}^\circ \cap \DDD_{\mface_s} ) }  \LL^{-\sum_{\alpha \in \AAA_U \sqcup \mface_s} ( \rho_\alpha ' + \epsilon_\alpha ) \left ( \ord_{\DDD_\alpha} ( x ) + e_\alpha^{\beta_s} \right )} \LL^{\rho^{\beta_s} + \sum_{\alpha \in \AAA_U \sqcup \mface_s} \rho_\alpha  \ord_{\DDD_\alpha} ( x )} \\
 & = \LL^{\rho^{\beta_s} - \langle \rho ' - \epsilon , \mathbf e^\beta \rangle} \int_{\LLL ( \XXX_s , E_{\beta_s}^\circ \cap \DDD_{\mface_s} ) } \LL^{ \sum_{\alpha \in \mface_s} ( 1 - \epsilon_\alpha ) \ord_{\DDD_\alpha} (x  ) }  \\
 & = \LL^{\rho^{\beta_s} - \langle \rho ' - \epsilon , \mathbf e^\beta \rangle} \sum_{A\subset \AAA_U } \int_{ \Omega_s ( \mface_s \cup A , \beta )} \LL^{ \sum_{\alpha \in \mface_s}(1 - \epsilon_\alpha ) \ord_{\DDD_\alpha} (x  ) } .
\end{align*}
This last family of integrals can be computing using the isomorphism 
\[
 \Theta : 
\Delta_v ( A , \beta ) \times \LLL ( \mathbf A^1 , 0 ) ^A \times \LLL ( \mathbf A^1 , 0 )^{n-|A|}  \longrightarrow  \Omega_v ( A , \beta )
\]
introduced at the beginning of \S \ref{section:compactification-of-additive-groups}, together with the motivic volumes computed in \S \ref{subsubsection:motivic-volume}:
\begin{align*}
& \int_{ \Omega_s ( A \cup \mface_s  , \beta )} \LL^{ \sum_{\alpha \in \mface_s} ( 1 - \epsilon_\alpha ) \ord_{\DDD_\alpha} (x  ) }  \\
& = \sum_{\mm \in \NN_{>0}^{A\cup \mface_s}} \int_{\substack{ \Delta_s ( A  \cup \mface_s, \beta ) \times \LLL ( \mathbf A^1 , 0 ) ^{A \cup \mface_s} \times \LLL ( \mathbf A^1 , 0 )^{n-|A|-|\mface_s|} \\ \ord_s (x_\alpha ) = m_\alpha }  } \LL^{|(1 - \epsilon) \mm_{\mface_s} | }  \\
& = \sum_{\mm \in \NN_{>0}^{A\cup \mface_s}} [ \Delta_s ( A \cup \mface_s , \beta ) ] \LL^{ | ( 1 - \epsilon ) \mm_{\mface_s} | } ( 1 - \LL^{-1} )^{|A\cup \mface_s|} \LL^{-|\mm |} \LL^{-n+|A|+|\mface_s|} \\
& = [ \Delta_s ( A \cup \mface_s  , \beta ) ] \left ( \frac{\LL^{-\epsilon}}{1-\LL^{-\epsilon}} \right )^{|\mface_s |} \left ( \frac{\LL^{-1}}{1-\LL^{-1}} \right )^{|A|}  ( 1 - \LL^{-1} )^{|A\cup \mface_s|} \LL^{-n+|A|+|\mface_s|} \\
& = \left ( \frac{\LL^{-\epsilon}}{1-\LL^{-\epsilon}} \right )^{|\mface_s |}  ( 1 - \LL^{-1} )^{|\mface_s |} [ \Delta_s ( A \cup \mface_s , \beta ) ] \LL^{|\mface_s| - n } . 
\end{align*} 
This last equality shows that it makes sense to consider the value at $\epsilon = 0$ of 
\[
( 1 - \LL^{-\varepsilon})^{|\mface_s|} 
\int_{\LLL ( \XXX_s , E_{\beta_s}^\circ \cap \DDD_{\mface_s}) } \LL^{-\ord_{\LLL_{\rho ' + \epsilon }} ( x  )  } \LL^{- \ord_{\omega_{X}} ( x  )  } .
\] 
Since $[E_{\beta_v}^\circ  \cap \DDD_{\mface_s}] = \sum_{A\subset \AAA_U }  [ \Delta_s ( A \cup \mface_s , \beta ) ] $, this residue is precisely the local term 
\[
\LL^{\rho^{\beta_s} - \langle \rho ' , \mathbf e^\beta \rangle}  ( 1 - \LL^{-1} )^{|\mface_s |} [E_{\beta_v}^\circ  \cap \DDD_{\mface_s}]  \LL^{|\mface_s| - n } . 
\] 
\end{myremark}

\begin{myremark}\label{remark:moduli-space-curves-max-faces} 
A geometric interpretation of Proposition \ref{ptn:final-result} is as follows. 

For any ${ \mface \in \Cl_S^{\mathrm{max}} ( X , D ) }$, let $M_{\nn , \m_S}^{\beta , \mface }\subset  M_{\nn , \m_S}^\beta$ be the constructible moduli subspace of sections $\sigma \in M_{\nn , \m_S}^\beta$ such that 
\[
\forall s \in S \quad  \forall \alpha \in \mface_s \quad ( \sigma ( \eta_C ) , \DDD_\alpha )_s > 0   . 
\]
Remark that by definition of the Clemens complexes, $M_{\nn,  \m_S }^{\beta , \mface} $ is non-empty if and only if the support of $\m_s = \left ( ( \sigma ( \eta_C ) , \DDD_\alpha )_s \right )_{\alpha \in \AAA_D} \in \NN^{\AAA_D}$ is \textit{exactly} $\mface_s$ for all $s\in S$. The maximal number of $\mathscr D_\alpha$ a section can intersect above each point of $S$ is encoded in $\Cl_S^{\mathrm{max}} ( X , D ) $. 
This also means that when the multidegree of a section tends to infinity, there is an unique way to associate a maximal face to the section and 
$M_{\nn , \m_S}^{\beta , \mface }$ is actually the \textit{right} moduli space to consider. We showed that $[M_{\nn , \m_S}^{\beta , \mface }]\LL^{-\langle \nn_U^\beta + \m_S , \rho ' \rangle }$ converges to the constant of Proposition \ref{ptn:final-result}. Since $[M_{\nn , \m_S}^{\beta , \mface }]$ coincides with the coefficient of sufficiently high degree of the refined height zeta function $\ZZZ^\beta_\mface ( \UU ) $, this is the following statement and main result.
\end{myremark}


\begin{mythm}\label{thm:final-result}
When the multidegree $(\nn_U^\beta , \m_S ) \in \NN^{\AAA_U} \times \NN^{\mface}  $ goes arbitrary far from the boundaries of the corresponding cone, the normalised class
\[
[M_{\nn , \m_S}^{\beta , \mface }] \LL^{- \langle  \rho ', \nn \rangle } 
\]
converges, in the completion of $\mathscr M_k $ for the weight topology, to the motivic Euler product
\begin{align*}
& \LL^{(1-g)n  }   
\left  ( \frac{ [ \Pic ^0 ( C ) ] \LL^{1-g}    }{ \LL - 1  } \right ) ^{ \rg (\Pic (U)) }\\  
& \times\prod_{v\in C_1 } \left ( 
 (1-\LL^{-1})^{\rg (\Pic (U))} \frac{[\UUU_v]}{\LL^{n}} 
 \right) 
\times \prod_{v\in C_0 \setminus C_1}   \left  ( (1-\LL^{-1})^{\rg (\Pic (U)) }  \LL^{\rho^{\beta_v} - \langle  \rho ' , \ee^{\beta_v} \rangle  } \frac{[ E_{\beta_v} ^\circ ]}{\LL^n }  \right ) \\
&  \times \prod_{s\in S}  \left (
  ( 1 - \LL^{-1} )^{\rg (\Pic (U)) + |\mface_s| } 
  \LL^{\rho^{\beta_s}  - \langle  \rho ' , \ee^{\beta_s} \rangle  } \frac{  [  E_{\beta_s}^\circ \cap \DDD_{\mface_s } ] \LL ^{| \mface_s |} }{\LL^n} \right ).
\end{align*} 
\end{mythm}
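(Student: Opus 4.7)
The strategy is to reduce the theorem to Proposition \ref{ptn:final-result} by identifying $[M_{\nn,\m_S}^{\beta,\mface}]$ with a coefficient of the refined zeta function $\ZZZ_\mface^\beta(\UU)$ and by adjusting the normalisation.

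First, I would identify the moduli class with the correct coefficient. Using the decomposition
\[
\ZZZ^\beta(\UU) = \sum_{\mface\in\Cl_S^{\max}(X,D)} \ZZZ_\mface^\beta(\UU)
\]
established in \S 4.2 and Remark \ref{remark:moduli-space-curves-max-faces}, I would argue that for $(\nn_U^\beta,\m_S)$ whose support meets every $\alpha\in\mface_s$, the subvariety $M_{\nn,\m_S}^{\beta,\mface}$ coincides with the full stratum $M_{\nn,\m_S}^\beta$ (no smaller $\mface'\subsetneq\mface$ can arise), so $[M_{\nn,\m_S}^{\beta,\mface}]$ is exactly the $(\nn_U^\beta,\m_S)$-coefficient of $\ZZZ_\mface^\beta(\UU)$. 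This uses the decomposition of the integration domain according to which faces of the Clemens complex the section meets, and the fact that maximality of $\mface_s$ forces the pair $(A,\beta_s)\mapsto \mface_s$ uniquely once $A\cap\AAA_D=\mface_s$.

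Second, I would invoke Proposition \ref{ptn:final-result} directly. The hypothesis that the multidegree moves arbitrarily far from the boundary of the relevant cone translates to $\min_\alpha(m_{\alpha,s})\to\infty$ over all relevant coordinates, which is precisely the regime in which Proposition \ref{ptn:final-result} gives $\rho'$-weight-linear convergence of the normalised coefficients of $\ZZZ_\mface^\beta(\UU)=\LL^{(1-g)n}\sum_{\xi\in V}\ZZZ_\mface^\beta(\UU,\xi)$ towards the effective limit stated there. The commutation of the motivic sum over characters $\xi\in V$ with the passage to the weight-topology limit is the one genuinely delicate point: it is handled by the finite constructible stratification of $V\setminus\{0\}$ provided by Remark \ref{remark:uniform-convergence-finite-stratum} (refined so that $\xi\mapsto\AAA_U^0(\xi)$ and all $d_\alpha(\xi)$ are constant on each stratum), on which Propositions \ref{proposition:general-case-Z-tilde-xi=0}, \ref{proposition:general-case-H-A} and \ref{proposition:general-case-xi-neq-zero} give \emph{uniform} weight bounds; only the trivial character $\xi=0$ contributes asymptotically, because for $\xi\neq 0$ the complex $\Cl_s^{\max}(X,D)_0$ is a proper subcomplex of $\Cl_s^{\max}(X,D)$ by \cite[Lemma 3.5.4]{chambert2012integral}.

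Third, I would reconcile the two normalisations. Writing $\nn=\nn^\beta+\ee^\beta$ and $\ee^\beta=\sum_v\ee^{\beta_v}$, we have
\[
\LL^{-\langle\rho',\nn\rangle}=\LL^{-\langle\rho',\nn^\beta\rangle}\prod_v\LL^{-\langle\rho',\ee^{\beta_v}\rangle}.
\]
Distributing the $v$-wise factors into the corresponding local terms of the Euler product obtained from Proposition \ref{ptn:final-result}, the local factor at $v\in C_0\setminus C_1$ acquires exactly the prefactor $\LL^{-\langle\rho',\ee^{\beta_v}\rangle}$, and similarly the local factor at $s\in S$ acquires $\LL^{-\langle\rho',\ee^{\beta_s}\rangle}$. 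Combined with the volume interpretation of Remarks \ref{remark:interpretation-local-term-volume} and \ref{remark:interpretation-local-term-volume-integral-points}, this produces the Euler product written in the statement of the theorem.

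The main obstacle is the second step: controlling the motivic sum over $V$. The bookkeeping is subtle because one must argue at the level of classes in $\ExpM_V$ (using the commutative diagram of pushforwards $\ExpM_V\to\ExpM_k$ recalled at the end of \S\ref{subsection:back-to-our-setting}) so that the uniform weight estimates on each stratum $P$ transfer to the class of the sum after pushforward; everything else is a matter of reinterpreting Proposition \ref{ptn:final-result} geometrically using Remark \ref{remark:moduli-space-curves-max-faces} and tracking the shift $\ee^\beta$ in the normalisation.
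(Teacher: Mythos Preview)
Your proposal is correct and follows essentially the same approach as the paper: the theorem is deduced from Proposition~\ref{ptn:final-result} via Remark~\ref{remark:moduli-space-curves-max-faces}, which identifies $[M_{\nn,\m_S}^{\beta,\mface}]$ with the appropriate coefficient of $\ZZZ_\mface^\beta(\UU)$ once the support of each $\m_s$ is exactly $\mface_s$, together with the shift by $\ee^\beta$ in the normalisation coming from the prefactor $\UU^{\ee^\beta}$ in the definition of $\ZZZ(\UU)$. Your discussion of the uniform-convergence step over the stratification of $V$ is also what the paper does in \S\ref{subsection:uniform-convergence}.
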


\section*{Acknowledgements}
I am grateful to Emmanuel Peyre for his help, reading and useful comments throughout the drafting process of this article. I am also very indebted to Margaret Bilu for all the constructions and properties that I used in this paper and which are due to her, especially those concerning the motivic Euler product, as well as for enlightening discussions and remarks on an earlier version of this work. 
I thank the anonymous referee for his/her remarks and suggestions that helped me to enhance the clarity of the exposition. 



\end{document}